\documentclass{article}
\usepackage{graphicx} 
\usepackage{amsmath,amssymb,amsthm}
\usepackage{comment}
\usepackage{xcolor}
\usepackage{hyperref}

\newtheorem{thm}{Theorem}
\newtheorem{lem}[thm]{Lemma}
\newtheorem{prop}[thm]{Proposition}
\newtheorem{cor}[thm]{Corollary}
\theoremstyle{definition}
\newtheorem{dfn}[thm]{Definition}
\newtheorem{rmk}[thm]{Remark}
\newtheorem{eg}[thm]{Example}
 \numberwithin{thm}{section}
 \numberwithin{equation}{section}
 
\DeclareMathOperator\vol{vol} 
 
\DeclareMathOperator\Hess{Hess}

\def\R{\mathbb{R}}
\def\N{\mathbb{N}}
\def\d{\mathrm{d}}
\def\Z{\mathbb{Z}}

\newcommand{\ii}{\mathrm i}
\newcommand{\dd}{\mathrm d}
\newcommand{\Lie}{\mathcal L}
 
\newcommand{\ip}[2]{\langle #1,#2\rangle}
\newcommand{\norm}[1]{\lVert #1\rVert}

\title{Eigenforms and special holonomy}
\author{Bobby S.~Acharya \and Luc\'ia M.~Cabrera \and Simone Corbo \and Jason D.~Lotay}
\date{
}

\begin{document}

\maketitle

\begin{abstract}
We prove existence and non-existence results for $L^2$ eigenforms for the Hodge Laplacian on complete non-compact Ricci-flat manifolds with special holonomy, with a particular focus on gravitational instantons. We briefly describe the physical interpretation of these results in superstring and M-theory.   
\end{abstract}

\tableofcontents

\section{Introduction}\label{sec:intro}

The study of eigenvalues of the Hodge Laplacian acting on differential forms, and the associated eigenforms, is a classical topic in spectral geometry.  Whilst the spectrum of the Laplacian acting on functions has been extensively studied, the case of forms has received relatively little attention.  

Motivated both by Riemannian geometry and theoretical physics, in this article we are interested in $L^2$ eigenforms on complete non-compact Riemannian manifolds that have special holonomy.  There are many examples of such manifolds and they are of fundamental interest in the study of Einstein (and especially Ricci-flat) metrics, and in string theory and M-theory.    

Beyond the classical nature of the problem of understanding the $L^2$ spectrum from a mathematical perspective, $L^2$ eigenforms for the Hodge Laplacian with non-zero eigenvalue on certain special holonomy manifolds are interpreted physically as ``normalisable massive modes'', i.e.~they correspond to massive Kaluza--Klein particles/states.  Hence, understanding whether such eigenforms exist is of importance for understanding the physics of string theory or M-theory compactifications.

\subsection{Gravitational instantons}

Our main results concern \emph{gravitational instantons}.  

\begin{dfn}
    A \emph{gravitational instanton} is a complete, oriented, connected, non-compact Riemannian 4-manifold $(M^4,g)$ which is
    \begin{itemize}
        \item hyperk\"ahler, i.e.~the holonomy group of $g$ is contained in $\mathrm{SU}(2)$, and 
        \item has Riemann curvature in $L^2$.
    \end{itemize}
The first condition implies that the manifold is Ricci flat. The second condition implies that the Riemann curvature must decay at least quadratically at infinity with respect to distance from a fixed point.  
\end{dfn}

These 4-manifolds have received much attention in both the mathematics and theoretical physics communities (see e.g.~\cite{AcharyaJoyce,Barbosa,GibbonsHawking,Kronheimer,Minerbe2011}) and were recently classified \cite{SunZhang}.

\begin{rmk}\label{rmk:ALE-H}
The gravitational instantons for which the curvature decays faster than quadratically were first classified in \cite{Chens2,Chens1,Chens3}, building on work in  
\cite{Minerbe2010}.  
They split into 4 types: ALE, ALF, ALG and ALH. Each type corresponds to a volume growth of integer order decreasing from 4 to 1: more details are provided e.g.~in \cite{SunZhang}. 
\end{rmk}

\begin{rmk}\label{rmk:ALE}
    The ALE gravitational instantons were classified much earlier by Kronheimer \cite{Kronheimer} using a combination of hyperk\"ahler quotient and twistor methods and they are in one-to-one correspondence with the $ADE$ Lie algebras. In fact, an ALE hyperk\"ahler 4-manifold, $(M^4_{\Gamma_{ADE}}, g)$ is asymptotic to the flat 4-orbifold $\mathbb{R}^4/\Gamma_{ADE}$, where $\Gamma_{ADE}$ is a finite subgroup of $\mathrm{SU}(2)$ acting on $\mathbb{R}^4 \cong \mathbb{C}^2$ in the standard representation.
\end{rmk}

Some important examples of gravitational instantons include the following.
\begin{eg} The Euclidean metric on $\R^4$ and the Eguchi--Hanson metric on $T^*\mathcal{S}^2$ give ALE gravitational instantons, where the latter is asymptotic to $\R^4/\mathbb{Z}_2$.
\end{eg}

\begin{eg} The Taub--NUT metric on $\R^4$, the Atiyah--Hitchin metric on $\mathcal{S}^4\setminus\mathbb{RP}^2$ coming from viewing it as a monopole moduli space, and the double cover $\mathbb{CP}^2\setminus\mathbb{RP}^2$ of Atiyah--Hitchin give ALF gravitational instantons.
\end{eg}

\begin{eg} The Gibbons--Hawking ansatz provides all ALE and ALF gravitational instantons of $A_n$ type (also known as cyclic type), which includes all of the examples above except Atiyah--Hitchin and its double cover (which are $D_n$ type). These  ALE/ALF gravitational instantons are sometimes called multi-Eguchi--Hanson or multi-Taub--NUT respectively.
\end{eg}

\begin{rmk}\label{rmk:AL*}
If the curvature of a gravitational instanton does not decay faster than quadratically, then two more types of asymptotic behaviour  appear: ALG* and ALH*. These new types are, in some sense, intermediate behaviours between ALF and ALG, and ALG and ALH respectively, with volume growths of 2 (ALG*) and $4/3$ (ALH*). See e.g.~\cite[Section 6.4]{SunZhang} for detailed definitions. 
\end{rmk}

\subsection{Results}

To state our results it will be useful to introduce notation for the  $\lambda$-eigenspace for the Hodge Laplacian acting on $L^2$ $k$-forms on a complete Riemannian manifold $(M,g)$:
\begin{equation}
    \mathcal{E}^k(\lambda)=\{\alpha\in L^2\Omega^k(M,g)\,\colon \Delta\alpha=\lambda\alpha\}.
\end{equation}

Our first key result is the following.

\begin{thm}\label{thm:grav.inst}
    Let $(M^4,g)$ be a gravitational instanton and let $\lambda\in\R$, $\lambda\neq 0$.  
    \begin{itemize}
        \item[(a)] If $\mathcal{E}^0(\lambda)\neq 0$ then $\mathcal{E}^k(\lambda)\neq 0$ for all $k\in\{0,1,2,3,4\}$.
        \item[(b)] If $\mathcal{E}^0(\lambda)=0$ then $\mathcal{E}^k(\lambda)=0$ for all $k\in\{0,1,2,3,4\}$.
    \end{itemize}
\end{thm}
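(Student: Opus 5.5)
The plan is to transport eigenforms between degrees using the parallel hyperk\"ahler structure. On a hyperk\"ahler 4-manifold we have three parallel self-dual 2-forms $\omega_1,\omega_2,\omega_3$, and the anti-self-dual 2-forms form a bundle that is flat. Since the metric is Ricci-flat and these forms are parallel, wedging with $\omega_i$, contracting with $\omega_i$, and applying the Hodge star all commute with $\Delta$. They also preserve $L^2$, because they are pointwise bounded. By Hodge duality $\mathcal{E}^k(\lambda)\cong\mathcal{E}^{4-k}(\lambda)$, so it suffices to treat $k=0,1,2$.

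For (a), suppose $f\in\mathcal{E}^0(\lambda)$ with $\lambda\neq 0$ and $f\not\equiv 0$.
\begin{itemize}
\item[(i)] Since $\Delta$ commutes with $d$, we get $\Delta(df)=d\Delta f=\lambda\, df$.
\item[(ii)] The 1-form $df$ lies in $L^2$: integrating by parts, justified on a complete manifold by a cutoff argument with $f$ in the domain of the self-adjoint Laplacian, gives $\|df\|^2=\lambda\|f\|^2$.
\item[(iii)] $df\neq0$, since otherwise $f$ is constant, hence zero, being $L^2$ on an infinite-volume manifold (infinite volume holds by Yau's theorem for complete non-compact Ricci-flat manifolds).
\end{itemize}
For $k=2$, I take $f\omega_1\in\mathcal{E}^2(\lambda)$, which is nonzero, $L^2$ since $|\omega_1|$ is constant, and an eigenform because $\omega_1$ is parallel. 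Degrees $3$ and $4$ then follow by the Hodge star.

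For (b), I argue by contrapositive: given a nonzero $\alpha\in\mathcal{E}^k(\lambda)$ with $\lambda\neq0$, I produce a nonzero $L^2$ eigenfunction.
\begin{itemize}
\item[(1)] \emph{Degree $2$.} Decompose $\alpha=\alpha^++\alpha^-$; since $\Delta$ commutes with $*$, both parts are eigenforms. For the self-dual part, write $\alpha^+=\sum f_i\omega_i$ with $f_i=\tfrac12\langle\alpha^+,\omega_i\rangle$. Then $f_i\in L^2$, and $\Delta f_i=\lambda f_i$ because the $\omega_i$ are parallel, giving a nonzero eigenfunction. For the anti-self-dual part, the Weitzenb\"ock formula on $\Lambda^-$ involves the anti-self-dual Weyl curvature, which need not vanish, so I will not use it. Instead, since $\lambda\ne0$, write $\alpha^-=\lambda^{-1}(d d^*+d^*d)\alpha^-$ and note that $\beta=d^*\alpha^-$ is a nonzero $L^2$ 1-form eigenform. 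Nonvanishing holds because $\alpha^-$ is anti-self-dual, so $d^*\alpha^-=-{*}d\alpha^-$ and both vanish only if $\alpha^-$ is closed and coclosed, i.e.\ harmonic, contradicting $\lambda\neq0$.
\item[(2)] \emph{Degree $1$.} This reduces the problem to showing that a nonzero $\beta\in\mathcal{E}^1(\lambda)$ yields a nonzero eigenfunction. Using the hyperk\"ahler structure, identify $T^*M$ with the trivial quaternionic line bundle via $\beta\mapsto(\beta, I_1\beta,I_2\beta,I_3\beta)$. Then $d\beta$ has self-dual part $(d\beta)^+=\sum_i\langle d\beta,\omega_i\rangle\omega_i/2$, with $\langle d\beta,\omega_i\rangle=\pm d^*(I_i\beta)$ up to constants. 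So $d^*\beta, d^*(I_1\beta),d^*(I_2\beta),d^*(I_3\beta)$ are $L^2$ functions (as $\|d^*\beta\|\le\lambda^{1/2}\|\beta\|$), and they are eigenfunctions because $I_i$ is parallel and hence commutes with $\Delta$ on a Ricci-flat K\"ahler manifold.
\item[(3)] \emph{Vanishing case.} If all four of these functions vanish, then $\beta$ is coclosed and $(d\beta)^+=0$, so $d\beta$ is anti-self-dual. In that case $\Delta\beta=d^*d\beta$ corresponds to the operator $D^*D$ for the Dirac-type operator $D=d^+\oplus d^*$ acting on all four $I_i\beta$ simultaneously. Since each $I_i\beta$ is also coclosed with anti-self-dual differential, the quaternionic Dirac operator would satisfy $D\beta=0$ in the Cauchy--Riemann sense, forcing $\Delta\beta=0$. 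This contradicts $\lambda\neq0$.
\end{itemize}

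The main obstacle I expect is this last step: verifying the identities $\langle d\beta,\omega_i\rangle=\pm d^*(I_i\beta)$ precisely, and confirming that these functions cannot all vanish for a nonzero non-harmonic eigenform. I would first try to check this via the decomposition $\Lambda^1\otimes\mathbb{H}$ and the identification of $d^*\oplus d^+$ with the Dirac operator on $\Lambda^1\cong S^+\otimes S^-$, using that $S^+$ is flat and trivialised.
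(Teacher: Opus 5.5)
Your proposal is correct, and its architecture matches the paper's. Part (a) goes via parallel forms as in Proposition \ref{prop:functions.forms}. For (b), self-dual 2-forms reduce to functions through the $\omega_i$ (Lemma \ref{lem:hk.ALE.sd}), anti-self-dual 2-forms reduce to 1-forms through $\d^*$ (Lemma \ref{lem:hk.ALE.asd}), and 1-forms reduce to functions through $\d^*\beta$ and the components of $(\d\beta)^+$ (Proposition \ref{prop:hk.ALE.1forms}).

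The genuine difference is the step you flagged as the main obstacle, and it is easier than you fear. Once $\d^*\beta=0$ and $(\d\beta)^+=0$, the paper uses the global argument of Lemma \ref{lem:hk.asd.exact}: $\|\d\beta\|_{L^2}^2=-\int_M\d(\beta\wedge\d\beta)$, whose boundary term at infinity vanishes by Gaffney's Stokes theorem since $\beta\wedge\d\beta\in L^1$, so $\d\beta=0$. Your route needs only a pointwise identity. If $\d\beta$ is anti-self-dual then $\d^*\d\beta=-{*}\d{*}\d\beta={*}\d\d\beta=0$, hence $\Delta\beta=\d\d^*\beta+\d^*\d\beta=0$ and $\beta=\lambda^{-1}\Delta\beta=0$. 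Equivalently, $\Delta=2(\d^+)^*\d^++\d\d^*$ on 1-forms in dimension four. With your normalisation $D=\d^+\oplus\d^*$ one gets $D^*D=\tfrac12\d^*\d+\d\d^*$ rather than $\Delta$, but the factor is harmless.

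The quaternionic Dirac and Cauchy--Riemann language, and the simultaneous use of all four $I_i\beta$, are unnecessary, since only $\beta$ itself enters. Likewise the identity $\langle\d\beta,\omega_i\rangle=\pm\d^*(I_i\beta)$ does hold: it is a K\"ahler identity, valid because $I_i$ is parallel. You can bypass it by using $\langle\d\beta,\omega_i\rangle$ directly. These are $L^2$ eigenfunctions because $\d\beta\in L^2$ and $\omega_i$ is parallel, exactly as in Lemma \ref{lem:hk.ALE.sd}.

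Your opening remark that $\Lambda^-$ is flat is a slip. It is $\Lambda^+$ that is flat and trivialised, as you correctly use later, and the remark plays no role.

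The trade-off between the two routes is as follows. The paper's Stokes argument proves the stronger fact that an $L^2$ exact (anti-)self-dual 2-form with $L^2$ primitive vanishes. Your argument shows that the only global analytic input needed for (b) is the integrability $\d\alpha,\d^*\alpha\in L^2$ of Lemma \ref{lem:integrability.grav.inst}. You use this implicitly for $\d^*\alpha^-$ and $\d^*(I_i\beta)$, and you should cite it. The same pointwise observation, $\d{*}\d\alpha=\mu^{-1}\d(\d\alpha\wedge\eta)=0$, would also let one dispense with Lemma \ref{lem:exact.vanish} in its applications in Section \ref{sec:special.hol}.
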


Theorem \ref{thm:grav.inst}(a) is not surprising and quite elementary. It extends naturally to other situations where the Riemannian metric has reduced holonomy as follows.

\begin{prop}\label{prop:function.forms.intro}
Let $(M^n,g)$ be a complete Riemannian manifold such that whenever $f\in\mathcal{E}^0(\lambda)$ for $\lambda\neq 0$ we have $\d f\in L^2$.
\begin{itemize}
\item[(a)] Suppose the holonomy of $g$ is contained in $\mathrm{U}(n/2)$ if $n$ is even (i.e.~$M$ is K\"ahler) or in $\mathrm{G}_2$ if $n=7$ (i.e.~$M$ is a $\mathrm{G}_2$ manifold).  If $\mathcal{E}^0(\lambda)\neq 0$ for $\lambda\neq 0$ then $\mathcal{E}^k(\lambda)\neq 0$ for all $k\in\{0,\ldots,n\}$.  
\item[(b)] Suppose $n=8$ and the holonomy of $g$ is contained in $\mathrm{Spin}(7)$ (i.e.~$M$ is a $\mathrm{Spin}(7)$ manifold).  If $\mathcal{E}^0(\lambda)\neq 0$ for $\lambda\neq 0$ then $\mathcal{E}^k(\lambda)\neq 0$ for all $k\in\{0,1,3,4,5,7,8\}$.
\end{itemize}
\end{prop}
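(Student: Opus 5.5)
Given $0\neq f\in\mathcal{E}^0(\lambda)$ with $\lambda\neq0$, the plan is to manufacture eigenforms of every required degree by combining $f$ and $\d f$ with the parallel forms supplied by the holonomy reduction. Two facts drive this. First, on any Riemannian manifold $\Delta$ commutes with $\d$, with $\d^*$ and with the Hodge star $*$. Second, a parallel form $\phi$ satisfies $\nabla\phi=0$, and the Weitzenb\"ock formula $\Delta=\nabla^*\nabla+\mathcal{R}$, with $\mathcal{R}$ a curvature endomorphism, gives that wedging with $\phi$ commutes with $\Delta$. The latter follows from the commutation $\Delta(\phi\wedge\alpha)=\phi\wedge\Delta\alpha$ valid for parallel $\phi$; a clean way to see it is that the Lefschetz-type operator $L_\phi=\phi\wedge\cdot$ is parallel and built from the holonomy-invariant tensor $\phi$, so it commutes with $\nabla^*\nabla$ and with $\mathcal{R}$, since $\mathcal{R}$ is expressed through the curvature, which takes values in the holonomy algebra that preserves $\phi$.

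With these facts the degrees $0,1,n-1,n$ follow at once. Degree $0$ is $f$ itself. For degree $1$, $\d f\in L^2$ by hypothesis and $\Delta \d f=\d\Delta f=\lambda\d f$. It is nonzero, since otherwise $f$ would be constant and in $L^2$, hence $\lambda f=\Delta f=0$. Applying $*$, the forms $*f$ and $*\d f$ are $L^2$ eigenforms of degrees $n$ and $n-1$, because $*$ is an isometry.

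The intermediate degrees come from wedging with parallel forms, and here I will use that $|\phi|$ is constant, so wedging preserves $L^2$.

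\textbf{Kähler case.} Here the Kähler form $\omega$ is parallel. For even degree $2j$ I take $\omega^j f$. For odd degree $2j+1$ I take $\omega^j\wedge\d f$. Nonvanishing comes from the Lefschetz property: $\omega^j\wedge\cdot$ is injective on $k$-forms for $2j+k\le n/2$ pointwise, and the remaining degrees are covered by applying $*$. Concretely, $\omega^j$ is nonzero for $j\le n/2$. For a nonzero 1-form $\beta$, $\omega^j\wedge\beta\neq0$ when $2j+1\le n-1$, which holds since $\beta$ is primitive and $L^{j}$ is injective on primitive 1-forms for $j\le n/2-1$.

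\textbf{$\mathrm{G}_2$ case.} I use $\varphi$ and $\psi=*\varphi$. Degree $3$ is $f\varphi$ and degree $4$ is $f\psi$, which combined with $\d f$, $*\d f$, $*f$ and $f$ covers degrees $0,1,3,4,6,7$. Degrees $2$ and $5$ use $\d f\wedge\varphi$ (degree $4$, not needed) or better $*(\d f\wedge\psi)$. This is an $L^2$ eigenform of degree $2$, and it is nonzero because $\beta\mapsto\beta\wedge\psi$ is injective on 1-forms: the standard identity $*(\beta\wedge\psi)=\pm\beta^\sharp\lrcorner\varphi$ has nonzero norm. Its star $\d f\wedge\psi$ then gives degree $5$.

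\textbf{$\mathrm{Spin}(7)$ case.} Wedging with the parallel Cayley $4$-form $\Phi$ gives $f\Phi$ in degree $4$ and $\d f\wedge\Phi$ in degree $5$, which is nonzero since $\Lambda^1\to\Lambda^5$, $\beta\mapsto\beta\wedge\Phi$, is an isomorphism. Its star is in degree $3$. Together with degrees $0,1,7,8$ this gives the list. Degrees $2$ and $6$ are excluded because there is no invariant map from $\Lambda^0\oplus\Lambda^1$ into $\Lambda^2=\Lambda^2_7\oplus\Lambda^2_{21}$.

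The main obstacle is verifying rigorously that $\Delta$ commutes with $\phi\wedge\cdot$ for parallel $\phi$. I would do this via the representation-theoretic argument above; alternatively, one can check $\d(\phi\wedge\alpha)=\phi\wedge\d\alpha$ directly and handle $\d^*$ through the Kähler/$\mathrm{G}_2$ identities $[L_\phi,\d^*]=\pm(\text{first-order operator})$, whose contributions cancel in $\Delta$. The pointwise injectivity checks are routine linear algebra.
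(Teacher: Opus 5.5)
Your proposal is correct and takes essentially the same route as the paper: you wedge $f$ and $\d f$ with the parallel forms $\omega^j$, $\varphi$, $*\varphi$ or $\Phi$, and fill in the remaining degrees with the Hodge star. The differences are only in supporting details. Your Weitzenb\"ock/holonomy argument gives a uniform proof that wedging with a parallel form commutes with $\Delta$, where the paper uses the K\"ahler identities and, in the $\mathrm{G}_2$ and $\mathrm{Spin}(7)$ cases, argues via $\Delta=\nabla^*\nabla$ on 1-forms; and your pointwise injectivity checks make explicit the nonvanishing that the paper takes for granted.
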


\noindent The condition for $(M,g)$ to have $\d f\in L^2$ whenever $f\in\mathcal{E}^0(\lambda)$ for $\lambda\neq 0$ holds in many examples, as we demonstrate in Lemma \ref{lem:lambda.non.neg} and Lemma \ref{lem:integrability.grav.inst}.

\begin{rmk}
\begin{itemize}\item[]
\item Theorem \ref{thm:grav.inst}(b) contains more meaningful content and  physics predicts that it could fail for other special holonomy manifolds, namely Calabi--Yau 3 and 4-folds, $\mathrm{G}_2$ manifolds and $\mathrm{Spin}(7)$ manifolds.  In these settings we prove partial versions of Theorem \ref{thm:grav.inst}(b): see Lemmas \ref{lem:CY3.vanish}, \ref{lem:G2.vanish} and \ref{lem:Spin7.vanish}.     
\item Note that Theorem \ref{thm:grav.inst}(b) does not hold for $\lambda=0$, i.e.~for harmonic forms.  For example, any non-flat ALE gravitational instanton has $L^2$ harmonic 2-forms, but has no $L^2$ harmonic functions by the maximum principle.
\end{itemize}
\end{rmk}

Theorem \ref{thm:grav.inst} shows that to understand eigenforms on gravitational instantons it suffices to consider eigenfunctions. Proposition \ref{prop:function.forms.intro} and work in Section \ref{sec:special.hol} show that similar, though so far weaker, results  hold for Calabi--Yau 3-folds, $\mathrm{G}_2$ manifolds and Spin(7) manifolds.  

A particularly important class of non-compact complete Riemannian manifolds with special holonomy are those which are \emph{asymptotically conical} (AC): see Definition \ref{dfn:AC} for a formal definition.  There are many examples of such manifolds, particularly hyperk\"ahler 4-manifolds and Calabi--Yau manifolds, and they are well-studied: see e.g.~\cite{BryantSalamon, CandelasDeLaOssa, ConlonHein, FHN2, KL, Kronheimer, Lehmann, Lehmann2, Stenzel}. 
 Key mathematical reasons for the importance of AC special holonomy manifolds are that they have maximal volume growth (allowed by the Ricci-flat condition) and they give local models for how compact Einstein manifolds can degenerate in families to develop point singularities.  Physically, the conical singularities which appear are important in {\it geometric engineering of quantum field theories} 
\cite{Acharya, AtiyahWitten2003, IntriligatorMorrisonSeiberg1997, KatzKlemmVafa1997}. 
The results we prove here allow for rigorous statements in this physical framework.

\begin{prop}\label{prop:AC.E0}
 Let $(M,g)$ be an asymptotically conical Riemannian  manifold.  
Then we have $\mathcal{E}^0(\lambda)=0$ for all $\lambda\in\R$.   
\end{prop}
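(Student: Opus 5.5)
We need to show that an asymptotically conical manifold has no $L^2$ eigenfunctions of the Hodge Laplacian, for any real $\lambda$. The plan is to split into cases according to the sign of $\lambda$ and then handle the positive case, which is the real content, by ODE analysis on the cone end.

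\textbf{Cases $\lambda<0$ and $\lambda=0$.} Since $\Delta=\d^*\d\geq 0$ is essentially self-adjoint on a complete manifold, any $L^2$ solution of $\Delta f=\lambda f$ lies in the domain of the closure. Pairing with $f$ via a cutoff argument (Gaffney) gives $\d f\in L^2$ and $\norm{\d f}^2=\lambda\norm{f}^2$. This is the content I expect Lemma~\ref{lem:lambda.non.neg} to record.
\begin{itemize}
\item If $\lambda<0$, the identity forces $f=0$.
\item If $\lambda=0$, it forces $\d f=0$, so $f$ is constant. Since AC manifolds have infinite (Euclidean-type) volume, an $L^2$ constant is $0$.
\end{itemize}

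\textbf{Case $\lambda>0$: separation of variables.} Outside a compact set, $M$ is diffeomorphic to $(R,\infty)\times\Sigma$, with $g=\d r^2+r^2g_\Sigma+h$ and $|\nabla^j h|=O(r^{-\nu-j})$ for some $\nu>0$. I would argue in two steps.
\begin{enumerate}
\item First treat the exact cone. Expand $f=\sum_j f_j(r)\phi_j$ in eigenfunctions of $\Delta_\Sigma$, with eigenvalues $\mu_j$. Each coefficient satisfies a Bessel equation
\[
f_j''+\tfrac{n-1}{r}f_j'+\Bigl(\lambda-\tfrac{\mu_j}{r^2}\Bigr)f_j=0 .
\]
The solutions behave like $r^{-(n-1)/2}\cos(\sqrt\lambda\, r+c)$, and $\int^\infty |f_j|^2 r^{n-1}\,\d r$ diverges unless $f_j\equiv0$.
\item For the perturbed metric this expansion is not exact, so I would replace it with a Rellich/Kato-type unique continuation at infinity. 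The classical result (Rellich, Kato, and Agmon for asymptotically Euclidean settings; Mazzeo and Vasy for scattering or conic metrics) states that solutions of $(\Delta-\lambda)f=0$ with $\lambda>0$ satisfying $\liminf_{r\to\infty} r^{-1}\int_{B_r\setminus B_R}|f|^2=0$ must vanish near infinity.
\end{enumerate}

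\textbf{Kato step in detail.} To make the second step concrete, I would set
\[
F(r)=\int_{\{r\}\times\Sigma}\bigl(|\partial_r f|^2+\lambda|f|^2-r^{-2}|\nabla_\Sigma f|^2\bigr)\,\d\sigma .
\]
A computation using the equation and the decay of $h$ should show that $\frac{\d}{\d r}\bigl(r^{n-1}F\bigr)\geq -C r^{-1-\nu}\, r^{n-1}F$ up to controllable terms. This makes $r^{n-1}F$ eventually bounded below by a positive constant unless $f$ vanishes at infinity. That lower bound contradicts $f\in L^2$, because $f\in L^2$ together with elliptic estimates makes $\int^\infty r^{n-1}F\,\d r$ finite.

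Once $f$ vanishes on the end, unique continuation for the elliptic operator $\Delta-\lambda$ gives $f\equiv0$ on the connected manifold $M$.

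The main obstacle is the $\lambda>0$ case: controlling the error terms from $h$ in the Rellich identity, including the tangential $|\nabla_\Sigma f|^2$ term with its sign. Any $O(r^{-\nu})$ decay suffices there, by the standard Kato argument; if the computation gets messy I would simply cite Kato's theorem in its manifold form.
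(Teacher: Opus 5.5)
Your treatment of $\lambda\le 0$ is fine: it is Lemma \ref{lem:lambda.non.neg} together with infinite volume. The overall shape of your $\lambda>0$ argument is also in substance the paper's proof: a Rellich--Kato type unique continuation at infinity, followed by ordinary unique continuation. The paper simply cites Donnelly's exhaustion-function criterion for absence of $L^2$ eigenfunctions \cite[Corollary 5.3]{Donnelly}. The required properties are supplied by the radius function of Lemma \ref{lem:radius}, namely $|\nabla\rho|\to 1$ and $\Hess\rho^2\to 2g$ at rate $O(\rho^{\nu})$. That route treats all $\lambda$ at once and covers metric perturbations of any rate, including long-range ones. The classical Kato theorem is for $o(1/r)$ potentials and does not apply directly to second-order metric perturbations. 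So your remark that any decay suffices ``by the standard Kato argument'' really needs such a reference.

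The part you present as the actual argument has a genuine gap.

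\textbf{The inequality is false as stated.} Your $F$ is not Kato's functional. Read $\d\sigma$ as the $g_\Sigma$-measure, which is the reading under which $f\in L^2$ gives $\int^\infty r^{n-1}F\,\d r<\infty$. On $\R^3$ with $f=\sin(\sqrt\lambda\, r)/r$ one gets $r^2F=4\pi\bigl(\lambda-\sqrt\lambda\,\sin(2\sqrt\lambda\, r)/r+O(r^{-2})\bigr)$. Its derivative is $-8\pi\lambda/r+O(r^{-2})$ whenever $\cos(2\sqrt\lambda\, r)=1$. So your inequality, which was supposed to follow from the equation alone, already fails with $h=0$.

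This part is fixable. Set $v=r^{(n-1)/2}f$ and use
$\int_\Sigma\bigl(|\partial_r v|^2+\lambda|v|^2-r^{-2}|\nabla_\Sigma v|^2-c_nr^{-2}|v|^2\bigr)\,\d\sigma$ with $c_n=(n-1)(n-3)/4$. On the exact cone its derivative is $2r^{-3}\int_\Sigma(|\nabla_\Sigma v|^2+c_n|v|^2)\,\d\sigma$.

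\textbf{The missing second stage.} This is the more serious problem. Monotonicity yields a positive lower bound only if the functional is positive at some large radius. For an $L^2$ solution it only shows that the functional is non-positive for all large $r$. Your phrase ``unless $f$ vanishes at infinity'' hides exactly the step of excluding that case. In Kato's proof, and in the AHK argument the paper adapts in Section \ref{sec:funct.ALF}, this needs a second stage. One first bootstraps to super-polynomial (or super-exponential) decay using weighted functionals. One then proves a weighted estimate forcing vanishing near infinity.

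So as it stands, your $\lambda>0$ case rests entirely on the citation. To close it, name a specific theorem valid for AC metrics and verify its hypotheses; this is precisely the paper's one-line proof. The Bessel analysis on the exact cone can be dropped, since it does not transfer to the perturbed metric.
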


\begin{rmk}
As is well-known, and as we shall see explicitly in Corollary \ref{cor:ALF}, Proposition \ref{prop:AC.E0} can fail for complete non-compact Riemannian $n$-manifolds which have volume growth less than that of $\R^n$.
\end{rmk}

Putting Theorem \ref{thm:grav.inst} and Proposition \ref{prop:AC.E0} together yields the following.

\begin{cor}\label{cor:ALE}
    Let $(M^4,g)$ be an ALE gravitational instanton.  Then $\mathcal{E}^k(\lambda)=0$ for all $\lambda\neq 0$ and for all $k$.
\end{cor}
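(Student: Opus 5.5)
The plan is to combine Theorem \ref{thm:grav.inst}(b) with Proposition \ref{prop:AC.E0}, treating the latter as the only analytic input.

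First I would check that an ALE gravitational instanton is asymptotically conical in the sense of Definition \ref{dfn:AC}. By Remark \ref{rmk:ALE}, $(M^4,g)$ is asymptotic to the flat cone $\R^4/\Gamma_{ADE}=C(\mathcal{S}^3/\Gamma_{ADE})$. The link $\mathcal{S}^3/\Gamma_{ADE}$ is smooth because $\Gamma_{ADE}\subset\mathrm{SU}(2)$ acts freely on $\mathcal{S}^3$. Kronheimer's construction (or the Chen--Chen analysis cited in Remark \ref{rmk:ALE-H}) gives a diffeomorphism outside a compact set under which $g$ differs from the cone metric by $O(r^{-4})$, with corresponding decay of all derivatives. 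This is more than enough for any reasonable AC decay rate $\nu<0$. So $(M,g)$ is AC.

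Next, Proposition \ref{prop:AC.E0} then gives $\mathcal{E}^0(\lambda)=0$ for every $\lambda\in\R$, and in particular for every $\lambda\neq0$.

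Finally, since $(M^4,g)$ is a gravitational instanton, Theorem \ref{thm:grav.inst}(b) applies for each $\lambda\neq0$. It yields $\mathcal{E}^k(\lambda)=0$ for all $k\in\{0,\dots,4\}$, which is the claim.

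The only point requiring care is the first step: the decay rate in the ALE definition must match, with the required number of derivatives, the rate demanded by Definition \ref{dfn:AC}. If that definition asks only for some rate $\nu<0$, the $O(r^{-4})$ asymptotics suffice. Everything else is a direct citation.
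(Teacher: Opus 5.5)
Your proposal is correct and follows the paper's own route: ALE gravitational instantons are AC (Example \ref{eg:ALE}), so Proposition \ref{prop:AC.E0} gives $\mathcal{E}^0(\lambda)=0$, and Theorem \ref{thm:grav.inst}(b) then gives $\mathcal{E}^k(\lambda)=0$ in every degree. The paper also remarks that the corollary follows directly from Proposition \ref{prop:AC.Ek}, without using the hyperk\"ahler reduction to eigenfunctions.
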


ALF gravitational instantons have received attention in mathematics and theoretical physics through connections to monopole moduli spaces  \cite{AtiyahHitchin} and gluing constructions for Ricci-flat metrics \cite{Foscolo,SchroersSinger}.  In particular, the spectrum of the Laplacian on ALF gravitational instantons is of special physical interest \cite{Schroers1,GibbonsManton,Schroers2}. We therefore prove the following result in the ALF setting, which extends work in \cite{Schroers2}.

\begin{thm}\label{thm:An.ALF}
Let $(M^4,g)$ be an $A_n$ type ALF (or multi-Taub--NUT) gravitational instanton.  Then we have $\mathcal{E}^0(\lambda)=0$ for all $\lambda\in\R$.
\end{thm}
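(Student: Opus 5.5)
Our approach is to write the multi-Taub--NUT metric via the Gibbons--Hawking ansatz,
\[
g = V\,(\d x_1^2+\d x_2^2+\d x_3^2) + V^{-1}\theta^2,\qquad V = 1+\sum_{i=1}^{n+1}\frac{1}{2|x-p_i|},
\]
where $\pi\colon M\to\R^3$ is the quotient by the triholomorphic circle action generated by the Killing field $\partial_t$, and $\d\theta=*_{\R^3}\d V$. In these coordinates the Laplacian on functions is
\[
\Delta f = -V^{-1}\Big(\Delta_{\R^3} f + V^2\,\partial_t^2 f\Big),
\]
where $\Delta_{\R^3}$ is the flat Euclidean Laplacian acting along horizontal lifts. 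This formula holds because $\sqrt{\det g}$ is constant in these coordinates and the horizontal and vertical parts decouple.

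Since $\partial_t$ is Killing, $\Delta$ commutes with $\partial_t$. We therefore decompose an eigenfunction $f\in\mathcal{E}^0(\lambda)$ into Fourier modes $f=\sum_{m} f_m$, where each $f_m$ has $\partial_t f_m = \ii m f_m$. Each $f_m$ is again an $L^2$ eigenfunction, so it suffices to treat a single mode. First, $\lambda\ge 0$ by self-adjointness and nonnegativity; here we invoke Lemma \ref{lem:lambda.non.neg}/Lemma \ref{lem:integrability.grav.inst} to justify the integration by parts. The case $\lambda=0$ follows from the maximum principle or Yau's Liouville theorem for $L^2$ harmonic functions on complete Ricci-flat manifolds: such an $f$ is constant, and the volume is infinite, so $f=0$. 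Hence we may assume $\lambda>0$.

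For the invariant mode $m=0$, $f_0$ descends to a function $u$ on $\R^3$ satisfying
\[
-\Delta_{\R^3} u = \lambda V u, \qquad \int_{\R^3} |u|^2 V\,\d x<\infty,
\]
since the volume form is $V\,\d x\wedge\theta$ up to a constant. Because $V\ge 1$, $u$ lies in $L^2(\R^3)$ and solves a Schrödinger-type equation $\Delta_{\R^3} u + \lambda V u = 0$. Near infinity this reads $\Delta u + \lambda u + \lambda (V-1) u=0$ with $V-1=O(1/r)$. This is a Coulomb-type perturbation of a positive energy level, so Kato's theorem (absence of embedded positive eigenvalues for potentials $o(1/r)$ in the appropriate sense, or the Kato--Agmon--Simon theorem, which covers $O(1/r)$ potentials with $\partial_r$-decay, as for Coulomb) gives $u\equiv 0$ outside a compact set. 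The singularities of $V$ at $p_i$ are $L^{3/2}_{loc}$, so unique continuation (for example Jerison--Kenig, or smoothness on $M$ itself) then forces $u\equiv 0$. Alternatively, one can run a Rellich-type argument directly on $M$ using the ALF asymptotics.

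For $m\neq 0$, $f_m$ is a section of the line bundle $L^m$ over $\R^3\setminus\{p_i\}$ with connection $A=m\theta$, and the equation becomes
\[
-\Delta_A u + m^2 V^2 u = \lambda V u .
\]
Multiplying by $\bar u$ and integrating, the potential $m^2V^2-\lambda V$ is bounded below by $m^2-\lambda+O(1/r)$ at infinity. When $m^2>\lambda$, the mode is eventually in a classically forbidden region, but that alone does not exclude eigenvalues. The cleaner route is a Kato-type argument at infinity for the magnetic Schrödinger operator: write the equation as $-\Delta_A u + W u = E u$ with
\[
E=\lambda-m^2\cdot 1 \quad\text{shifted appropriately},
\]
where, expanding $V^2=1+O(1/r)$, the effective energy at infinity is $\lambda-m^2$ and $W=O(1/r)$ with a Dirac-monopole magnetic field $O(1/r^2)$. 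If $\lambda-m^2>0$, the magnetic version of Kato's theorem excludes eigenvalues. If $\lambda-m^2\le 0$, the equation could in principle have bound states, since there is an attractive Coulomb term $-(\lambda-2m^2\cdot\tfrac{(n+1)}{2}\cdots)/r$. Here we plan a positivity argument: by the Bogomolny-type identity $|\nabla_A u|^2\ge |m|\,|\d V|\,|u|^2$-ish, or more concretely by integrating $\int |\nabla_A u|^2 + m^2V^2|u|^2 - \lambda V|u|^2$, we aim to show that the quadratic form $\int |\nabla_A u|^2 + m^2 V^2|u|^2$ dominates $\lambda\int V|u|^2$.

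We expect this $m\neq0$, $\lambda\le m^2$ regime to be the main obstacle, and it is where the extension of \cite{Schroers2} is needed. The first thing we would try is the identity $\int_M |\d f|^2 = \int (|\nabla_A u|^2 + m^2 V^2 |u|^2)\,V^{-1}\cdot V\,\d x$, combined with a virial/Pohozaev identity from the dilation vector field on $\R^3$, which is conformal for the flat part. The hope is that the homogeneity of $V-1$, of degree $-1$, produces a sign contradiction, exactly as in the proof that the hydrogen atom has no positive eigenvalues, while the negative spectrum is ruled out globally by $\Delta\ge0$ on $M$.
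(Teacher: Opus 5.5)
Your outline follows the paper's: Fourier modes along the circle action; reduction of mode $m$ to $-\Delta_Au+Wu=(\lambda-m^2)u$ for a section $u$ of $L^m\to\R^3\setminus\{p_i\}$, with $W=(2m^2-\lambda)h+m^2h^2$ and $h=V-1$; positivity when $\lambda\le m^2$; absence of embedded eigenvalues when $\lambda>m^2$; then unique continuation.

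The genuine gap is the case $m\neq0$, $\lambda>m^2$, which you settle in one sentence by appeal to ``the magnetic version of Kato's theorem''. Essentially all of the work lies here, and no off-the-shelf result applies. The available theorems (e.g.\ \cite{AHK}) treat $(-\ii\nabla-A)^2+W$ with a \emph{globally defined} vector potential on $\R^3$ or an exterior domain, and their proofs rely on the Poincar\'e gauge and the scalar dilation group. Here $L^m$ has degree $\pm m(n+1)\neq0$ on every large sphere, so no global gauge exists on any exterior domain. A Dirac-string gauge does not help, since it makes $A$ singular along a ray to infinity. The argument therefore has to be rebuilt for sections, as the paper does in Theorem~\ref{thm:An.vanish}. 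Radial parallel transport gives a gauge with $A(X)=0$ and a unitary dilation group $U_t$ on sections. The covariance $P_mU_t=e^tU_tP_m+U_tG^m_t$, with $G^m_t/t\to m\,\iota_X\d\eta$, replaces the Poincar\'e-gauge magnetic term by $m\,\iota_X\d\eta$ in the virial identity and in its exponentially weighted versions, which also need inner and outer cutoffs. Finally one checks the decay inputs $|x\times\nabla V|=O(|x|^{-2})$, $W=O(|x|^{-1})$ and $XW=O(|x|^{-1})$. Without this, the positive-energy magnetic modes are simply not treated.

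Conversely, the regime you single out as the main obstacle, $m\neq0$ and $\lambda\le m^2$, is a one-line consequence of the positivity you propose. Your worry about bound states there comes from a sign error. Since $\d f_m\in L^2$ (Lemma~\ref{lem:integrability.grav.inst}), pairing the equation with $u$ gives $\int|\nabla_Au|^2+\int V(m^2V-\lambda)|u|^2=0$. Because $V>1$, we have $m^2V-\lambda\ge m^2(V-1)>0$ pointwise, so $u=0$; no Bogomolny-type bound or virial identity is needed. The $1/r$ part of $W$ has coefficient $(2m^2-\lambda)\tfrac{n+1}{2}\ge m^2\tfrac{n+1}{2}>0$ in this regime, so it is repulsive. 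It becomes attractive only for $\lambda>2m^2$, which lies inside the positive-energy case. Your treatment of $\lambda=0$ (Yau's $L^2$ Liouville theorem plus infinite volume) is fine. So is your treatment of $m=0$ by Coulomb-type absence of positive eigenvalues, which is the non-magnetic special case of the same positive-energy argument.
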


Combining Theorem \ref{thm:An.ALF} with results from \cite{Schroers3,GibbonsManton} on eigenfunctions on the Atiyah--Hitchin manifold yields another corollary to Theorem \ref{thm:grav.inst}.

\begin{cor}\label{cor:ALF}
\begin{itemize}\item[]
    \item[(a)] On $A_n$ ALF/multi-Taub--NUT gravitational instantons we have $\mathcal{E}^k(\lambda)=0$ for all $\lambda\neq 0$ and for all $k$.
    \item[(b)] On Atiyah--Hitchin and its double cover there exist infinitely many $\lambda\in \R$, $\lambda\neq 0$, such that $\mathcal{E}^k(\lambda)\neq 0$ for all $k\in\{0,1,2,3,4\}$.
\end{itemize}
\end{cor}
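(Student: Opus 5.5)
Part (a) follows from Theorem \ref{thm:An.ALF} combined with Theorem \ref{thm:grav.inst}(b). By Theorem \ref{thm:An.ALF}, an $A_n$ ALF (multi-Taub--NUT) gravitational instanton satisfies $\mathcal{E}^0(\lambda)=0$ for every $\lambda\in\R$, in particular for every $\lambda\neq 0$. Theorem \ref{thm:grav.inst}(b) then gives $\mathcal{E}^k(\lambda)=0$ for all $k\in\{0,\dots,4\}$ and all $\lambda\neq0$. This step is purely formal once the two theorems are available.

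For (b) I will use Theorem \ref{thm:grav.inst}(a), which reduces everything to $L^2$ eigenfunctions. The Atiyah--Hitchin manifold and its double cover are ALF gravitational instantons of $D_n$ type. The required input is the existence of infinitely many nonzero eigenvalues carrying $L^2$ eigenfunctions, and this is supplied by \cite{Schroers3,GibbonsManton}. There the Laplacian on functions is analysed via the $\mathrm{SO}(3)$ symmetry: they separate variables and reduce to radial ODEs. Asymptotically the metric looks like $\R^3\times S^1$ with a \emph{negative mass} Taub--NUT correction, and this produces an attractive Coulomb-type potential, of order $-c/r$, in the radial equation for suitable charge sectors. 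Such a potential yields infinitely many bound states below the bottom of the essential spectrum, accumulating at the threshold, in the manner of hydrogen. Each bound state gives some $\lambda\neq 0$ with $\mathcal{E}^0(\lambda)\neq0$. For the double cover I would either cite the same sources directly or pull back eigenfunctions, since the pullback of an $L^2$ eigenfunction under a finite Riemannian covering is again an $L^2$ eigenfunction with the same eigenvalue.

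Theorem \ref{thm:grav.inst}(a) then gives $\mathcal{E}^k(\lambda)\neq0$ for all $k$ and each of these infinitely many $\lambda$. I expect the explicit construction to be: $\d f$ for $k=1$; the forms $\omega_i\wedge f$ or $\d(I_i\d f)$ built from the hyperk\"ahler triple for $k=2$; and Hodge duals for $k=3,4$. This needs $\d f\in L^2$ for an $L^2$ eigenfunction $f$, which I take to be provided by Lemma \ref{lem:integrability.grav.inst}.

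The main obstacle is the existence input in (b), namely establishing rigorously that the bound states are genuinely $L^2$ with nonzero eigenvalue. I will rely on the cited works for this rather than reprove it. If a check is needed, I would verify that the effective one-dimensional potential has a $-c/r$ tail with $c>0$ in some sector and invoke standard Sturm--Liouville oscillation results for infinitely many eigenvalues below threshold.
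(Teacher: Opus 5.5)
Your proposal is correct and follows the paper's argument: (a) is Theorem \ref{thm:An.ALF} combined with Theorem \ref{thm:grav.inst}(b), and (b) takes the infinitely many nonzero $L^2$ eigenvalues on Atiyah--Hitchin and its double cover from \cite{Schroers3,GibbonsManton} and feeds them into Theorem \ref{thm:grav.inst}(a), i.e.\ Proposition \ref{prop:functions.forms}(a) with $\d f\in L^2$ supplied by Lemma \ref{lem:integrability.grav.inst}. Your pull-back remark for the double cover is valid but not needed, since the paper cites the same sources for both manifolds.
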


\begin{rmk}
    It would be interesting to investigate whether or not Corollary \ref{cor:ALF}(b) extends to other $D_n$ type ALF gravitational instantons.
\end{rmk}

We now note that, by using methods from the scattering calculus  \cite{Melrose}, we can obtain non-existence of eigenforms in the general asymptotically conical setting using \cite{CNTV} (see also \cite[p.~78]{Melrose} for a relevant statement but without proof).  In particular, this result applies to AC special holonomy manifolds.

\begin{prop}\label{prop:AC.Ek}
    Let $(M,g)$ be an asymptotically conical Riemannian manifold.  Then $\mathcal{E}^k(\lambda)=0$ for all $\lambda\neq 0$ and all $k$.
\end{prop}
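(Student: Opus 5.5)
The plan is to reduce Proposition \ref{prop:AC.Ek} to an absence-of-embedded-eigenvalues result from scattering theory, as the preceding paragraph suggests. Negative eigenvalues are excluded directly. If $\alpha\in L^2$ with $\Delta\alpha=\lambda\alpha$, then elliptic regularity gives $\alpha$ smooth. Completeness and the usual Gaffney cut-off argument then yield $\d\alpha,\d^*\alpha\in L^2$ and
\[
\lambda\|\alpha\|^2=\|\d\alpha\|^2+\|\d^*\alpha\|^2\geq 0 ,
\]
so $\lambda<0$ forces $\alpha=0$. We are left with $\lambda>0$, which is the genuinely difficult case: such $\lambda$ lie inside the continuous spectrum, so the task is to rule out embedded eigenvalues.

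For $\lambda>0$, the first step is to compactify $M$ radially to a manifold with boundary $\overline M$, with boundary defining function $x=1/r$ near infinity, where $r$ is the radial coordinate of the cone. In the AC setting (Definition \ref{dfn:AC}) the metric is $g=\d r^2+r^2g_\Sigma+h$, with $h$ decaying at some rate $r^{-\nu}$ together with its derivatives. In terms of $x$, $g$ is a scattering metric in Melrose's sense, up to perturbation terms that are $O(x^{\nu})$ in the scattering calculus. Writing $k$-forms on the end in the form $\alpha=\d r\wedge\beta+\gamma$ with scattering-rescaled components, the Hodge Laplacian becomes a scattering differential operator on the scattering form bundle ${}^{sc}\Lambda^k$. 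Its principal symbol is scalar, $|\xi|^2\,\mathrm{Id}$, and its normal operator at the boundary is the Euclidean-type Laplacian on the exact cone.

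The second step is to invoke the theorem of \cite{CNTV}, which works with bundle-valued operators $P=\nabla^*\nabla+V$ on scattering manifolds, where $V$ is a potential decaying at infinity. By the Weitzenböck formula, $\Delta$ on $k$-forms has exactly this form, with $V$ a curvature term. Because the curvature of an AC metric decays like $r^{-2}$, $V$ is a short-range (order $x^2$) scattering potential. Under these hypotheses the result gives: any $L^2$ solution of $(P-\lambda)u=0$ with $\lambda>0$ is rapidly decreasing at $\partial\overline M$, i.e.\ $u\in\dot C^\infty(\overline M)$. This is a Froese--Herbst/Isozaki-type Mourre commutator argument using the radial vector field $x^2\partial_x$. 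I will need to check that the hypotheses of \cite{CNTV} (the form of the metric and the decay of the perturbations) are satisfied for a general AC metric of the type in Definition \ref{dfn:AC}. This is the step I expect to be the main obstacle, especially if the decay rate $\nu$ is small and the metric is only polyhomogeneous rather than smooth at $x=0$. If necessary, I would first apply a diffeomorphism to put $g$ in scattering normal form.

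The final step is unique continuation at infinity. Once $\alpha$ vanishes to infinite order at $x=0$, a Carleman estimate at the boundary for the scalar-principal-symbol operator $\Delta-\lambda$ forces $\alpha\equiv 0$ on a neighbourhood of infinity, and this is also part of the package in \cite{CNTV}, cf.\ \cite[p.~78]{Melrose}. Since $\Delta$ has scalar principal symbol, the interior weak unique continuation property (Aronszajn) then gives $\alpha\equiv0$ on all of the connected manifold $M$. Hence $\mathcal{E}^k(\lambda)=0$ for every $\lambda\neq0$ and every $k$.
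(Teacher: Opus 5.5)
Your proposal is correct and follows essentially the same route as the paper: negative eigenvalues are ruled out by the cut-off integration-by-parts argument of Lemma \ref{lem:lambda.non.neg}, and for $\lambda>0$ one writes $\Delta=\nabla^*\nabla+\mathcal{R}$ with $\mathcal{R}$ a decaying curvature term, invokes \cite[Corollary 5.5]{CNTV} to get that $\alpha$ vanishes outside a compact set, and concludes by unique continuation. The hypothesis check you flag as the main obstacle is handled in the paper simply by noting that AC metrics are scattering metrics, and that in an orthonormal frame of $k$-forms on the end the rough Laplacian differs from the scalar rough Laplacian on the coefficients by an operator of order at most one with decaying coefficients.
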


\begin{rmk}
  Corollary \ref{cor:ALE} follows directly from Proposition \ref{prop:AC.Ek} without requiring Theorem \ref{thm:grav.inst} (since ALE manifolds are AC), but we feel there is still value in seeing how the hyperk\"ahler condition reduces the eigenform problem to the eigenfunction setting.      
\end{rmk}

\begin{rmk}
There are important examples of special holonomy manifolds with maximal volume growth which are \emph{not} asymptotically conical, especially in the Calabi--Yau case \cite{ConlonRochon,Li,Sz}.  It would be interesting to know whether or not Proposition \ref{prop:AC.Ek} extends to these examples, since the methods of \cite{Melrose} do not apply.
\end{rmk}

\begin{rmk}
    Asymptotically cylindrical Riemannian manifolds with special holonomy have also received significant attention, e.g.~\cite{HHN,Kovalev,KovNor}.  In contrast to Propositions \ref{prop:AC.E0} and \ref{prop:AC.Ek}, one expects to have $L^2$ eigenfunctions with non-zero eigenvalue and thus, by Proposition \ref{prop:function.forms.intro}, $L^2$ eigenforms in many degrees on such manifolds.
\end{rmk}

\subsection{Summary}

We now briefly summarise the contents of the article.

In Section \ref{sec:dfns} we provide the basic definitions and notation that shall be used throughout.  In particular, we define various important classes of complete non-compact Riemannian manifolds and give  examples with special holonomy.

In Section \ref{sec:fundamental} we give some fundamental regularity and integrability results for $L^2$ eigenforms, which in particular show that their eigenvalues must be non-negative.  We interpret this latter result in terms of stability of special holonomy metrics.

In Section \ref{sec:functions} we focus on $L^2$ eigenfunctions.  We first prove the existence result Proposition \ref{prop:function.forms.intro}, showing how to obtain $L^2$ eigenforms from $L^2$ eigenfunctions in the presence of reduced holonomy using parallel forms.  We then show in the asymptotically conical setting that the non-existence result Proposition \ref{prop:AC.E0},  and its extension (Proposition \ref{prop:AC.Ek}) to eigenforms, follow quickly from known results in the literature.

In Section \ref{sec:grav.inst} we prove Theorem \ref{thm:grav.inst}(b), i.e.~that on gravitational instantons $(M^4,g)$ the non-existence of $L^2$ eigenfunctions implies the non-existence of $L^2$ eigenforms of any degree.  This is achieved through elementary arguments, such as the splitting of 2-forms on $(M,g)$ into self-dual and anti-self-dual 2-forms, and the triviality of the bundle of self-dual 2-forms on $(M,g)$.

Section \ref{sec:funct.ALF} is the most technical part of the article, where we prove Theorem \ref{thm:An.ALF}: there are no $L^2$ eigenfunctions on an ALF gravitational instanton $(M^4,g)$ of $A_{n}$/cyclic type.  The key idea is to use the circle action present on $(M,g)$ to perform a Fourier decomposition, thus reducing the eigenvalue problem on $(M,g)$ to a family of eigenvalue problems on $X_0=\R^3\setminus\{x_0,\ldots,x_n\}$ (where $n$ is the same index as appears in $A_{n}$ type) for sections of appropriate complex line bundles over $X_0$.  The main tool is an extension of arguments in \cite{AHK} for absence of eigenvalues for magnetic Schr\"odinger operators on Euclidean space.

In Section \ref{sec:special.hol} we give some partial extensions of Theorem \ref{thm:grav.inst}(b) to the setting of complete non-compact Calabi--Yau 3-folds, $\mathrm{G}_2$ and $\mathrm{Spin}(7)$ manifolds; i.e.~vanishing theorems for $L^2$ eigenforms based on the vanishing of $L^2$ eigenfunctions.  The proofs rely on the decomposition of forms arising from special holonomy and representations of the holonomy group.  

Finally, in Section 8, we give some brief discussion of $L^2$ eigenforms for the Laplacian and special holonomy from the point of view of string theory/M-theory, as this was the original main motivation for the authors to investigate the problems in this article.  

\subsection*{Acknowledgements}
BSA would like to thank L.~Foscolo and S.~Sun for discussions. JDL thanks R.~Mazzeo and A.~Vasy for helpful discussions. We also acknowledge the Instituto Balseiro, Bariloche for hospitality, where this work was partially completed.

\section{Definitions}\label{sec:dfns}

In this section we provide the key definitions and common notation we shall use in the article.  Throughout we assume our manifolds are connected and oriented.

\subsection{Eigenforms}
 We are interested in studying
eigenforms for the (Hodge) Laplacian $\Delta$ on a complete Riemannian manifold $(M,g)$:
\begin{equation}\label{eq:Delta}
    \Delta=\d\d^*+\d^*\d,
\end{equation}
where $\d^*$ is the formal adjoint of the exterior derivative $\d$.  By eigenforms, we mean non-zero $k$-forms $\alpha$ solving
\begin{equation}\label{eq:eigenform}
    \Delta\alpha=\lambda\alpha
\end{equation}
for a constant $\lambda$.  

\begin{rmk}
Our convention in \eqref{eq:Delta} is often called the ``geometer's Laplacian'' and is the negative of the ``analyst's Laplacian''.  For concreteness, on Euclidean space $\mathbb{R}^n$ and acting on functions \eqref{eq:Delta} becomes $-\sum_{j=1}^n\frac{\partial^2}{\partial x_j^2}$ for Euclidean coordinates $(x_1,\ldots,x_n)$.  The reader should be aware of this sign convention issue.
\end{rmk}

Our first assumption is that $\alpha$ satisfying \eqref{eq:eigenform} has the following natural normalisability condition:
\begin{equation}\label{eq:normalizability.1}
    \alpha\in L^2\Omega^k(M,g),
\end{equation}
so
\begin{equation}
    \int_M|\alpha|^2_g\vol_g<\infty.
\end{equation}
Since we are assuming that \eqref{eq:eigenform} holds, \eqref{eq:normalizability.1} forces 
\begin{equation}\label{eq:normalizability.2}
    \Delta\alpha\in L^2\Omega^k(M,g).
\end{equation}

We now provide some notation for eigenforms on a complete Riemannian manifold $(M,g)$ for convenience, as we did in Section \ref{sec:intro}.

\begin{dfn}\label{dfn:eigenforms}
For $\lambda\in\R$ we let
\begin{equation}
    \mathcal{E}^k(\lambda)=\{\alpha\in L^2\Omega^k(M,g)\,\colon \Delta\alpha=\lambda\alpha\}.
\end{equation}
We also let
\begin{equation}
    \mathcal{H}^k=\{\alpha\in L^2\Omega^k(M,g)\,\colon \Delta\alpha=0\}=\mathcal{E}^k(0)
\end{equation}
be the $L^2$ harmonic $k$-forms.
\end{dfn}

\begin{rmk}
    When $M$ is compact, we have that $\mathcal{H}^k$ is isomorphic to $H^k$, the $k$th de Rham cohomology group of $M$, by Hodge theory.  The proof of the Hodge theorem involves minimising the $L^2$ norm of closed forms  in their cohomology class, so the $L^2$ condition is natural.  This also leads to the study of various notions of $L^2$ cohomology on non-compact manifolds, where $\mathcal{H}^k$ is one possible way to generalise the theory of harmonic forms to the non-compact setting.  It remains an important research area to understand what topological or geometric information $\mathcal{H}^k$ encodes in the non-compact setting.
    \end{rmk}

    \begin{rmk}
Again, when $M$ is compact, one way to study eigenvalues of the Laplacian on $k$-forms is to consider the Rayleigh quotients
\begin{equation}\label{eq:Rayleigh}
    \frac{\|\d\alpha\|_{L^2}^2+\|\d^*\alpha\|_{L^2}^2}{\|\alpha\|_{L^2}^2}
\end{equation}
for $\alpha\neq 0$.   Minimising \eqref{eq:Rayleigh} over $\alpha\neq 0$ orthogonal to $\mathcal{H}^k$ yields the first eigenvalue $\lambda_1$ of $\Delta$.  One can then find higher eigenvalues for $\Delta$ by applying a min-max procedure for \eqref{eq:Rayleigh}.  As in the harmonic case, the $L^2$ norm  plays a key role, which motivates studying $\mathcal{E}^k(\lambda)$ in the non-compact setting.  However, our understanding of these spaces is rather limited.  
    \end{rmk}

\subsection{Asymptotic conditions}

We are interested in complete non-compact Riemannian manifolds with certain asymptotic behaviours.  The simplest  are those which are \emph{asymptotically conical} in the following sense.

\begin{dfn}\label{dfn:AC} A complete non-compact Riemannian $n$-manifold $(M^n,g)$ is \emph{asymptotically conical} (AC)  if there exist
\begin{itemize}
    \item a compact set $K\subseteq M$, 
    \item a compact Riemannian $(n-1)$-manifold $(\Sigma^{n-1},g_{\Sigma})$, so that $C=(0,\infty)\times\Sigma$ with coordinate $r\in(0,\infty)$ has the \emph{cone} metric
   \begin{equation}\label{eq:cone.metric}
       g_C=dr^2+r^2g_{\Sigma},
   \end{equation}
     \item a positive constant $R>0$,
    \item a negative constant $\nu<0$ and
    \item a diffeomorphism $\Psi:(R,\infty)\times\Sigma \to M\setminus K$ such that 
  \begin{equation}\label{eq:AC}
    |\nabla_C^j(\Psi^*g-g_C)|_{g_C}=O(r^{\nu-j})\quad\text{as $r\to\infty$}
    \end{equation}
    for all $j\in\mathbb{N}$, where $\nabla_C$ is the Levi-Civita connection of $g_C$.
\end{itemize}
We call $(C,g_C)$ the \emph{asymptotic cone} of $(M,g)$ and $\nu<0$ the \emph{rate} of $(M,g)$.  
\end{dfn}

\begin{rmk}
For a complete AC manifold with Ricci non-negative, which is not isometrically a product with a Euclidean factor, we know that $\Sigma$ must be connected by the Cheeger--Gromoll splitting theorem.
\end{rmk}

\begin{eg}\label{eg:ALE}
    ALE gravitational instantons are AC where the asymptotic cone $C$ is flat, i.e.~a quotient of $\R^4$ by a finite group of isometries fixing the origin.
\end{eg}

\begin{eg} There are many important examples of AC Riemannian manifolds with special holonomy (see e.g.~\cite{BryantSalamon,ConlonHein,FHN2,Lehmann}). These include Calabi--Yau metrics on $T^*\mathcal{S}^n$,  holonomy $\mathrm{G}_2$ metrics  on $\Lambda^2_-T^*N$ for $N=\mathcal{S}^4$ or $\mathbb{CP}^2$, and holonomy Spin(7) metrics on the spinor bundle of $\mathcal{S}^4$ and the nontrivial rank $3$ bundle over $\mathcal{S}^5$.
\end{eg}

We can generalise the notion of asymptotically conical in several directions.  One important way is to allow the asymptotic geometry at infinity to split into a torus factor and a cone or, more generally, into a torus bundle over a cone (cf.~\cite{Cavalleri}).   

\begin{dfn}\label{dfn:ATC} A complete non-compact Riemannian $n$-manifold $(M^n,g)$ is \emph{asymptotically $T^m$-fibred conical} (A$T^m$C) if there exist
\begin{itemize}
    \item a compact set $K \subseteq M$,
    \item a compact Riemannian $(n-m-1)$-manifold $(\Sigma^{n-m-1},g_{\Sigma})$, so that $C=(0,\infty)\times\Sigma$ with coordinate $r\in(0,\infty)$ has the cone metric $g_C$ as in \eqref{eq:cone.metric},
    \item a positive constant $R>0$,
    \item a principal $T^m$-bundle $\pi:P\to (R,\infty)\times\Sigma$ with a radially invariant metric $g_{T^m}$ on the fibres so that each fibre is a flat torus,
    \item a radially invariant connection $A_{\infty}$ on $P$ leading to a metric
    \begin{equation}
        g_P=\pi^*g_C+g_{T},
    \end{equation}
    where $g_T$ is zero on the horizontal spaces of $A_{\infty}$ and equal to $g_{T^m}$ on the vertical spaces,
    \item a negative constant $\nu<0$ and
    \item a diffeomorphism $\Psi:P\to M\setminus K$ such that for all $j\in\N$ we have
    \begin{equation}
        |\nabla^j_C(\Psi^*g-g_P)|_{g_P}=O(r^{\nu-j})\quad\text{and}\quad|\nabla^j_T(\Psi^*g-g_P)|=O(r^{\nu})\quad\text{as $r\to\infty$,}
    \end{equation}
    where $\nabla_C$ and $\nabla_T$ denote the splitting of the Levi-Civita connection of $g_P$ into the horizontal and vertical directions on $P$ respectively.  We also impose appropriate decay conditions on mixed $\nabla_C$ and $\nabla_T$ derivatives of $\Psi^*g-g_P$: see \cite[Definition 3.13]{Cavalleri} for details.
\end{itemize}

\end{dfn}

\begin{rmk} If we take the convention that the ``zero torus'' $T^0$ is a point, then A$T^0$C is just AC.  The case of A$T^1$C is often called \emph{asymptotically local conical} (ALC).
\end{rmk}

\begin{eg}\label{eg:ALF-H} All
 ALF and some ALG and ALH gravitational instantons are A$T^m$C   where $m=1,2,3$ as appropriate and  the cone $C$ is flat.  In general ALG and ALH gravitational instantons are not strictly A$T^m$C as they are only asymptotic to torus fibre bundles, but after passing to a finite cover they become A$T^m$C, so can be treated using the same  methods.   However, ALG* and ALH* gravitational instantons do not fit into the A$T^m$C framework.
\end{eg}

\begin{eg}
There are many examples of A$T^1$C/ALC special holonomy manifolds, including $\mathrm{G}_2$ manifolds (e.g.~\cite{FHN1,FHN2}) and Spin(7) manifolds (e.g.~\cite{FoscoloSpin,Lehmann}).  There are also examples of many A$T^2$C Spin(7) manifolds given in \cite{Cavalleri}.
\end{eg}

\begin{rmk}
    There are other types of asymptotic behaviour one could consider and which appear in the study of special holonomy.  For example,  asymptotically cylindrical examples have been quite well studied (e.g.~\cite{HHN,Kovalev,KovNor}).  We do not study these other cases explicitly, but several of our results either directly apply or should extend to other asymptotic regimes.  We endeavour to indicate this where it is appropriate.
\end{rmk}

\section{Fundamentals}\label{sec:fundamental}

In this section we prove some fundamental results for $L^2$ eigenforms for the Laplacian and their corresponding eigenvalues.  Specifically, we prove a regularity result and an integrability result, which demonstrates that the eigenvalues are non-negative.  We also make some brief interpretations for this result for special holonomy manifolds in terms of stability.

\subsection{Regularity}
The conditions \eqref{eq:normalizability.1} and \eqref{eq:normalizability.2}  imply the following local regularity result.

\begin{lem}\label{lem:smooth}
    An $L^2$ eigenform for the Laplacian is smooth: i.e.~a solution $\alpha$ to \eqref{eq:eigenform} satisfying \eqref{eq:normalizability.1} must also satisfy $\alpha\in C^{\infty}$. 
\end{lem}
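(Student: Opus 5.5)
The plan is to use local elliptic regularity, since smoothness is a local property. First I interpret \eqref{eq:eigenform} for $\alpha\in L^2\Omega^k(M,g)$ in the distributional sense: for every compactly supported smooth $k$-form $\phi$,
\begin{equation*}
\int_M\ip{\alpha}{\Delta\phi}\vol_g=\lambda\int_M\ip{\alpha}{\phi}\vol_g .
\end{equation*}
This uses that $\Delta$ is formally self-adjoint. If $\alpha$ is assumed a priori only to be in $L^2_{\mathrm{loc}}$, this is the natural meaning of the equation. If $\alpha$ is assumed to be $C^2$, the statement is trivial, so the content lies in the weak setting.

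Next I fix a point $p\in M$ and a coordinate chart $U\ni p$ with relatively compact closure, together with a local frame trivialising $\Lambda^kT^*M$ over $U$. In these coordinates the operator $P=\Delta-\lambda$ is a second order linear system with smooth coefficients. By the Weitzenb\"ock formula, or directly from the symbol computation $\sigma_\xi(\d\d^*+\d^*\d)=|\xi|_g^2\,\mathrm{id}$, its principal part is $-g^{ij}\partial_i\partial_j$ acting diagonally. Hence $P$ is elliptic, indeed of Laplace type with scalar principal symbol.

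I then apply interior elliptic regularity for such systems. Given $u\in L^2(U)$ with $Pu=0$ weakly, one gets $u\in H^s_{\mathrm{loc}}(U)$ for every $s$. This can be done by constructing a parametrix $Q$ with $QP=I-R$, where $R$ is smoothing, applied to $\chi u$ for a cutoff $\chi$. Alternatively one can use Friedrichs mollifiers and difference quotients to bootstrap from $H^s_{\mathrm{loc}}$ to $H^{s+2}_{\mathrm{loc}}$, since $P(\chi u)=[P,\chi]u$ is a first order expression in $u$. The Sobolev embedding $H^s_{\mathrm{loc}}\subset C^{m}$ for $s>m+n/2$ then gives $\alpha\in C^\infty(U)$. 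Since $p$ was arbitrary, $\alpha$ is smooth on $M$.

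The only delicate point is the start of the bootstrap, namely going from merely $L^2$ with a distributional equation to $H^1_{\mathrm{loc}}$ or $H^2_{\mathrm{loc}}$. The parametrix (pseudodifferential) route handles this most cleanly, because it works for arbitrary distributions. Completeness of $g$ and the global $L^2$ condition are not needed for this lemma. They matter only later, for the integration by parts showing $\lambda\geq 0$.
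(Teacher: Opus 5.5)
Your argument is correct and is essentially the paper's: interior elliptic regularity bootstrapped through the local Sobolev spaces $L^2_{k,\mathrm{loc}}$, then Sobolev embedding; your care over the distributional meaning of the equation, and your remark that only $\alpha\in L^2_{\mathrm{loc}}$ is needed, are accurate refinements. One small slip: in the commutator route, $u\in H^s_{\mathrm{loc}}$ only gives $[P,\chi]u\in H^{s-1}$, so each step gains one derivative (to $H^{s+1}_{\mathrm{loc}}$) rather than two, which is harmless since you iterate anyway.
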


\begin{proof}
    Since $\Delta\alpha\in L^2$ by \eqref{eq:normalizability.2}, we have by elliptic regularity that $\alpha\in L^2_{2,\text{loc}}$. It then follows that $\Delta\alpha\in L^2_{2,\text{loc}}$ by \eqref{eq:eigenform}.  Iterating this argument means that $\alpha\in L^2_{k,\text{loc}}$ for all $k\in\mathbb{N}$.  Sobolev embedding now yields that $\alpha\in C^k_{\text{loc}}$ for all $k\in\mathbb{N}$, which gives the result.
\end{proof}

\subsection{Non-negativity of eigenvalues}

It is well-known that the eigenvalues  of the Laplacian \eqref{eq:Delta} on $k$-forms on a compact manifold are non-negative.  One reason is that eigenvalues are given via Rayleigh quotients as in \eqref{eq:Rayleigh}.  These considerations motivate us to  consider whether $\d\alpha,\d^*\alpha\in L^2$ if $\alpha\in\mathcal{E}^k(\lambda)$.   We show that this is indeed the case in the setting of AC and A$T^m$C manifolds, from which we deduce that the eigenvalues of the Laplacian on $k$-forms are non-negative.

\begin{lem}\label{lem:lambda.non.neg} Let $(M,g)$ be an AC or A$T^m$C Riemannian manifold. Let $\alpha\neq 0$ be a differential form satisfying \eqref{eq:eigenform} for $\lambda\in\R$ and \eqref{eq:normalizability.1}.  Then $\d\alpha,\d^*\alpha\in L^2$ and
\begin{equation}\label{eq:lambda.non.neg}
\lambda\|\alpha\|_{L^2}^2=\langle \alpha,\Delta\alpha\rangle_{L^2}=\|\d\alpha\|_{L^2}^2+\|\d^*\alpha\|_{L^2}^2\geq 0,
\end{equation}
so $\lambda\geq 0$.
\end{lem}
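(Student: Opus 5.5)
The plan is a cutoff-function integration by parts, made possible by the fact that AC and A$T^m$C manifolds are complete with controlled geometry at infinity. Choose $R_0$ large and, for each $R>R_0$, a cutoff $\chi_R$ with $\chi_R=1$ on $\{r\le R\}$, $\chi_R=0$ on $\{r\ge 2R\}$ and $|\d\chi_R|\le C/R$. Here $r$ is the radial coordinate pulled back by $\Psi$ and extended smoothly over $K$; $\chi_R$ depends only on $r$, so the torus directions cause no extra difficulty in the A$T^m$C case. By Lemma~\ref{lem:smooth}, $\alpha$ is smooth, so all local integrations by parts are legitimate.

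First I would show $\d\alpha,\d^*\alpha\in L^2$. Pair $\Delta\alpha=\lambda\alpha$ with $\chi_R^2\alpha$ and integrate by parts:
\begin{equation*}
\lambda\int\chi_R^2|\alpha|^2=\int\chi_R^2\bigl(|\d\alpha|^2+|\d^*\alpha|^2\bigr)+\int 2\chi_R\bigl(\langle \d\chi_R\wedge\alpha,\d\alpha\rangle-\langle \iota_{\nabla\chi_R}\alpha,\d^*\alpha\rangle\bigr).
\end{equation*}
Apply Cauchy--Schwarz and $2ab\le \tfrac12a^2+2b^2$ to the cross terms, and absorb half of $\int\chi_R^2(|\d\alpha|^2+|\d^*\alpha|^2)$ into the left-hand side. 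This gives
\begin{equation*}
\int\chi_R^2\bigl(|\d\alpha|^2+|\d^*\alpha|^2\bigr)\le 2|\lambda|\,\|\alpha\|_{L^2}^2+C R^{-2}\|\alpha\|_{L^2}^2.
\end{equation*}
The right-hand side is bounded uniformly in $R$, so monotone convergence yields $\d\alpha,\d^*\alpha\in L^2$. This step only really uses completeness; the AC/A$T^m$C structure just supplies an explicit exhaustion with $|\d\chi_R|=O(1/R)$.

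Next I would remove the cutoff in the identity itself. Since $\d\alpha,\d^*\alpha,\alpha\in L^2$, the cross terms are bounded by
\begin{equation*}
\frac{C}{R}\,\|\alpha\|_{L^2(\{R\le r\le 2R\})}\Bigl(\|\d\alpha\|_{L^2}+\|\d^*\alpha\|_{L^2}\Bigr),
\end{equation*}
which tends to $0$ as $R\to\infty$. Dominated convergence in the other terms then gives $\lambda\|\alpha\|_{L^2}^2=\|\d\alpha\|_{L^2}^2+\|\d^*\alpha\|_{L^2}^2\ge0$. The middle equality $\langle\alpha,\Delta\alpha\rangle_{L^2}=\lambda\|\alpha\|^2$ is immediate from the eigen-equation. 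Finally, $\alpha\neq0$ gives $\|\alpha\|_{L^2}>0$, so $\lambda\ge0$.

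The main, though mild, obstacle is the first step. A priori $\d\alpha$ need not be in $L^2$, so the absorption argument must be arranged so that every term on the right-hand side is controlled by $\|\alpha\|_{L^2}$ alone. I also need to confirm that the cutoff functions with gradient bounded by $C/R$ exist in the A$T^m$C setting. They do, because $r$ is a proper function with $|\d r|$ bounded, by the asymptotics of $\Psi^*g$ to $g_P$.
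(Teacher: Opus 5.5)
Your proposal is correct and follows essentially the same route as the paper: a Karp-style cutoff integration by parts on a compact exhaustion, with the cross terms absorbed via $2ab\le\tfrac12a^2+2b^2$, giving a bound on $\|\chi\,\d\alpha\|_{L^2}^2+\|\chi\,\d^*\alpha\|_{L^2}^2$ by a multiple of $\|\alpha\|_{L^2}^2$ that is uniform in the cutoff. The only minor difference is in the final step: you use cutoffs with $|\d\chi_R|=O(1/R)$ and explicitly pass to the limit in the identity, whereas the paper uses cutoffs with gradient bounded by $1$ and simply asserts that the usual integration by parts formula holds once $\d\alpha,\d^*\alpha\in L^2$.
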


\begin{proof}
Note that from \eqref{eq:normalizability.1} and \eqref{eq:normalizability.2} we have that $\alpha,\Delta\alpha\in L^2$.  Suppose that we have proved that $\d\alpha,\d^*\alpha\in L^2$.  Then, the usual integration by parts formula is valid and so
\begin{equation}
\langle\alpha,\Delta\alpha\rangle_{L^2}=\|\d\alpha\|_{L^2}^2+\|\d^*\alpha\|_{L^2}^2,
\end{equation}
which immediately gives \eqref{eq:lambda.non.neg} as required.  Therefore, it suffices to prove that $\d\alpha,\d^*\alpha\in L^2$ to complete the proof.

To this end, we adapt the proof in \cite[pp.~340--341]{Karp}.   The idea is to perform the usual integration by parts as above but on compact sets which form an exhaustion of $M$, then use the fact that $\alpha,\Delta\alpha\in L^2$ to deduce that $\d\alpha,\d^*\alpha\in L^2$.

Recall the notation from Definitions \ref{dfn:AC} and \ref{dfn:ATC}.  In the AC case, for each $k\in\N$ let \begin{equation} 
U_k=K\cup \Psi\big((R,R+2k+1)\times\Sigma\big).
\end{equation}
In the A$T^m$C case, for each $k\in\N$ we let 
\begin{equation}
    P_k=\pi^{-1}\big((R,R+2k+1)\times\Sigma\big),
\end{equation}
i.e.~the total space of the bundle $P$ over a truncation of the cone $C$, and
\begin{equation}
    U_k=K\cup \Psi(P_k).
\end{equation}
In both cases, $U_k$ is an open set with compact closure in $M$ for every $k$, and $\bigcup_{k\in \N}U_k=M$.  

For each $k\in\N$ we then choose a smooth cut-off function $f_k:M\to [0,1]$ such that 
\begin{equation}
    f_k(x)=\left\{\begin{array}{cl} 1 & x\in U_k, \\
    0 & x\in  M\setminus\overline{U_{k+1}}
    \end{array}\right.\quad\text{and}\quad |\nabla f_k|_g\leq 1.
\end{equation}
Such functions exist by choosing each to be $1$ on $K$ and then using composition with $\Psi$ and radial cut-off functions on $C=(0,\infty)\times\Sigma$. 

As argued in the proof of Lemma \ref{lem:smooth}, we know that $\Delta\alpha\in L^2$ and $\alpha\in L^2_{2,\text{loc}}$.  
Therefore, we may compute:
\begin{align}
    \langle f_k\alpha,f_k\Delta\alpha\rangle_{L^2}&=\int_M f_k^2\d\d^*\alpha\wedge*\alpha +\int_Mf_k^2\alpha\wedge *\d^*\d\alpha\\
    &=\int_M\d(f_k^2\d^*\alpha\wedge*\alpha)+(-1)^{k+1}\int_M \d^*\alpha\wedge \d(f_k^2*\alpha)\label{eq:int.parts.1}\\
    &\qquad-\int_M \d(f_k^2\alpha\wedge *\d\alpha)+\int_M\d(f_k^2\alpha)\wedge *\d\alpha.\label{eq:int.parts.2}
\end{align}
The first terms in \eqref{eq:int.parts.1} and \eqref{eq:int.parts.2} vanish by Stokes' theorem since $f_k^2\d^*\alpha\wedge *\alpha$ and $f_k^2\alpha\wedge *\d\alpha$ are compactly supported.  For the second term in \eqref{eq:int.parts.1} we see that
\begin{align}
 (-1)^{k+1}\int_M \d^*\alpha\wedge \d(f_k^2*\alpha)&=\|f_k\d^*\alpha\|_{L^2}^2+2(-1)^{k+1}\int_Mf_k\d^*\alpha\wedge\d  f_k\wedge *\alpha.
\end{align}
 Similarly, for the second term in \eqref{eq:int.parts.2} we have
 \begin{align}
     \int_M\d(f_k^2\alpha)\wedge *\d\alpha=\|f_k\d\alpha\|_{L^2}^2+2\int_Mf_k\d f_k\wedge\alpha\wedge *\d\alpha.
 \end{align}
Since \eqref{eq:eigenform} is satisfied, we have that
\begin{multline}
    \lambda\|f_k\alpha\|_{L^2}^2-\|f_k\d\alpha\|_{L^2}^2-\|f_k\d^*\alpha\|_{L^2}^2\\
    =-2\int_M f_k\d f_k\wedge \d^*\alpha\wedge *\alpha +2\int_M f_k \d f_k\wedge \alpha\wedge *\d\alpha.
\end{multline}
Using $2ab\leq \frac{1}{2}a^2+2b^2$, we see that
\begin{align}
    2\left|\int_M  f_k\d f_k\wedge \d^*\alpha\wedge *\alpha\right|&\leq \frac{1}{2}\|f_k\d^*\alpha\|_{L^2}^2+2\|\d f_k\wedge *\alpha\|_{L^2}^2\\
    & \leq \frac{1}{2}\|f_k\d^*\alpha\|_{L^2}^2+2\|\alpha\|_{L^2}^2,
\end{align}
since $|\nabla f_k|_g\leq 1$.  An almost identical argument allows us to show that
\begin{equation}
2\left|\int_M f_k \d f_k\wedge \alpha\wedge *\d\alpha\right|\leq \frac{1}{2}\|f_k\d\alpha\|^2_{L^2}+2\|\alpha\|_{L^2}^2.
\end{equation}
Combining these observations, we see that
\begin{equation}
    \|f_k\d\alpha\|_{L^2}^2+\|f_k\d^*\alpha\|_{L^2}^2\leq c\|\alpha\|_{L^2}^2
\end{equation}
for some constant $c>0$ (depending on $\lambda$).  Since the right-hand side is independent of $k$, we can send $k\to \infty$ and deduce that $\d\alpha,\d^*\alpha\in L^2$.  
The result then follows.
\end{proof}

We also note the following in the case of gravitational instantons.

\begin{lem}\label{lem:integrability.grav.inst}  Let $(M^4,g)$ be a gravitational instanton.  
   Let $\alpha\neq 0$ be a differential form satisfying \eqref{eq:eigenform} for $\lambda\in\R$ and \eqref{eq:normalizability.1}.  Then $\d\alpha,\d^*\alpha\in L^2$ and \eqref{eq:lambda.non.neg} holds, so $\lambda\geq 0$.
\end{lem}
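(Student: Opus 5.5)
The plan is to rerun the cut-off argument from the proof of Lemma \ref{lem:lambda.non.neg} word for word. That argument used the AC/A$T^m$C structure for one thing only: to produce an exhaustion of $M$ by relatively compact open sets $U_k$ with cut-off functions $f_k$ satisfying $f_k=1$ on $U_k$, $\operatorname{supp} f_k\subseteq\overline{U_{k+1}}$ and $|\nabla f_k|_g\le 1$. The integration by parts, the absorption via $2ab\le\frac12a^2+2b^2$, and the final limit $k\to\infty$ need only $\alpha,\Delta\alpha\in L^2$, local regularity (Lemma \ref{lem:smooth}), and that uniform gradient bound. So the task reduces to building such cut-offs on a gravitational instanton.

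The most robust approach avoids the asymptotic classification altogether and uses only completeness. Fix $p\in M$ and let $r(x)=d_g(p,x)$, which is $1$-Lipschitz. Set $f_k=\phi(r-k)$, where $\phi:\R\to[0,1]$ is smooth, equals $1$ on $(-\infty,0]$, vanishes on $[1,\infty)$, and has $|\phi'|\le 2$. By Hopf--Rinow, closed metric balls are compact, so each $f_k$ is compactly supported, and $f_k\to1$ pointwise. Since $r$ is only Lipschitz, I would either observe that the computation needs only $f_k\in W^{1,\infty}$ with compact support (the integration by parts against the smooth form $\alpha$ still holds), or else replace $r$ by a smooth function $\tilde r$ with $|\tilde r-r|\le1$ and $|\nabla\tilde r|\le2$, obtained by standard smoothing of distance functions on complete manifolds.

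With this choice the constant $1$ becomes some constant $C$, and the estimates become $2\left|\int f_k\,\d f_k\wedge\d^*\alpha\wedge*\alpha\right|\le\frac12\|f_k\d^*\alpha\|^2+2C^2\|\alpha\|^2$, with the analogous bound for the $\d\alpha$ term. This gives $\|f_k\d\alpha\|^2+\|f_k\d^*\alpha\|^2\le c\|\alpha\|^2$ uniformly in $k$, and monotone convergence then yields $\d\alpha,\d^*\alpha\in L^2$. Once that is known, the global identity \eqref{eq:lambda.non.neg} follows either by repeating the computation and letting $k\to\infty$ (the boundary terms are bounded by $C\|\d f_k\|_\infty$ times integrals of $|\alpha||\d\alpha|$ and $|\alpha||\d^*\alpha|$ over $\operatorname{supp}\d f_k$, which tend to zero), or by invoking Gaffney's theorem. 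This gives $\lambda\ge0$.

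The only step that needs care is the regularity of the cut-offs, namely justifying Lipschitz test functions or smoothing $r$. Neither the hyperk\"ahler condition nor curvature decay is actually needed, so the statement holds on any complete manifold. If an argument closer to the paper's style is preferred, the classification \cite{SunZhang} gives asymptotic models for all types (including ALG* and ALH*) equipped with a radial function of bounded gradient, but the distance-function argument makes that unnecessary.
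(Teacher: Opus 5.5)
Your proof is correct, and it takes a genuinely different route from the paper. The paper argues case by case through the asymptotic classification. ALE, ALF, ALG and ALH instantons are (after possibly passing to a finite cover) AC or A$T^m$C, so Lemma \ref{lem:lambda.non.neg} applies. For ALG* and ALH* the paper only sketches that the model end carries a radial function from which the exhaustion $U_k$ and cut-offs $f_k$ can be built.

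You replace all of this with the cut-offs $\phi(d_g(p,\cdot)-k)$. On any complete manifold these are compactly supported by Hopf--Rinow, with $|\nabla f_k|\le 2$ almost everywhere. You also correctly isolate the only technical point, namely Lipschitz test functions, handled either by $W^{1,\infty}$ integration by parts or by Greene--Wu smoothing. The absorption estimate and the vanishing of the cross terms supported on $\operatorname{supp}\d f_k$ then go through as you describe. To make the limit step airtight, choose $\phi$ non-increasing so that monotone convergence applies, or just use Fatou's lemma.

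Your route gains several things. It is independent of the deep classification results \cite{Chens2,Chens1,Chens3,SunZhang}. It avoids having to make the ALG*/ALH* model geometry precise. It proves the statement on every complete Riemannian manifold. In particular, it shows that Lemma \ref{lem:lambda.non.neg} holds without any asymptotic assumption, as the remark after that lemma anticipates, and that the hypothesis $\d f\in L^2$ in Proposition \ref{prop:functions.forms} is automatic.

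The paper's route stays within the asymptotic framework used elsewhere in the article, with smooth cut-offs coming straight from a radial coordinate. The price is reliance on heavy inputs and a final case that is only sketched.
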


\begin{proof} For the case of ALE, ALF, ALG and ALH gravitational instantons the result follows from Lemma \ref{lem:lambda.non.neg} by the observations in Examples \ref{eg:ALE} and \ref{eg:ALF-H}.

By Remark \ref{rmk:AL*}, the only remaining cases are if $(M^4,g)$ is an ALG* or ALH* gravitational instanton. Then, outside of a compact set, $M$ can be identified with a model space on which one can define a radial distance function, just as in the AC and A$T^m$C settings.  Hence, one can adapt  the construction of the exhaustion by sets $U_k\subseteq M$ and cut-off functions $f_k:M\to [0,1]$ for $k\in\N$ in the proof of Lemma \ref{lem:lambda.non.neg} so that it carries through in these cases.
\end{proof}

\begin{rmk}
As indicated in the proof of Lemma \ref{lem:integrability.grav.inst}, Lemma \ref{lem:lambda.non.neg} extends to other types of complete non-compact Riemannian manifolds, such as asymptotically cylindrical ones, which arise in special holonomy. 
\end{rmk}

\subsection{Special holonomy and stability}

On Ricci-flat special holonomy manifolds, the linearisation of Ricci curvature under variations of the metric (after gauge-fixing for the action of diffeomorphisms and conformal transformations) is equivalent to the Laplacian on a certain space of forms.  Lemma \ref{lem:lambda.non.neg} shows that there are no ``negative modes'' for varying the Ricci curvature on AC or A$T^m$C Ricci-flat special holonomy manifolds. This means in particular that, at the linear level, there are no strictly destabilising directions for such Ricci-flat metrics under the Ricci flow, or for the Einstein--Hilbert functional.  

However, as shown in \cite{Chi}, there are examples of AC Ricci-flat special holonomy manifolds that can be deformed to AC Ricci-flat manifolds which are now no longer special holonomy.  In other words, the ``zero modes'' that define Ricci-flat deformations of an AC Ricci-flat special holonomy manifold do not necessarily correspond to special holonomy deformations: this is in direct contrast to the compact   and asymptotically cylindrical cases where such phenomena cannot occur \cite{Kovalev,Nordstrom,McKWang}.

\section{Functions}\label{sec:functions}

In this section we primarily study $L^2$ eigenfunctions for the Laplacian with non-zero eigenvalues.  We prove that the existence of such eigenfunctions guarantees the existence of $L^2$ eigenforms for the Laplacian of other degrees in the presence of appropriate reduced holonomy.  We also prove in the asymptotically conical setting that there are no such $L^2$ eigenfunctions and the extension of this non-existence result to $L^2$ eigenforms.

\subsection{Eigenfunctions and parallel forms}

We first show that we can obtain eigenforms from eigenfunctions in the presence of suitable parallel forms on the manifold, which exactly puts us in the setting of reduced holonomy.  This is a restatement of Proposition \ref{prop:function.forms.intro}.  

\begin{prop}\label{prop:functions.forms}
Let $(M^n,g)$ be a complete Riemannian manifold such that whenever $f\in \mathcal{E}^0(\lambda)$ for $\lambda\neq 0$ we have $\d f\in L^2$. 
\begin{itemize}
\item[(a)] Suppose the holonomy of $g$ is contained in $\mathrm{U}(n/2)$ if $n$ is even  
or in $\mathrm{G}_2$ if $n=7$.  
 If $\mathcal{E}^0(\lambda)\neq 0$ for $\lambda\neq 0$ then $\mathcal{E}^k(\lambda)\neq 0$ for all $k\in\{0,\ldots,n\}$.  
\item[(b)] Suppose $n=8$ and the holonomy of $g$ is contained in $\mathrm{Spin}(7)$.  
 If $\mathcal{E}^0(\lambda)\neq 0$ for $\lambda\neq 0$ then $\mathcal{E}^k(\lambda)\neq 0$ for all $k\in\{0,1,3,4,5,7,8\}$.
\end{itemize}
\end{prop}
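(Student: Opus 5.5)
The plan is to produce eigenforms from a fixed eigenfunction $f\in\mathcal{E}^0(\lambda)$, $\lambda\neq 0$, by multiplying by parallel forms and applying $\d$ and $*$. Two facts drive everything. First, $\Delta$ commutes with $\d$, $\d^*$ and the Hodge star $*$. Second, if $\phi$ is a parallel $p$-form, the Weitzenb\"ock formula gives $\Delta(f\phi)=(\Delta f)\phi$, because $\nabla\phi=0$ kills every derivative and curvature term hitting $\phi$. Equivalently, $\Delta=\nabla^*\nabla+\mathcal{R}$ and $\mathcal{R}(f\phi)=f\mathcal{R}(\phi)=0$, since parallel forms are harmonic. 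Consequently $f\phi\in\mathcal{E}^p(\lambda)$ whenever $f\phi\neq 0$. Also $f\phi\in L^2$, since $|\phi|$ is constant.

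Next I need $\d f\in L^2$, which is exactly the hypothesis. Then $\d f$ is an $L^2$ $1$-form with $\Delta\d f=\d\Delta f=\lambda\,\d f$. It is nonzero: if $\d f=0$ then $f$ is constant, so $\lambda f=\Delta f=0$ forces $f=0$. Hence $\d f\wedge\phi\in\mathcal{E}^{p+1}(\lambda)$, since $\d(f\phi)=\d f\wedge\phi$ and $\Delta$ commutes with $\d$. For the remaining degrees I apply $*$. This gives $*(f\phi)\in\mathcal{E}^{n-p}$ and $*(\d f\wedge\phi)\in\mathcal{E}^{n-p-1}$, both still $L^2$ and nonzero.

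The choices of $\phi$ are as follows. In the K\"ahler case, with $\omega$ parallel, I use $\phi=\omega^j$ for $j=0,\dots,n/2$. This gives $f\omega^j$ in every even degree, and $\d f\wedge\omega^j$ in every odd degree $2j+1\le n-1$. For the $\mathrm{G}_2$ case I use $1,\varphi,*\varphi,\vol$, which give degrees $0,3,4,7$ directly. Then $\d f$ and $\d f\wedge\varphi$ give degrees $1$ and $4$, and $\d f\wedge*\varphi$ gives degree $5$; Hodge duals then give degrees $6$ and $2$. For $\mathrm{Spin}(7)$ I use $1,\Phi,\vol$ and $\d f$, $\d f\wedge\Phi$, which cover degrees $0,4,8,1,5$, with duals covering $7$ and $3$. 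Degrees $2$ and $6$ are not reached, matching the statement.

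The main obstacle is non-vanishing of the wedge products $\d f\wedge\omega^j$, $\d f\wedge\varphi$, $\d f\wedge*\varphi$ and $\d f\wedge\Phi$. I would handle this pointwise via linear algebra at a point where $\d f\neq 0$. Wedging a nonzero $1$-form with $\omega^j$ is injective for $2j+1\le n/2+\ldots$; more simply, it is nonzero when $2j+1\le n-1$, since in a unitary frame $e^1\wedge\omega^j$ contains the monomial $e^1\wedge(e^{3}\wedge e^4)\wedge\cdots$. Likewise, wedging with $\varphi$, $*\varphi$ or $\Phi$ is injective on $1$-forms, because these forms are stable and equivariant and the relevant maps are nonzero equivariant maps out of an irreducible representation. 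For example, $|v\wedge\Phi|^2=7|v|^2$ up to normalisation, and similarly for $\varphi$.
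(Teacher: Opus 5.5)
Your proposal is correct and follows essentially the same construction as the paper: multiply the eigenfunction $f$ by the parallel forms, wedge $\d f$ with them, and use the Hodge star to fill the remaining degrees. The differences lie only in the justifications. You derive $\Delta(f\phi)=(\Delta f)\phi$ uniformly from the Weitzenb\"ock formula and obtain $\d f\wedge\phi=\d(f\phi)$ as an eigenform via $[\Delta,\d]=0$, where the paper invokes the K\"ahler identities and the rough Laplacian on $1$-forms; and your pointwise check that $\d f\wedge\omega^j$, $\d f\wedge\varphi$, $\d f\wedge*\varphi$ and $\d f\wedge\Phi$ are nonzero makes explicit a point the paper leaves implicit.
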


\begin{proof}
It is an elementary fact that, on functions,
\begin{equation}\label{eq:Delta.Rough}
    \Delta=\nabla^*\nabla.
\end{equation}
In the case when $g$ has holonomy contained in $\mathrm{G}_2$ or $\mathrm{Spin}(7)$, then $g$ is Ricci flat so \eqref{eq:Delta.Rough} also holds on 1-forms.  These observations are useful in the following arguments.

Suppose first that we are in the K\"ahler case.  Then the K\"ahler form $\omega$ is parallel on $M$ and thus has constant norm. Moreover, by the K\"ahler identities, wedge product with $\omega$ (the Lefschetz operator) commutes with the Hodge Laplacian $\Delta$.  Hence, if $f\in\mathcal{E}^0(\lambda)$ then
\begin{equation}
    \Delta(f\omega^k)=(\Delta f)\omega^k=\lambda f\omega^k
\end{equation}
 and $f\omega^k\in L^2$ as $\omega^k$ has constant norm.
Thus, $\mathcal{E}^{2k}(\lambda)\neq 0$ for all $2k\leq n$ if $\mathcal{E}^0(\lambda)\neq 0$.  To get odd degree eigenforms we note that if $f\in\mathcal{E}^0(\lambda)$, $f\neq 0$ then $\d f\in\mathcal{E}^1(\lambda)$ with $\d f\neq 0$ since $f,\d f\in L^2$. Then
\begin{equation}
    \Delta(\d f\wedge\omega^k)=\Delta(\d f)\wedge\omega^k=\lambda\d f\wedge\omega^k.
\end{equation}
We deduce that $\mathcal{E}^{2k+1}(\lambda)\neq 0$ for all $2k+1\leq n$ if $\mathcal{E}^0(\lambda)\neq 0$.

Suppose now that we are in the $\mathrm{G}_2$ setting, which means that we have a parallel 3-form $\varphi$ and 4-form $*\varphi$, both with constant norm.  If $f\in\mathcal{E}^0(\lambda)$ with $f\neq 0$ then $f\varphi$ and $f*\varphi$ give non-zero elements in $\mathcal{E}^3(\lambda)$ and $\mathcal{E}^4(\lambda)$. As above, we also have $\d f\in\mathcal{E}^1(\lambda)\setminus\{0\}$.  Using \eqref{eq:Delta.Rough} and the fact that $*\varphi$ is parallel we see that $\d f\wedge *\varphi\in\mathcal{E}^5(\lambda)\setminus\{0\}$.  To get non-zero eigenforms in the remaining degrees 2 and 6 we may just use the fact that the Hodge star and $\Delta$ commute.  This completes the proof of (a).

In the $\mathrm{Spin}(7)$ setting we have a parallel 4-form, so the same arguments as in the $\mathrm{G}_2$ case  immediately give (b).  
\end{proof}

\begin{eg}  Applying
Proposition \ref{prop:functions.forms}(a) in the case of holonomy $\mathrm{Sp}(1)\cong\mathrm{SU}(2)\subseteq\mathrm{U}(2)$  proves Theorem \ref{thm:grav.inst}(a).
\end{eg}

\begin{rmk}
Note that  in the $\mathrm{Spin}(7)$ case (Proposition \ref{prop:functions.forms}(b)) we do not guarantee that if $\mathcal{E}^0(\lambda)\neq 0$ then $\mathcal{E}^2(\lambda)\neq 0$.  However, we do know it is true in the special case that the $\mathrm{Spin}(7)$-manifold is a Calabi--Yau 4-fold, for example, by Proposition \ref{prop:functions.forms}(a).
\end{rmk}

\subsection{AC eigenfunctions}

We now turn to the case of asymptotically conical manifolds, where we will need to use a distinguished function, called a \emph{radius} function. 

\begin{lem}\label{lem:radius} Let $(M,g)$ be an AC Riemannian manifold with rate $\nu<0$ and recall the notation of Definition \ref{dfn:AC}. Let $d_g$ be the distance function determined by $g$ and fix a point $p\in K$. 
    There exists a smooth function $\rho:M\to [0,\infty)$ such that  on $M\setminus K$ we have
    \begin{itemize}
\item[(a)]    $cd_g(\cdot,p)\leq \rho\leq c^{-1}d_g(\cdot,p)$ for some $c>0$,
\item[(b)] $||\nabla\rho|_g-1|=O(\rho^{\nu})$ as $\rho\to\infty$,
\item[(c)] $|\Hess\rho^2 -2g|_g=O(\rho^{\nu})$ as $\rho\to\infty$.
    \end{itemize}
    We call $\rho$ a radius function on $(M,g)$.
\end{lem}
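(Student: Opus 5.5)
The plan is to transplant the cone's radial coordinate to $M$ via $\Psi$ and then glue it to a smooth function on the compact part. On the model cone $C=(0,\infty)\times\Sigma$ the coordinate $r$ satisfies the exact identities $|\nabla_C r|_{g_C}=1$ and $\Hess_{g_C}(r^2)=2g_C$. The second identity holds because $\nabla_C(r\partial_r)=\mathrm{id}$ on a cone, so $\frac12\nabla_C^2 r^2 = g_C$. Fix a smooth cut-off $\chi:\R\to[0,1]$ with $\chi(t)=0$ for $t\le R+1$ and $\chi(t)=1$ for $t\ge R+2$. Then define
\begin{equation*}
\rho = \chi(r\circ\Psi^{-1})\, r\circ\Psi^{-1} + \bigl(1-\chi(r\circ\Psi^{-1})\bigr)
\end{equation*}
on $M\setminus K$, and set $\rho=1$ on $K$. (One can enlarge $K$ and shift $R$ as needed so that the interpolation region sits inside $\Psi((R,\infty)\times\Sigma)$ and $\rho$ is smooth and nonnegative.) Since (b) and (c) are only asymptotic statements, it suffices to verify them where $\rho=r\circ\Psi^{-1}$, i.e.\ for $r\ge R+2$.

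For (b), work on $(R+2,\infty)\times\Sigma$ with $h=\Psi^*g$. We have $|\dd r|_h^2 = h^{-1}(\dd r,\dd r)$ and $g_C^{-1}(\dd r,\dd r)=1$. By \eqref{eq:AC} with $j=0$, $|h-g_C|_{g_C}=O(r^\nu)$, so for large $r$ the inverse metrics satisfy $|h^{-1}-g_C^{-1}|_{g_C}=O(r^\nu)$ as well. This gives $\bigl||\nabla\rho|_g^2-1\bigr|=O(r^\nu)$, and hence (b).

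For (c), write $\Hess_h(r^2)=\nabla_h \dd(r^2)$ and compare it with $\nabla_C\dd(r^2)=2g_C$. The difference of the Levi-Civita connections is the tensor $A=\nabla_h-\nabla_C$. It is given by the standard formula in terms of $h^{-1}$ and $\nabla_C(h-g_C)$, so by \eqref{eq:AC} with $j=1$ we get $|A|_{g_C}=O(r^{\nu-1})$. Then
\begin{equation*}
\Hess_h(r^2)-2h = -A(\dd(r^2)) + 2(g_C-h),
\end{equation*}
and $|\dd(r^2)|_{g_C}=2r$. Both terms are therefore $O(r^{\nu})$ in $g_C$-norm, and hence in $h$-norm, since the two norms are uniformly equivalent for large $r$. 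This term-by-term decay bookkeeping for the connection difference is the main point needing care, though it is routine.

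Finally, (a) will follow from comparing $d_g(\cdot,p)$ with $r$. In one direction, (b) bounds $|\nabla\rho|$ and $\rho$ is bounded on a compact set, so $\rho$ is Lipschitz and $\rho\le C(d_g(\cdot,p)+1)$. In the other direction, a radial curve $t\mapsto \Psi(t,\sigma)$ has $h$-length at most $(1+O(R^\nu))(r-R-2)$, and joining it to $p$ through the compact region gives $d_g(\cdot,p)\le C(\rho+1)$. Away from a compact set both $\rho$ and $d_g(\cdot,p)$ are bounded below by a positive constant, so the additive constants can be absorbed. This gives $c\,d_g\le\rho\le c^{-1}d_g$ on $M\setminus K$ after possibly enlarging $K$.
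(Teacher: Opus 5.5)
Your proposal is correct and follows essentially the same route as the paper: take $\rho=r\circ\Psi^{-1}$ near infinity, extend it smoothly over the rest of $M$, obtain (b) from $|\nabla_C r|=1$ with the $j=0$ decay, obtain (c) from $\Hess_{g_C} r^2=2g_C$ with the AC decay, and get (a) by comparing $r$ with $d_g(\cdot,p)$. You also fill in details the paper leaves implicit: the explicit cut-off, the $O(r^{\nu-1})$ bound on $\nabla_h-\nabla_C$ coming from the $j=1$ condition, and the Lipschitz and radial-curve argument for (a), where enlarging $K$ (or taking $p$ in the interior of $K$) sensibly handles the degenerate behaviour near $\partial K$.
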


\begin{proof}
Recall the notation of Definition \ref{dfn:AC}. 
We define $\rho$ on $\Psi((R+1,\infty)\times\Sigma)$ by 
\begin{equation}\label{eq:radius}
    \rho(\Psi(r,\sigma))=r.
\end{equation}
We then smoothly extend $\rho$ over the remainder of $M$ to be globally non-negative and positive outside $K$.  The AC condition implies that $\rho$ is equivalent to $d_g(.,p)$ outside a compact set, which leads to (a).  Since $|\nabla_Cr|=1$ and $|\Psi^*g-g_C|=O(r^{\nu})$ by \eqref{eq:AC}, we deduce that (b) is satisfied.  Finally, since $\Hess r^2=2g_C$ we can deduce (c) by again using \eqref{eq:AC}.  
\end{proof}


A direct consequence of Lemma \ref{lem:radius} is the following result, which is a restatement of Proposition \ref{prop:AC.E0}.

\begin{prop}\label{thm:noL^2.eigenfunctions}
Let $(M,g)$ be an AC Riemannian manifold.  Then $\mathcal{E}^0(\lambda)=0$ for all $\lambda\in\R$. 
\end{prop}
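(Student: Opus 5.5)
Since the paper calls this "a direct consequence of Lemma \ref{lem:radius}", the plan is to check that the radius function $\rho$ puts us in the setting of the classical theorems excluding embedded $L^2$ eigenvalues for the scalar Laplacian on manifolds with a "convex exhaustion", in the style of Kato and Karp (recall that \cite{Karp} is already used in the proof of Lemma \ref{lem:lambda.non.neg}). The cases are as follows. For $\lambda<0$ there is nothing to prove, by Lemma \ref{lem:lambda.non.neg}. For $\lambda=0$, an $L^2$ harmonic function on a complete manifold vanishes. One way to see this: $\d f\in L^2$ by Lemma \ref{lem:lambda.non.neg}, so \eqref{eq:lambda.non.neg} gives $\d f=0$, so $f$ is constant, and it vanishes since $\vol(M)=\infty$. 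The AC volume growth $\sim\rho^n$ gives infinite volume. So the substantive case is $\lambda>0$.

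For $\lambda>0$ I will use a Rellich/Morawetz-type virial identity with the vector field $X=\tfrac12\nabla\rho^2=\rho\nabla\rho$. By Lemma \ref{lem:radius}(c), $\nabla X=g+O(\rho^{\nu})$, so $\operatorname{div}X=n+O(\rho^\nu)$. Given $f\in\mathcal{E}^0(\lambda)$, smooth by Lemma \ref{lem:smooth}, with $\d f\in L^2$ by Lemma \ref{lem:lambda.non.neg}, I pair the equation $\Delta f=\lambda f$ with $\chi(X f+\tfrac{n-2}{2}f)$, where $\chi$ is a cut-off. Integrating by parts, the leading terms produce
\begin{equation*}
\int \chi\bigl(\lambda|f|^2\bigr)\approx\int\chi\bigl(|\d f|^2-\tfrac{1}{2}\langle(\nabla X-\tfrac12(\operatorname{div}X)g)\d f,\d f\rangle\cdot 2\bigr)+\text{errors},
\end{equation*}
more precisely the standard identity $\lambda\|f\|^2\cdot 0+\|\d f\|^2=\int\langle\nabla X(\nabla f),\nabla f\rangle+\ldots$. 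In flat space this yields $\int|\nabla f|^2\,(1-1)$ on the gradient part. I therefore expect to use Karp's refined version. That version first shows that $L^2$ eigenfunctions with $\lambda>0$ must decay faster than any polynomial, indeed $e^{\tau\rho}f\in L^2$ for all $\tau$. The mechanism is a Carleman/Agmon-type estimate with weight $e^{\tau\rho}$ and the above commutator. A unique continuation at infinity argument then forces $f\equiv0$ outside a compact set, and hence everywhere by unique continuation.

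Concretely, I will follow the argument of \cite{Karp} (or Kato's theorem as in Froese--Herbst), stated for a proper function $\rho$ with $|\nabla\rho|\to1$ and $\Hess\rho^2\to 2g$. Karp's hypotheses reduce exactly to (a)--(c) of Lemma \ref{lem:radius}. The rate $\nu<0$ ensures that the error terms are $o(1)$ relative to the main terms, which is all the argument needs. The main obstacle is the super-exponential decay step (the Froese--Herbst bootstrap). There the error terms in $\Hess\rho^2$ must be absorbed uniformly in the weight parameter $\tau$, and this is where the $O(\rho^\nu)$ decay is essential. My first attempt will be to quote Karp's theorem verbatim after verifying its hypotheses from Lemma \ref{lem:radius}, rather than redoing the Carleman estimate.
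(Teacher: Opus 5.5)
Your handling of $\lambda\le 0$ is correct. Your overall plan is also the paper's: feed the radius function $\rho$ of Lemma~\ref{lem:radius} into an existing Kato/Froese--Herbst-type theorem that excludes $L^2$ eigenvalues. The gap is the theorem you propose to quote.

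Karp's eigenvalue-exclusion result in \cite{Karp} (the paper only borrows the integrability argument on pp.~340--341 from it) is, as far as I know, a radial statement. It is phrased for the distance function from a pole, not for an arbitrary smooth proper function with $|\nabla\rho|\to 1$ and $\Hess\rho^2\to 2g$. So its hypotheses do not ``reduce exactly to (a)--(c)''. Replacing $\rho$ by a genuine distance function does not rescue this either:
\begin{itemize}
\item a general AC manifold has no pole (Eguchi--Hanson lives on $T^*\mathcal{S}^2$);
\item distance functions need not be smooth near infinity: already in the model $\R^4/\Z_2$, $d([p],[x])=\min(|x-p|,|x+p|)$ is singular along the hyperplane $\{x\cdot p=0\}$, which reaches infinity.
\end{itemize}
Kato's and Froese--Herbst's theorems are results on $\R^n$, so they do not apply verbatim on $M$ either.

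Your fallback of redoing the argument directly is not yet a proof. The displayed identities are not coherent. More importantly, as you observe, the Rellich/virial identity by itself produces no contradiction. What is needed is:
\begin{itemize}
\item the Froese--Herbst bootstrap to super-exponential decay, which uses positivity of the commutator at infinity and must absorb the $O(\rho^{\nu})$ errors uniformly in the weight;
\item a second Carleman-type step giving vanishing outside a compact set.
\end{itemize}
Neither step is carried out on the manifold, with cut-offs near $K$ and at large radius.

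The missing ingredient is exactly what the paper uses. Donnelly \cite{Donnelly} proves this exclusion for a smooth \emph{exhaustion function} rather than a distance function. The paper observes that $\rho$ has his Properties~4.1 by Lemma~\ref{lem:radius}, and his Corollary~5.3 then gives $\mathcal{E}^0(\lambda)=0$ for every $\lambda\in\R$ at once. With Donnelly's theorem in place of Karp's, your argument becomes the paper's, and your separate treatment of $\lambda\le 0$ is harmless but unnecessary.
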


\begin{proof}  The radius function in Lemma \ref{lem:radius} gives an exhaustion function (as defined in \cite{Donnelly}) with precisely the required properties \cite[Properties 4.1]{Donnelly} to apply \cite[Corollary 5.3]{Donnelly}, from which the result follows.
\end{proof}

We also observe the following, which is a restatement of Proposition \ref{prop:AC.Ek}.

\begin{prop}\label{prop:noL^2.eigenforms.flat}
    Let $(M,g)$ be an AC Riemannian manifold.  Then $\mathcal{E}^k(\lambda)=0$ for all $\lambda\neq 0$ and all $k$. 
\end{prop}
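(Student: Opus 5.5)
The plan is to deduce Proposition~\ref{prop:noL^2.eigenforms.flat} from the scattering calculus of Melrose, specifically the absence of embedded eigenvalues for the Hodge Laplacian on scattering (i.e.\ asymptotically conical) manifolds as established in \cite{CNTV}. First, we compactify $M$ radially: set $x=1/\rho$ near infinity, using the radius function $\rho$ of Lemma~\ref{lem:radius}. This produces a compact manifold $\overline{M}$ with boundary $\partial\overline{M}\cong\Sigma$. Near the boundary the cone metric takes the form $g_C=\frac{dx^2}{x^4}+\frac{g_\Sigma}{x^2}$, which is a scattering metric in Melrose's sense. By \eqref{eq:AC}, $\Psi^*g-g_C$ is $O(x^{-\nu})$ with all scattering derivatives. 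So $g$ is a scattering metric, up to a perturbation vanishing to positive (possibly non-integer) order at the boundary; the results of \cite{CNTV} are set up to allow this polyhomogeneous or conormal decay. The Hodge Laplacian acting on $\Lambda^k$ is then a scattering differential operator of order two. Its principal symbol and normal operator at the boundary coincide with those of the Euclidean Laplacian on forms in a local scattering frame, so it is a Laplace-type operator on a vector bundle with scalar leading behaviour at infinity.

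Next, we take $\alpha\in\mathcal{E}^k(\lambda)$ with $\lambda\neq0$. By Lemma~\ref{lem:lambda.non.neg} we have $\lambda>0$, so $\lambda$ lies in the continuous spectrum $(0,\infty)$. The key input is the statement that an $L^2$ solution of $(\Delta-\lambda)\alpha=0$, with $\lambda>0$, for a Laplace-type scattering operator on a bundle is rapidly decreasing at $\partial\overline{M}$. This follows from the structure of the resolvent or Poisson operator: generalised eigenfunctions have oscillatory leading terms $e^{\pm i\sqrt\lambda/x}x^{(n-1)/2}a_\pm$, and the $L^2$ condition kills both of these. One then applies a unique-continuation-at-infinity argument, a Carleman estimate in $x$ of the type in \cite{CNTV}, to conclude $\alpha\equiv0$ near infinity. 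Standard unique continuation for the elliptic operator $\Delta-\lambda$, whose principal symbol is scalar, then gives $\alpha\equiv0$ on all of $M$.

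The main obstacle is checking that the hypotheses of \cite{CNTV} genuinely cover the Hodge Laplacian on forms, not just Schr\"odinger operators on functions. In particular, the zeroth-order and first-order terms (the Weitzenb\"ock curvature term and the connection terms) must decay at infinity at the required scattering rates. Curvature is $O(\rho^{-2+\nu})$ relative to the cone and the cone's own curvature is $O(\rho^{-2})$, so the curvature term is $O(x^2)$ as a scattering operator, which is short-range. The connection terms are of the form $x\cdot(\text{scattering vector fields})$, which is the admissible lower-order structure. The first thing I would do is write $\Delta$ in the frame $x^2\partial_x$, $x\partial_{\sigma}$, $dx/x^2$, $d\sigma/x$ to confirm this explicitly. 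If the non-integer rate $\nu$ causes trouble, I would pass to the conormal (rather than smooth) scattering calculus, where the Carleman estimate argument is unchanged.
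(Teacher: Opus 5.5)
Your proposal is correct and follows essentially the same route as the paper. Both arguments use Lemma~\ref{lem:lambda.non.neg} to reduce to $\lambda>0$, then check that on an AC (scattering) manifold the Hodge Laplacian differs from a scalar Laplace-type operator only by a decaying Weitzenb\"ock term and decaying first-order frame/connection terms, so that \cite[Corollary 5.5]{CNTV} gives vanishing of $\alpha$ outside a compact set, and finish with unique continuation. The paper cites that corollary as a black box, whereas you split it into rapid-decay and Carleman steps.
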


\begin{proof}
We have already shown in Lemma \ref{lem:lambda.non.neg} that $\mathcal{E}^k(\lambda)=0$ for $\lambda<0$ for all $k$.  So assume that $\alpha\in\mathcal{E}^k(\lambda)$ for some $\lambda>0$ and some $k$.

We claim that the conditions of being AC and the form of the Hodge Laplacian mean that we can apply \cite[Corollary 5.5]{CNTV}, which relies on the scattering calculus (cf.~\cite{Melrose}) and considers eigenvalue problems $H\alpha=\lambda\alpha$ for appropriate perturbations $H$ of the rough Laplacian.  (Note that they consider sections of vector bundles in \cite{CNTV}.) 

First,  AC metrics are scattering metrics in the notation there. Second, we have that on forms
    \begin{equation}
        \Delta=\nabla^*\nabla+\mathcal{R},
    \end{equation}
  where $\mathcal{R}$ is an algebraic operator in the curvature of $(M,g)$.  Since $(M,g)$ is AC, $\mathcal{R}$ decays at infinity.    Recall the notation of Definition \ref{dfn:AC} and consider $\Psi(U\times (R,\infty))$ for some chart $U$ on $\Sigma$.  
  
  Choosing an orthonormal basis for the $k$-forms on $\Psi(U\times (R,\infty))$  and writing any $k$-form as a combination of these basis forms with coefficients $f_j$, we can then compare the rough Laplacian on $k$-forms with the scalar rough Laplacian on the $f_j$.  As $(M,g)$ is AC, the difference between rough Laplacians is a differential operator of order at most $1$ on the $f_j$ whose coefficients decay at infinity, and this holds for any chart $U$ on $\Sigma$. 
  
  Combining these observations with the decay of $\mathcal{R}$ means that we can take $H=\Delta$ in the notation of \cite[Corollary 5.5]{CNTV}.    This result states that solutions to $H\alpha=\lambda\alpha$ vanish outside a compact set on $M$. 
  Since $\alpha$ satisfies a linear elliptic equation, we deduce from unique continuation that $\alpha=0$ on $M$. 
\end{proof}

\begin{rmk}
It is relatively straightforward to see, using methods from separation of variables and ordinary differential equations, that there can be no eigenforms for $\Delta$ with non-zero eigenvalue on a Riemannian cone which lie in $L^2$, even when restricting to the complement of a neighbourhood of the vertex: see e.g.~\cite{Cheeger}.  This motivates why Propositions \ref{thm:noL^2.eigenfunctions} and \ref{prop:noL^2.eigenforms.flat} are true.  
\end{rmk}

\section{Eigenforms on gravitational instantons}\label{sec:grav.inst}

In this section we specialise to the case where $(M^4,g)$ is a gravitational instanton.  Our goal is to prove Theorem \ref{thm:grav.inst}(b), i.e.~that the non-existence of $L^2$ eigenfunctions implies the non-existence of $L^2$ eigenforms in any degree.

Before we begin, we recall that on an oriented Riemannian 4-manifold the 2-forms split into self-dual $\Omega^2_+(M)$ and anti-self-dual 2-forms $\Omega^2_-(M)$.  Since the Hodge star commutes with the Laplacian, $\Delta$ preserves the self-dual/anti-self-dual 2-forms, so we have splittings
\begin{equation}\label{eq:E2+-}
    \mathcal{E}^2(\lambda)=\mathcal{E}^2_+(\lambda)\oplus\mathcal{E}^2_-(\lambda).
\end{equation}

\subsection{Self-dual 2-forms}

An important fact about a hyperk\"ahler 4-manifold $(M^4,g)$ is that, given its natural orientation, its bundle of self-dual 2-forms $\Lambda^2_+T^*M$ is trivial and has three parallel everywhere orthogonal sections (of the same norm) $\omega_1,\omega_2,\omega_3$ defining the hyperk\"ahler structure.  In particular, $\omega_1,\omega_2,\omega_3$ are harmonic and are K\"ahler forms.  We begin with the following.

\begin{lem}\label{lem:hk.ALE.sd}   
If $(M^4,g)$ is complete hyperk\"ahler then $\mathcal{E}^2_+(\lambda)\neq 0$ if and only if $\mathcal{E}^0(\lambda)\neq 0$.
\end{lem}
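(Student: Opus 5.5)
The plan is to use the global parallel frame $\omega_1,\omega_2,\omega_3$ of $\Lambda^2_+T^*M$ to identify self-dual 2-forms with triples of functions, in a way that intertwines the Hodge Laplacian with the scalar Laplacian. Every $\alpha\in\Omega^2_+(M)$ can be written uniquely as
\begin{equation*}
\alpha=f_1\omega_1+f_2\omega_2+f_3\omega_3,
\end{equation*}
where $f_i=\langle\alpha,\omega_i\rangle/|\omega_i|^2$ and $|\omega_i|$ is a non-zero constant. Since the $\omega_i$ are orthogonal, we have $|\alpha|^2=c\sum_i f_i^2$ pointwise for some constant $c>0$. Hence $\alpha\in L^2$ if and only if every $f_i\in L^2$.

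The key step is the identity $\Delta(f\omega_i)=(\Delta f)\omega_i$ for any smooth function $f$. I would prove it with the Weitzenb\"ock formula on 2-forms, $\Delta=\nabla^*\nabla+\mathcal{R}$. The curvature term $\mathcal{R}$ annihilates $\Lambda^2_+$ on a hyperk\"ahler 4-manifold: the holonomy lies in $\mathrm{SU}(2)=\mathrm{Sp}(1)_-$, so the curvature acts trivially on $\Lambda^2_+$. Equivalently, $\mathcal{R}$ acts on $\Lambda^2_+$ through $W^+$ and scalar curvature, and both vanish because $\omega_i$ is parallel and Ricci-flatness holds.

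Given this, $\nabla^*\nabla(f\omega_i)=(\nabla^*\nabla f)\omega_i-2\nabla_{\nabla f}\omega_i+f\nabla^*\nabla\omega_i=(\Delta f)\omega_i$, since $\omega_i$ is parallel. Alternatively, as in the proof of Proposition~\ref{prop:functions.forms}, each $\omega_i$ is a K\"ahler form for a compatible complex structure, so the Lefschetz operator $L_{\omega_i}$ commutes with $\Delta$. Applying this to the function $f$ gives $\Delta(f\omega_i)=(\Delta f)\omega_i$ directly. I would use this second route, since it only relies on an argument already in the paper.

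With the identity in hand, both directions follow. If $0\neq f\in\mathcal{E}^0(\lambda)$, then $f\omega_1$ is self-dual, non-zero, lies in $L^2$, and satisfies $\Delta(f\omega_1)=\lambda f\omega_1$, so $\mathcal{E}^2_+(\lambda)\neq0$. Conversely, take $0\neq\alpha=\sum f_i\omega_i\in\mathcal{E}^2_+(\lambda)$. Then
\begin{equation*}
\sum_i(\Delta f_i-\lambda f_i)\omega_i=0,
\end{equation*}
and by pointwise linear independence of the $\omega_i$ each $f_i$ satisfies $\Delta f_i=\lambda f_i$. Each $f_i$ is smooth by Lemma~\ref{lem:smooth} and lies in $L^2$, and at least one of them is non-zero, so $\mathcal{E}^0(\lambda)\neq0$.

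The only point needing care, and the main obstacle, is justifying that $\Delta$ commutes with multiplication by $\omega_i$ on functions. The Lefschetz argument settles this, and it needs no integrability hypothesis on $\d f$: the result holds for every $\lambda$, including $\lambda=0$, and uses only completeness and the hyperk\"ahler structure.
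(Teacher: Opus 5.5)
Your proposal is correct and follows essentially the same route as the paper. You expand $\alpha$ in the parallel frame $\omega_1,\omega_2,\omega_3$, establish $\Delta(f\omega_j)=(\Delta f)\omega_j$, and read off both directions. The only difference is the justification of that identity. The paper notes that the Hodge and rough Laplacians agree on $\Lambda^2_+$ because the induced connection there is flat and trivial, which is your first (Weitzenb\"ock) route. You settle instead on the K\"ahler identity $[L_{\omega_j},\Delta]=0$, which is equally valid.
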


\begin{proof}
 Any $\alpha\in L^2\Omega^2_+(M)$ can be written uniquely as 
 \begin{equation}\label{eq:sd.decomp.1}
     \alpha=f_1\omega_1+f_2\omega_2+f_3\omega_3
 \end{equation}
 for functions $f_1,f_2,f_3\in L^2$ (since $\omega_1,\omega_2,\omega_3$ have constant norm).   
 Moreover, on $\Omega^2_+(M)$ we have that \eqref{eq:Delta.Rough} holds (the Hodge and rough Laplacians agree)  
 because the Levi-Civita connection on $\Lambda^2_+T^*M$ is flat and trivial.  Since $\omega_1,\omega_2,\omega_3$ are parallel, we deduce that
 \begin{equation}\label{eq:sd.decomp.2}
     \Delta\alpha= 
     \sum_{j=1}^3(\Delta f_j)\omega_j.
 \end{equation}
Combining \eqref{eq:sd.decomp.1} and \eqref{eq:sd.decomp.2} we see that  $\alpha\in\mathcal{E}^2_+(\lambda)$ if and only if $f_j\in\mathcal{E}^0(\lambda)$ for all $j$, which gives the result.
\end{proof}

\subsection{1-forms}

We now move on to eigen-1-forms but first note the following useful result, which shows when exact self-dual or anti-self-dual 2-forms are necessarily zero.

\begin{lem}\label{lem:hk.asd.exact}  Let $(M^4,g)$ be a gravitational instanton.  
If $\alpha\in L^2\Omega^1(M)$ and $\d\alpha\in L^2\Omega^2_{\pm}(M)$ then $\d\alpha=0$.
\end{lem}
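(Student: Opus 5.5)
Write $\beta=\d\alpha$, which lies in $L^2\Omega^2_\pm(M)$ by hypothesis. Since $*\beta=\pm\beta$, $\beta$ is closed and also co-closed: $\d^*\beta=\mp *\d*\beta\cdot(\pm1)$ is, up to sign, $*\d\beta=*\d\d\alpha=0$. Hence $\beta$ is an $L^2$ harmonic 2-form. The plan is to show that $\beta$ is $L^2$-orthogonal to itself, via the standard $L^2$ version of ``exact and coclosed implies zero'':
\begin{equation}
\|\beta\|_{L^2}^2=\langle \d\alpha,\beta\rangle_{L^2}=\langle \alpha,\d^*\beta\rangle_{L^2}=0 .
\end{equation}
The whole difficulty lies in justifying the middle integration by parts, since $M$ is non-compact.

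To justify it, I will use the same exhaustion $U_k$ and cut-off functions $f_k$ with $|\nabla f_k|_g\le 1$ that appear in the proofs of Lemma \ref{lem:lambda.non.neg} and Lemma \ref{lem:integrability.grav.inst}. These exist for every gravitational instanton: in the ALE/ALF/ALG/ALH cases via the A$T^m$C structure (after passing to a finite cover if necessary, or directly using the radial function on the model end), and in the ALG*/ALH* cases via the radial distance on the model space, as noted in the proof of Lemma \ref{lem:integrability.grav.inst}.

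The key step is the following computation. Since $f_k^2\beta$ is compactly supported and $\d\alpha\wedge *\beta=\pm\, \d\alpha\wedge\beta$, Stokes' theorem together with $\d\beta=0$ gives
\begin{equation}
\int_M f_k^2\,\d\alpha\wedge\beta=\int_M \d(f_k^2\alpha\wedge\beta)-\int_M 2f_k\,\d f_k\wedge\alpha\wedge\beta=-2\int_M f_k\,\d f_k\wedge\alpha\wedge\beta .
\end{equation}
The right-hand side is bounded by $2\int_{\operatorname{supp}\d f_k}|\alpha||\beta|$. The support of $\d f_k$ lies in $U_{k+1}\setminus U_k$, which escapes to infinity, and $|\alpha||\beta|\in L^1$ by Cauchy--Schwarz because $\alpha,\beta\in L^2$. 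Dominated convergence therefore sends this term to $0$. Meanwhile the left-hand side tends to $\pm\|\beta\|_{L^2}^2$, again by dominated convergence. Hence $\beta=0$.

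The main obstacle is only the existence of the uniformly Lipschitz exhaustion functions, which the paper has already addressed. Smoothness of $\alpha$ is not available a priori, so I would interpret $\d\alpha$ weakly, or mollify. Alternatively, since $\beta$ is harmonic and therefore smooth, the Stokes step can be run with $\alpha$ only in $L^2_{\rm loc}$ by pairing with the smooth compactly supported form $f_k^2\beta$ in the distributional sense. Specifically, $\int f_k^2\,\d\alpha\wedge\beta$ equals $\pm\int\alpha\wedge \d(f_k^2\beta)=\pm\int 2f_k\,\alpha\wedge\d f_k\wedge\beta$, which is the same identity as above. Notably, the hyperkähler condition is not really needed beyond completeness and the existence of the cut-offs.
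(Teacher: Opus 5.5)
Your proof is correct and follows the paper's route. Both arguments reduce to $\|\d\alpha\|_{L^2}^2=\pm\int_M \d(\alpha\wedge\d\alpha)$ and kill the boundary term at infinity using only $\alpha,\d\alpha\in L^2$; where the paper simply cites Gaffney's Stokes theorem for complete manifolds, you prove the needed vanishing directly with the cut-offs $f_k$ (which is how Gaffney's result is proved) and also handle the regularity of $\alpha$ distributionally.
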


\begin{proof}
The idea is to   use integration by parts or Stokes' theorem with boundary at infinity.  The key point is that the condition that $\alpha,\d\alpha\in L^2$ ensures the boundary term at infinity vanishes, since 
\begin{equation}\label{eq:hk.asd.1}
    \|\d\alpha\|_{L^2}^2=\int_M\d\alpha\wedge *\d\alpha=\pm\int_M\d\alpha\wedge\d\alpha=\pm\int_M\d(\alpha\wedge\d\alpha).
\end{equation}
Concretely, since \begin{equation}
\alpha\wedge\d\alpha\in L^1\end{equation} and 
\begin{equation}
\d(\alpha\wedge\d\alpha)=\d\alpha\wedge\d\alpha\in L^1
\end{equation}we may apply Stokes' theorem for complete manifolds as in \cite{Gaffney} to deduce that the integral on the right-hand side of \eqref{eq:hk.asd.1} is zero.  Hence
\begin{equation}
    \|\d\alpha\|_{L^2}^2=0,
\end{equation}
which gives the result.
%
\end{proof}

\begin{prop}\label{prop:hk.ALE.1forms} Let $(M^4,g)$ be a gravitational instanton.  If $\mathcal{E}^0(\lambda)=0$ for $\lambda\neq 0$ then $\mathcal{E}^1(\lambda)=0$.
\end{prop}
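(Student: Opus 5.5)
Let $\alpha\in\mathcal{E}^1(\lambda)$ with $\lambda\neq 0$. The plan is to split $\alpha$ into an exact part and a co-exact part, and to kill each part separately. By Lemma \ref{lem:integrability.grav.inst}, $\d\alpha,\d^*\alpha\in L^2$ and $\lambda>0$. Put $f=\lambda^{-1}\d^*\alpha$, a function in $L^2$. Since $\d^*$ commutes with $\Delta$, we have $\Delta f=\lambda^{-1}\d^*\Delta\alpha=\lambda f$, so $f\in\mathcal{E}^0(\lambda)$. The hypothesis $\mathcal{E}^0(\lambda)=0$ then gives $\d^*\alpha=0$. Consequently $\lambda\alpha=\Delta\alpha=\d^*\d\alpha$.

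Next consider $\beta=\d\alpha\in L^2\Omega^2$. It satisfies $\Delta\beta=\d\Delta\alpha=\lambda\beta$, so $\beta\in\mathcal{E}^2(\lambda)$. Using the splitting \eqref{eq:E2+-}, write $\beta=\beta_++\beta_-$ with $\beta_\pm\in\mathcal{E}^2_\pm(\lambda)$; each component lies in $L^2$ because $\Delta$ commutes with $*$.

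By Lemma \ref{lem:hk.ALE.sd}, the hypothesis $\mathcal{E}^0(\lambda)=0$ forces $\mathcal{E}^2_+(\lambda)=0$, so $\beta_+=0$. Hence $\d\alpha=\beta_-\in L^2\Omega^2_-(M)$ with $\alpha\in L^2$. Lemma \ref{lem:hk.asd.exact} then gives $\d\alpha=0$. We conclude $\lambda\alpha=\d^*\d\alpha=0$, so $\alpha=0$ because $\lambda\neq0$.

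The step needing most care is the decomposition of $\d\alpha$ together with the integrability used in Lemma \ref{lem:hk.asd.exact}. We need $\alpha\wedge\d\alpha\in L^1$, which follows from $\alpha,\d\alpha\in L^2$ by Cauchy--Schwarz; this is exactly why Lemma \ref{lem:integrability.grav.inst} is invoked first. We also use that $\d\alpha$ is smooth (Lemma \ref{lem:smooth}), so the identities $\d^*\Delta=\Delta\d^*$ and $\d\Delta=\Delta\d$ hold pointwise. Finally, the orientation must be the hyperkähler one, so that the trivial bundle is $\Lambda^2_+$; the argument is symmetric in the sign in Lemma \ref{lem:hk.asd.exact}, so this causes no problem.
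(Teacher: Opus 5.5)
Your proposal is correct and follows the paper's proof essentially step by step: you kill $\d^*\alpha$ using $\mathcal{E}^0(\lambda)=0$, kill the self-dual part of $\d\alpha$ via Lemma \ref{lem:hk.ALE.sd}, and then kill the remaining anti-self-dual exact form via Lemma \ref{lem:hk.asd.exact}. One small slip in your justification: $\beta_\pm=\tfrac12(\beta\pm *\beta)$ lies in $L^2$ because $*$ is a pointwise isometry, whereas the fact that $\Delta$ commutes with $*$ is what makes each $\beta_\pm$ an eigenform.
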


\begin{proof}  Let $\alpha\in \mathcal{E}^1(\lambda)$.  
 We first note that by Lemma \ref{lem:integrability.grav.inst} we have $\d^*\alpha,\d\alpha\in L^2$.  We then see that
 \begin{equation}\label{eq:Delta.d*}
      \Delta \d^*\alpha=\d^*\Delta\alpha=\lambda\d^*\alpha.
 \end{equation}
 Hence $\d^*\alpha\in \mathcal{E}^0(\lambda)=0$ by hypothesis.

 We also see that 
\begin{equation}
     \d_+\alpha=\tfrac{1}{2}(\d\alpha+*\d\alpha)\in L^2\Omega^2_+(M).
\end{equation}
 Again we have that
\begin{equation}
    \Delta \d_+\alpha=\d_+\Delta\alpha=\lambda\d_+\alpha
\end{equation}
and so, by Lemma \ref{lem:hk.ALE.sd}, we know that $\d_+\alpha=0$. We then observe that
\begin{equation}
    \d\alpha=\tfrac{1}{2}(\d\alpha-*\d\alpha)=\d_-\alpha\in L^2\Omega^2_-(M).
\end{equation}
We may deduce that $\d\alpha=0$ from Lemma \ref{lem:hk.asd.exact}.

Given that $\d\alpha=0=\d^*\alpha$, we have that $\Delta\alpha=0$, but as $\lambda\neq 0$ this forces $\alpha=0$ as claimed.
\end{proof}

\subsection{Anti-self-dual 2-forms}

We may now conclude our study of eigenforms on gravitational instantons by considering anti-self-dual eigen-2-forms.

\begin{lem}\label{lem:hk.ALE.asd} Let $(M^4,g)$ be a gravitational instanton.  If $\mathcal{E}^0(\lambda)=0$ for $\lambda\neq 0$ then $\mathcal{E}^2_-(\lambda)=0$. 
\end{lem}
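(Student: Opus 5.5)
Let $\beta\in\mathcal{E}^2_-(\lambda)$ with $\lambda\neq 0$. The plan is to show that $\d^*\beta=0$ using Proposition \ref{prop:hk.ALE.1forms}, and then to conclude that $\beta$ is harmonic, which is impossible for $\lambda\neq 0$ unless $\beta=0$.

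First, by Lemma \ref{lem:integrability.grav.inst} we have $\d\beta,\d^*\beta\in L^2$. For an anti-self-dual 2-form, $*\beta=-\beta$, so
\begin{equation}
\d^*\beta=-*\d*\beta=*\d\beta .
\end{equation}
Hence $\d\beta=0$ if and only if $\d^*\beta=0$, and $|\d\beta|=|\d^*\beta|$ pointwise. It therefore suffices to show $\gamma:=\d^*\beta=0$.

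Next, $\gamma\in L^2\Omega^1(M)$ and, since $\d^*$ commutes with $\Delta$,
\begin{equation}
\Delta\gamma=\d^*\Delta\beta=\lambda\gamma .
\end{equation}
So $\gamma\in\mathcal{E}^1(\lambda)$. By hypothesis $\mathcal{E}^0(\lambda)=0$, so Proposition \ref{prop:hk.ALE.1forms} gives $\mathcal{E}^1(\lambda)=0$, and hence $\gamma=0$.

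Then $\d^*\beta=0$ and $\d\beta=*\d^*\beta$ up to sign, so $\d\beta=0$ as well. This gives $\lambda\beta=\Delta\beta=0$, and since $\lambda\neq0$ we get $\beta=0$.

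The only delicate point is the $L^2$ integrability of $\d^*\beta$, which is needed so that $\gamma$ really lies in $\mathcal{E}^1(\lambda)$. This is exactly what Lemma \ref{lem:integrability.grav.inst} provides. The hyperkähler structure enters only through Proposition \ref{prop:hk.ALE.1forms}, which relies on Lemma \ref{lem:hk.ALE.sd} and Lemma \ref{lem:hk.asd.exact}. No further analysis should be needed, so I do not expect a genuine obstacle beyond checking the sign conventions in the identity $\d^*\beta=*\d\beta$.
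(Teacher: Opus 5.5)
Your proposal is correct and follows the paper's proof essentially line for line: you use Lemma \ref{lem:integrability.grav.inst} to place $\d^*\beta$ in $\mathcal{E}^1(\lambda)$, Proposition \ref{prop:hk.ALE.1forms} to kill it, and the identity $\d^*\beta=*\d\beta$ for anti-self-dual $\beta$ to get $\d\beta=0$. Your sign check is also right: $\d^*=-*\d*$ on $2$-forms in dimension $4$, so $\beta$ is harmonic and hence vanishes since $\lambda\neq0$.
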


\begin{proof}  Let $\alpha\in\mathcal{E}^2_-(\lambda)$.  
By Lemma \ref{lem:integrability.grav.inst} we know that $\d^*\alpha\in L^2\Omega^1(M)$ and by \eqref{eq:Delta.d*} we know that $\d^*\alpha$ is a $\lambda$-eigenform for $\Delta$.  By Proposition \ref{prop:hk.ALE.1forms} we have that $\d^*\alpha=0$.  Moreover, since $\alpha\in\Omega^2_-(M)$, we know that
\begin{equation}
    \d^*\alpha=-*\d*\alpha=*\d\alpha.
\end{equation}
Hence, $\d\alpha=0$ as well.  Thus $\Delta\alpha=0$ but as $\lambda\neq 0$ this means that $\alpha=0$.
\end{proof}

\begin{rmk}
Combining Lemma \ref{lem:hk.ALE.sd}, Proposition \ref{prop:hk.ALE.1forms} and Lemma \ref{lem:hk.ALE.asd}, together with the fact that the Hodge star and $\Delta$ commute, yields Theorem \ref{thm:grav.inst}(b).
\end{rmk}

\section{Functions on ALF gravitational instantons}\label{sec:funct.ALF}

In this section we prove Theorem \ref{thm:An.ALF}, i.e.~that there are no $L^2$ eigenfunctions on $A_{n}$ ALF gravitational instantons (also known as multi-Taub--NUT spaces).  A key tool will be a Fourier decomposition and so, for ease of exposition, we will assume that our eigenfunctions are complex-valued.

\subsection{Multi-Taub--NUT}

We briefly recall the description of $A_{n}$ ALF gravitational instantons, equivalently multi-Taub--NUT spaces, via the Gibbons--Hawking ansatz \cite{GibbonsHawking,Minerbe2011}.  

Let $x_0,\ldots,x_n\in\R^3$ be distinct and let $m>0$.  Define
\begin{equation}\label{eq:V}
        V(x)=m+\sum_{j=0}^n\frac{1}{2|x-x_j|},
        \qquad
        x\in X_0=\R^3\setminus\{x_0,\ldots,x_n\}.
\end{equation}
Then there is a principal $\mathrm{U}(1)$-bundle $\pi:M_0\to X_0$ with connection 1-form $\eta$ satisfying
\begin{equation}\label{eq:curvature}
        \dd\eta=-\pi^*(*_{\R^3}\dd V).
\end{equation}
The metric on $M_0$ given by
\begin{equation}\label{eq:metric}
        g=V^{-1}\eta^2+V g_{\R^3},
\end{equation}
 where $g_{\R^3}$ is the Euclidean metric on $\R^3$, 
extends after adding one point above each centre $x_j\in \R^3$ to a   complete hyperk\"ahler metric on $M\cong M_0\cup\{x_0,\ldots,x_n\}$. 

\begin{dfn}\label{dfn:An.ALF}
We define $(M,g)$ to be the \emph{multi-Taub--NUT} space or \emph{$A_{n}$ ALF gravitational instanton} with mass $m>0$ and centres $x_0,\ldots,x_n$.    
\end{dfn}

\begin{eg}
The Taub--NUT metric is defined on $\R^4$, which arises from taking $n=1$ (and $x_1=0$ without loss of generality) in Definition \ref{dfn:An.ALF}.
\end{eg}

\begin{rmk}  Throughout the rest of the section we will assume that we are working on an $A_{n}$ ALF gravitational instanton $(M,g)$ as given by Definition \ref{dfn:An.ALF} and use the notation above, particularly $V$ and $\eta$.  Notice that rescaling $(M,g)$ will scale the centres $x_j$ and the mass $m$.  Therefore, by scaling we can assume that  $m=1$, which we do from now on.
\end{rmk}

For later use, we note that the volume form of $g$ is:
\begin{equation}\label{eq:vol.V}
\vol_g=V\eta\wedge\vol_{\R^3}.
\end{equation}

\begin{rmk}
The Atiyah--Hitchin metric and its double cover are asymptotic to a Taub--NUT geometry with ``negative mass'' in the sense that, in the notation above, the corresponding Gibbons--Hawking potential has the asymptotic form $V=1-\frac{k}{2|x|}$ for some $k>0$.  It is this negative mass behaviour at infinity which seems to be instrumental in allowing for the existence of non-zero $L^2$ eigenfunctions, cf.~\cite{Schroers3, GibbonsManton}.
\end{rmk}

\subsection{Fourier decomposition and the reduced equation}

The multi-Taub--NUT space $(M,g)$ admits a circle action  generated by a vertical (for the fibration $\pi:M\to\R^3$) vector field $\xi$ dual to $\eta$ (namely, $\eta(\xi) = 1)$. In local fibre coordinates $\psi$ of period $2\pi$ we have that $\xi=\partial_\psi$. 

We can then decompose any complex-valued $L^2$ function $f$ on $(M,g)$ into (vertical) Fourier modes $f_s$ indexed by $s\in\Z$, which then satisfy:
\begin{equation}\label{eq:circle.weight}
    -\ii\Lie_\xi f_s=sf_s.
\end{equation}

The circle action generated by $\xi$ is isometric, so commutes with the Laplacian $\Delta$.  Hence, for all $\lambda\in\R$ we have an orthogonal decomposition 
\begin{equation}\label{eq:E0s}
    \mathcal{E}^0(\lambda)=\widehat{\bigoplus}_{s\in\Z}\mathcal{E}^0_s(\lambda)
\end{equation}
where each $f_s\in\mathcal{E}^0_s(\lambda)$ satisfies \eqref{eq:circle.weight}.  We now wish to dimensionally reduce the eigenvalue equation satisfied by $f_s$ to an equation on the subset $X_0$ of $\R^3$.

\begin{rmk}
    The following dimensional reduction using the circle action is not possible for $D_n$ type ALF gravitational instantons as they do not admit such a global action.  However, Atiyah--Hitchin and its double cover have circle actions at infinity, and there are $D_n$ ALF metrics with \emph{approximate} circle actions at infinity since they are asymptotic to $A_n$ ALF metrics  (cf.~\cite{SchroersSinger}), so some sort of dimensional reduction at infinity may be possible in these settings. 
\end{rmk}

In a local trivialisation of $\pi:M_0\to X_0$ we can find a 1-form $A$ on (an open subset of) $X_0$ so that
\begin{equation}\label{eq:localA}
        \eta=\dd\psi+A,
        \qquad
        \dd A=-*_{\R^3}\dd V.
\end{equation}
A function $f_s$ satisfying \eqref{eq:circle.weight} can be written in this trivialisation as
\begin{equation}\label{eq:fs.u}
        f_s(x,\psi)=e^{\ii s\psi}u(x).
\end{equation}
Globally, $u$ is not necessarily a scalar function, but it is a section of a   Hermitian line bundle $L^s\to X_0$ associated to the circle action with weight $s$: specifically, $L^s=L^{\otimes s}$ where $L$ is the line bundle associated  to the $\mathrm{U}(1)$-bundle $\pi:M_0\to X_0$.  (We take the usual convention that $L^0$ is the trivial line bundle and choosing negative $s$ amounts to taking tensor powers of the line bundle dual to $L$.)  If we set
\begin{equation}\label{eq:P}
        P=-\ii\nabla_{\R^3}
\end{equation}
then we see that the local formula
\begin{equation}\label{eq:mag.mom}
    P_s=P-sA
\end{equation}
 is gauge-covariant and hence defines a global operator on sections of $L^s$.  In fact, $P_s=-\ii\nabla_s$ where $\nabla_s$ is the covariant derivative on $L^s$.

 \begin{dfn}\label{dfn:Ps} Recall $X_0$ as in \eqref{eq:V} and let $L^s\to X_0$ be the Hermitian line bundle associated to the circle action defined by $\xi$ with weight $s$ as above.  We then let \begin{equation}
     P_s=-\ii\nabla_s:\Gamma(L^s)\to\Gamma(T^*X_0\otimes L^s)
     \end{equation} 
     be the \emph{magnetic momentum} operator, which is given by \eqref{eq:mag.mom} in a given local trivialisation of $\pi:M_0\to X_0$.  We also let $P_s^*$ denote the formal adjoint of $P_s$.
 \end{dfn}

With this definition in hand, we can derive our reduced equation for eigenfunctions $f_s\in\mathcal{E}^0_s(\lambda)$.

\begin{lem}
\label{lem:reduction}  Recall the notation of \eqref{eq:E0s} and Definition \ref{dfn:Ps}. 
Let $f_s\in\mathcal{E}^0_s(\lambda)$ and let $u\in\Gamma(L^s)$ be the associated section to $f_s$ via \eqref{eq:fs.u}. Then $u$ satisfies  
\begin{equation}\label{eq:raw}
        \bigl(P_s^*P_s+s^2V^2-\lambda V\bigr)u=0.
\end{equation}

\end{lem}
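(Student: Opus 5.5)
The plan is to compute the Laplacian of the Gibbons--Hawking metric \eqref{eq:metric} on functions of the form \eqref{eq:fs.u} directly, in a local trivialisation, and then observe that the result is gauge-covariant, so it gives the global equation \eqref{eq:raw}. Since $\Delta f_s=\lambda f_s$ is a local statement, it suffices to work on an open set $U\subseteq X_0$ over which $\eta=\dd\psi+A$ as in \eqref{eq:localA}.

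\emph{Step 1: dual frame.} I will use the coframe $\{V^{-1/2}\eta, V^{1/2}\dd x_1,V^{1/2}\dd x_2,V^{1/2}\dd x_3\}$, which is orthonormal for $g$. The dual vector fields are $V^{1/2}\partial_\psi$ and $V^{-1/2}(\partial_{x_i}-A_i\partial_\psi)$; note that $\dd\psi+A$ annihilates $\partial_{x_i}-A_i\partial_\psi$. Hence the inverse metric is
\begin{equation}
g^{-1}=V\,\partial_\psi\otimes\partial_\psi+V^{-1}\sum_i(\partial_{x_i}-A_i\partial_\psi)\otimes(\partial_{x_i}-A_i\partial_\psi),
\end{equation}
and the volume form is $\vol_g=V\,\dd\psi\wedge\vol_{\R^3}$ by \eqref{eq:vol.V}.

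\emph{Step 2: divergence form.} With the geometer's sign convention,
\begin{equation}
\Delta f=-\tfrac{1}{V}\,\mathrm{div}_{\mathrm{coord}}\bigl(V g^{-1}\dd f\bigr).
\end{equation}
I will write this out in the coordinates $(\psi,x)$, where the coordinate density is $V$. The two pieces are as follows.
\begin{itemize}
\item The vertical piece is $-\tfrac1V\partial_\psi(V\cdot V\partial_\psi f)=-V\partial_\psi^2 f$, because $V$ does not depend on $\psi$.
\item The horizontal piece is $-\tfrac1V\sum_i D_i\bigl(V\cdot V^{-1}D_i f\bigr)=-\tfrac1V\sum_i D_iD_i f$, where $D_i=\partial_{x_i}-A_i\partial_\psi$. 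For this I must check that the coordinate divergence of the field $\sum_i h_i D_i$ equals $\sum_i D_i h_i$. This holds because $\partial_\psi(A_ih_i)=A_i\partial_\psi h_i$, since $A$ is $\psi$-independent.
\end{itemize}
This is the one place that needs care, namely checking that the $\psi$-components of the horizontal fields contribute correctly.

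\emph{Step 3: substitute the mode.} Take $f=e^{\ii s\psi}u(x)$. Then $\partial_\psi f=\ii s f$ and $D_i f=e^{\ii s\psi}(\partial_i-\ii sA_i)u=e^{\ii s\psi}\,\ii(P_s)_iu$. Substituting gives
\begin{equation}
\Delta f=e^{\ii s\psi}\Bigl(V s^2 u+\tfrac1V\sum_i(P_s)_i(P_s)_iu\Bigr).
\end{equation}
On flat $\R^3$ we have $P_s^*=\sum_i (P_s)_i$ acting on components, because the $A_i$ are real. So $\sum_i(P_s)_i(P_s)_i=P_s^*P_s$.

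\emph{Step 4: conclude.} The eigenvalue equation $\Delta f=\lambda f$ becomes $V^{-1}P_s^*P_su+s^2Vu=\lambda u$. Multiplying by $V>0$ gives \eqref{eq:raw} on $U$. Under a change of trivialisation $\psi\mapsto\psi+\chi$, we have $A\mapsto A-\dd\chi$ and $u\mapsto e^{\ii s\chi}u$. Both sides of \eqref{eq:raw} transform covariantly under this change, as already noted after \eqref{eq:mag.mom}. Therefore the local equations patch together to a global identity for $u\in\Gamma(L^s)$ on $X_0$.

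The main obstacle is the bookkeeping in Step 2, that is, getting the correct inverse metric and the correct divergence of the horizontal fields; everything after that is substitution.
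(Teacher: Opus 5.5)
Your computation is correct. The coframe, the density $V$ in the coordinates $(\psi,x)$, the divergence check for the horizontal fields $D_i$ (which uses only that $A$ is $\psi$-independent), and the identity $\sum_i(P_s)_i(P_s)_i=P_s^*P_s$ all hold. Together they give the pointwise identity $\Delta(e^{\ii s\psi}u)=e^{\ii s\psi}\bigl(V^{-1}P_s^*P_s+s^2V\bigr)u$, from which \eqref{eq:raw} is immediate.

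The paper reaches the same equation by a weak route rather than a strong one. It computes only $|\dd f_s|_g^2=V^{-1}|(\nabla_{\R^3}-\ii sA)u|^2+s^2V|u|^2$ and $\vol_g$, and integrates over the fibres to get \eqref{eq:norm} and \eqref{eq:form}. It then reads off \eqref{eq:raw} as the Euler--Lagrange equation of $\int(|P_su|^2+s^2V^2|u|^2)$ with respect to the weighted norm $\int V|u|^2$, by testing against compactly supported sections in a trivialisation. That route never needs the divergence of $D_i$: the $\partial_\psi$-components become multiplication by $\ii s$ before any integration by parts, which then happens only on $\R^3$. It also produces the integrated identities \eqref{eq:norm}--\eqref{eq:form}, which the paper reuses in Lemma \ref{lem:non.pos.Esl} and for the finite magnetic energy \eqref{eq:mag.finite}. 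Your route instead gives an explicit operator formula for $\Delta$ on each Fourier mode and avoids the paper's somewhat informal testing step.

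Two small points. You should invoke Lemma \ref{lem:smooth} so that $\Delta f_s=\lambda f_s$ holds classically. There is also a sign slip in your gauge transformation. If $\psi\mapsto\psi+\chi$ and $A\mapsto A-\dd\chi$, then $u\mapsto e^{-\ii s\chi}u$, not $e^{\ii s\chi}u$; with your sign, $P_su$ would pick up an extra term $2s\,\dd\chi\,u$ and fail to be covariant. With the correct sign, $P_su\mapsto e^{-\ii s\chi}P_su$ and the patching goes through as you describe.
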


\begin{proof}
Let $f_s$ satisfy \eqref{eq:circle.weight}.  Then $f_s$ is smooth by Lemma \ref{lem:smooth}.  Suppose that $f_s$ is compactly supported in $M_0$ in one local trivialisation of $\pi:M_0\to X_0$, where we use the notation \eqref{eq:localA}. Then we can  write $f_s(x,\psi)=e^{\ii s\psi}u(x)$.      
 Using the inverse of the metric \eqref{eq:metric}, formula \eqref{eq:mag.mom} and this notation gives
\begin{equation}
        |\dd f_s|_g^2
        =V^{-1}|(\nabla_{\R^3}-\ii sA)u|_{\R^3}^2+Vs^2|u|_{\R^3}^2.
\end{equation}
Using \eqref{eq:vol.V}, the volume form of $g$ in this trivialisation is 
\begin{equation}
    \vol_g=V\dd\psi\wedge\vol_{\R^3},
\end{equation}  
so integrating over the fibres and using \eqref{eq:mag.mom} gives   
\begin{align}
        \norm{f_s}_{L^2}^2
        &=2\pi\int_{X_0}V|u|_{\R^3}^2\vol_{\R^3},\label{eq:norm}\\
        \ip{f_s}{\Delta f_s}_{L^2}=\norm{\d f_s}_{L^2}^2
        &=2\pi\int_{X_0}\bigl(|P_su|_{\R^3}^2+s^2V^2|u|_{\R^3}^2\bigr)\vol_{\R^3}.\label{eq:form}
\end{align}
Equation \eqref{eq:raw} follows for $f_s$ supported in one trivialisation by testing against sections compactly supported in the same trivialisation.  We can then use gauge covariance and the fact that any eigenfunction $f_s$ is smooth to patch together to obtain the equation \eqref{eq:raw} globally.
\end{proof}

Note  the elementary algebraic identity: 
\begin{equation}
      s^2V^2-\lambda V
        =\bigl(s^2(V^2-1)-\lambda(V-1)\bigr)-(\lambda-s^2).  
\end{equation}
If we then let
    \begin{align}
\label{eq:h}
 h(x)&=V(x)-1=\sum_{j=0}^n\frac{1}{2|x-x_j|},\qquad\quad\! \text{for $x\in X_0$,}\\
\label{eq:E}
        E_{s,\lambda}&=\lambda-s^2,\\
\label{eq:W}
        W_{s,\lambda}&=s^2(V^2-1)-\lambda(V-1)=(2s^2-\lambda)h+s^2h^2,\\
\label{eq:Hs}   H_{s,\lambda}&=P_s^*P_s+W_{s,\lambda},
\end{align}
we obtain the following useful result, which gives a reformulation of \eqref{eq:raw}.

\begin{lem} We have that $u\in\Gamma(L^s)$ satisfies \eqref{eq:raw} if and only if
\begin{equation}\label{eq:reduced}
        H_{s,\lambda}u=E_{s,\lambda}u.
\end{equation}
\end{lem}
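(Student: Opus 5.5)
The claim is purely algebraic, so I will prove it by rewriting the zeroth-order coefficient of \eqref{eq:raw} and comparing it with the definitions \eqref{eq:h}--\eqref{eq:Hs}. No analysis is involved: both sides are differential operators acting on the same space $\Gamma(L^s)$. They share the principal part $P_s^*P_s$, so the equivalence reduces to an identity between multiplication operators by smooth real functions on $X_0$.

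\textbf{Step 1: the identity for $W_{s,\lambda}$.} By \eqref{eq:V} with our normalisation $m=1$, we have $V=1+h$ on $X_0$, with $h$ as in \eqref{eq:h}. Hence $V^2-1=2h+h^2$ and $V-1=h$. This gives
\begin{equation}
s^2(V^2-1)-\lambda(V-1)=2s^2h+s^2h^2-\lambda h=(2s^2-\lambda)h+s^2h^2,
\end{equation}
which confirms the second equality in \eqref{eq:W}.

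\textbf{Step 2: the splitting of the potential.} Expanding the right-hand side of the displayed elementary identity gives $s^2V^2-s^2-\lambda V+\lambda-\lambda+s^2=s^2V^2-\lambda V$. Therefore
\begin{equation}
s^2V^2-\lambda V=W_{s,\lambda}-E_{s,\lambda},
\end{equation}
by \eqref{eq:E} and \eqref{eq:W}.

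\textbf{Step 3: conclusion.} Substituting Step 2 into \eqref{eq:raw} gives
\begin{equation}
\bigl(P_s^*P_s+s^2V^2-\lambda V\bigr)u=\bigl(P_s^*P_s+W_{s,\lambda}\bigr)u-E_{s,\lambda}u=H_{s,\lambda}u-E_{s,\lambda}u,
\end{equation}
using \eqref{eq:Hs}. So the left-hand side of \eqref{eq:raw} vanishes if and only if \eqref{eq:reduced} holds, pointwise on $X_0$. The multiplication operators are well defined and gauge-invariant on sections of $L^s$, since $V$ and $h$ are real-valued functions on $X_0$; hence the identity holds globally and not just in a local trivialisation.

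There is no genuine obstacle here. The only point needing care is that the normalisation $m=1$ fixed earlier is what makes $V-1$ equal to $h$, so I will state explicitly that this scaling is being used.
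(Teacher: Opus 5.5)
Your proof is correct and is essentially the paper's argument: the paper simply records the identity $s^2V^2-\lambda V=\bigl(s^2(V^2-1)-\lambda(V-1)\bigr)-(\lambda-s^2)$ and reads the lemma off from the definitions \eqref{eq:h}--\eqref{eq:Hs}, exactly as in your Steps 2--3. One small refinement: the normalisation $m=1$ is only needed for the second expression for $W_{s,\lambda}$ in terms of $h$, not for the equivalence itself, which holds with $W_{s,\lambda}=s^2(V^2-1)-\lambda(V-1)$ for any $m$.
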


\begin{rmk}
    Equation \eqref{eq:reduced} is of the form of an eigenvalue equation for a magnetic Schr\"odinger operator $H_{s,\lambda}$, with potential $W_{s,\lambda}$.  The solution $u$ has associated eigenvalue (or energy) $E_{s,\lambda}$.   
\end{rmk}

\subsection{Non-positive energy}

As we indicated above, we have viewed \eqref{eq:reduced} as an eigenvalue equation for a magnetic Schr\"odinger operator, whose eigenvalue or energy is $E_{s,\lambda}$.  In line with its interpretation as an energy, we would expect that there will be no non-trivial solutions with non-positive energy.  We now show that this is the case.

\begin{lem}\label{lem:non.pos.Esl}
 Recall   \eqref{eq:E0s} and \eqref{eq:E}.   We have that $\mathcal{E}^0_{s}(\lambda)=\{0\}$ whenever $E_{s,\lambda}=\lambda-s^2\leq 0$.
\end{lem}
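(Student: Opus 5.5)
Since $\mathcal{E}^0_s(\lambda)\subseteq\mathcal{E}^0(\lambda)$, it suffices to show that any $f_s\in\mathcal{E}^0_s(\lambda)$ vanishes when $\lambda\le s^2$. The plan is a direct energy identity using the integrated formulas \eqref{eq:norm} and \eqref{eq:form} from the proof of Lemma \ref{lem:reduction}, with no pointwise analysis of \eqref{eq:reduced}.

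The first step is integrability. By Lemma \ref{lem:integrability.grav.inst}, since $(M,g)$ is a gravitational instanton, $\d f_s\in L^2$ and
\begin{equation*}
\lambda\|f_s\|_{L^2}^2=\|\d f_s\|_{L^2}^2 .
\end{equation*}
This is valid globally, not only for $f_s$ supported in one trivialisation. The computation $|\d f_s|_g^2=V^{-1}|P_su|^2+Vs^2|u|^2$ is pointwise and gauge-invariant, and the points above the centres have measure zero. Hence \eqref{eq:norm} and \eqref{eq:form} hold for any $f_s\in\mathcal{E}^0_s(\lambda)$, and we obtain
\begin{equation*}
\lambda\int_{X_0}V|u|^2\vol_{\R^3}=\int_{X_0}\bigl(V^{-1}|P_su|^2+s^2V|u|^2\bigr)\vol_{\R^3}.
\end{equation*}
I would double-check the weighting in \eqref{eq:form}: from the pointwise formula times $\vol_g=V\,\eta\wedge\vol_{\R^3}$, the integrand should be $|P_su|^2+s^2V^2|u|^2$ with an overall $2\pi$. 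Either version works for the argument below.

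The second step is to rearrange using the exact weights:
\begin{equation*}
\int_{X_0}|P_su|^2\vol_{\R^3}+\int_{X_0}\bigl(s^2V^2-\lambda V\bigr)|u|^2\vol_{\R^3}=0 .
\end{equation*}
Writing $s^2V^2-\lambda V=V\bigl(s^2(V-1)+(s^2-\lambda)\bigr)$ and using $V=1+h>1$ on $X_0$, every term is non-negative when $\lambda\le s^2$. Hence $P_su=0$ and $\bigl(s^2h+(s^2-\lambda)\bigr)|u|^2=0$.

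The third step splits into cases.
\begin{itemize}
\item If $s\neq0$, then $s^2h>0$ on $X_0$, so $u=0$.
\item If $s=0$ and $\lambda<0$, then $u=0$ directly.
\item If $s=0$ and $\lambda=0$, then $P_0u=0$ means $u$ is a constant function on the connected set $X_0$. Since $\int_{X_0}V|u|^2<\infty$ and $V\ge1$, this constant must be zero.
\end{itemize}
In every case $f_s=0$.

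The only delicate point is justifying the global integration by parts, so that no boundary terms at infinity or at the centres appear. This is handled by Lemma \ref{lem:integrability.grav.inst} applied on $M$ itself: the fibrewise reduction is just a pointwise identity integrated over a full-measure set, so no cut-offs on $X_0$ are needed.
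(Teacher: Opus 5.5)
Your proof is correct and follows the paper's argument. You combine \eqref{eq:norm}--\eqref{eq:form} into $\int_{X_0}|P_su|^2\vol_{\R^3}+\int_{X_0}V(s^2V-\lambda)|u|^2\vol_{\R^3}=0$ and then make the same case split ($s\neq0$; $s=0,\lambda<0$; $s=0,\lambda=0$ with the constant killed by infinite volume); citing Lemma \ref{lem:integrability.grav.inst} to justify the global identity is welcome extra care. Delete your first displayed identity, though: its right-hand side $V^{-1}|P_su|^2+s^2V|u|^2$ omits the factor $V$ from $\vol_g$, and with those weights the case $\lambda=s^2$, $s\neq0$ would not follow from the potential term alone, so keep only the correctly weighted version you actually use.
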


\begin{proof} Let $f_s\in\mathcal{E}^0_s(\lambda)$ and let $u$ be the corresponding section of $L^s$ as in \eqref{eq:fs.u}.

Using \eqref{eq:norm}--\eqref{eq:form} in \eqref{eq:raw} gives
\begin{equation}\label{eq:positive-identity}
        \int_{X_0}|P_su|_{\R^3}^2\vol_{\R^3}
        +\int_{X_0}V(s^2V-\lambda)|u|_{\R^3}^2\vol_{\R^3}=0.
\end{equation}
Since $V>1$ by \eqref{eq:V} (recall that $m=1$) and $\lambda\leq s^2$ by assumption, the second integrand is non-negative.  Hence both terms in \eqref{eq:positive-identity} vanish.

If $s\neq 0$ then $s^2V-\lambda>s^2-\lambda\geq 0$, so $u=0$.  We also confirm that if $\lambda<0$ (which we already know is impossible by Lemma \ref{lem:integrability.grav.inst}) then $s^2V-\lambda>0$, which again forces $u=0$.

 The only remaining possibility is $s=0$ and $\lambda=0$.  Then \eqref{eq:positive-identity} says that $u$ is constant on $X_0$, but then $f_s$ is also constant (as $s=0$), which means that $f_s=0$ as $M$ has infinite volume and $f_s\in L^2$ by assumption.
 \end{proof}

 \begin{rmk}
It is this setting of non-positive energy where it appears crucial to have a  circle action on the gravitational instanton, since the argument is global. On Atiyah--Hitchin and its double cover the known $L^2$ eigenvalues \cite{Schroers3,GibbonsManton} correspond to negative energy in our notation.   
 \end{rmk}

 \subsection{Positive energy: setup}\label{ss:pos.set}

 Assume now that $f_s\in\mathcal{E}^0_{s}(\lambda)$ with
\begin{equation}
        E_{s,\lambda}=\lambda-s^2>0.
\end{equation}
 This situation is more challenging and requires us to build on existing analytic machinery.  Specifically, we extend parts of the theory for magnetic Schr\"odinger operators developed in \cite{AHK}.  The idea is to show that $f_s$ vanishes identically outside a compact set, but then unique continuation will imply that $f_s$ vanishes everywhere (as we saw in the proof of Proposition \ref{prop:noL^2.eigenforms.flat}).

\begin{rmk}
    As in the proof of Proposition \ref{prop:noL^2.eigenforms.flat}, one might be able to use methods from the scattering calculus in this setting to rule out positive eigenvalues, but we decide to take a more elementary approach as in \cite{AHK}.
\end{rmk}
 
For $R>0$ we let 
\begin{equation}\label{eq:DR}
    B_R=\{x\in\R^3\,:\,|x|<R\} \quad\text{and}\quad \mathcal{D}_R =\{x\in\R^3\,:\,|x|> R\}.
\end{equation}
Recall the points $x_0,\ldots,x_n\in\R^3$ in \eqref{eq:V} and choose $R_0>0$ sufficiently large that $x_j\in B_{R_0/2}$ for all $j$.  Then on $\mathcal{D}_{R_0}\subseteq X_0$ the line bundle $L^s$ (given in Definition \ref{dfn:Ps}) and its connection $\nabla_s$ are smooth.   

Let $u$ be the section of $L^s$ associated to $f_s$ as in \eqref{eq:fs.u}. Since $V\to1$ as $|x|\to \infty$ by \eqref{eq:V}, equation \eqref{eq:norm} implies 
\begin{equation} u\in L^2(\mathcal{D}_{R_0};L^s)
\end{equation}
(where here and in the following this means with respect to the Euclidean metric). The identity \eqref{eq:form} implies the magnetic energy of $u$ on $X_0$ (and hence $\mathcal{D}_{R_0}$) is finite:
\begin{equation}\label{eq:mag.finite}
    \int_{\mathcal{D}_{R_0}}(|P_su|^2_{\R^3}+s^2V^2|u|_{\R^3}^2)\vol_{\R^3}<\infty.
\end{equation}

We let $X$ be the radial vector field on $\R^3$, i.e.~for $r=|x|$ we have
\begin{equation}\label{eq:radialvf}
    X(x)=r\partial_r.
\end{equation}
This will play an important role in applying the theory of \cite{AHK}.

\subsection{Magnetic term}

For the theory of \cite{AHK} one needs control on the various terms appearing in our magnetic Schr\"odinger operator $H_{s,\lambda}$ in \eqref{eq:Hs}.  The first concerns the so-called \emph{magnetic field}, which is related to the curvature of the connection on $L^s$.

By \eqref{eq:localA} and \eqref{eq:mag.mom}, the curvature of the connection on $L^s$ is 
\begin{equation}
    -\ii s\dd\eta=\ii s*\dd V.
\end{equation}  We  need to analyse the asymptotic behaviour of $\iota_X(s\dd\eta)$ (the ``magnetic term''), where $X$ is as in \eqref{eq:radialvf}. All norms will be calculated with respect to the Euclidean metric on $\R^3$.  

Notice that
\begin{equation}
    |\iota_X\dd\eta|=|\iota_X*\dd V|=|x\wedge \d V|.
\end{equation}
Then by \eqref{eq:V} we have
\begin{equation}\label{eq:Bs}
             s\nabla V(x)
        =-s\sum_{j=0}^n \frac{x-x_j}{2|x-x_j|^3},
\end{equation}  
which can be interpreted as the ``magnetic field''.    
We deduce that
\begin{align}\label{eq:cross}
        sx\times \nabla V(x) 
        &=s\sum_{j=0}^n  \frac{x\times x_j}{2|x-x_j|^3}.
\end{align}
Since $x_j\in B_{R_0/2}$ for all $j$, we have $|x-x_j|\geq |x|/2$  on $\mathcal{D}_{R_0}$ for all $j$, and hence
\begin{equation}\label{eq:beta-zero}
    |\iota_X(s\dd\eta)|=    |sx\times \nabla V(x)|=O(|x|^{-2})\quad\text{as $|x|\to\infty$.}
\end{equation}
This is the  asymptotic decay on the magnetic term we require.

\subsection{Electric potential}

Another important ingredient (following \cite{AHK})  will be to control the asymptotic behaviour of the (electric) potential $W_{s,\lambda}$ in \eqref{eq:W}.

Recall $h$ in \eqref{eq:h}.   Since the centres $x_j$ remain in a fixed compact set, we   see that
\begin{equation}\label{eq:hdecay1}
        h(x)=O(|x|^{-1})\quad\text{as $|x|\to\infty$.}
\end{equation}
Moreover, recalling $X$ in \eqref{eq:radialvf}, 
\begin{equation}
      Xh(x)=  x\cdot\nabla h(x)
        =-\sum_{j=0}^n \frac{x\cdot(x-x_j)}{2|x-x_j|^3}
        =-h(x)+O(|x|^{-2})\quad\text{as $|x|\to\infty$.}
\end{equation}

Consequently, $W_{s,\lambda}$ in \eqref{eq:W} satisfies
\begin{equation}\label{eq:Wdecay}
        W_{s,\lambda}(x)=O(|x|^{-1})\quad\text{as $|x|\to\infty$,}
\end{equation}
and
\begin{align}\label{eq:XWdecay}
     XW_{s,\lambda}(x) 
        &=(2s^2-\lambda)x\cdot\nabla h(x)
          +2s^2h(x)\,x\cdot\nabla h(x)     =O(|x|^{-1})\quad\text{as $|x|\to\infty$.}
\end{align}
As we shall see, these are the required asymptotic conditions on the potential.

\subsection{Positive energy: vanishing theorem}

The key technical tool we will need to prove the non-existence of positive eigenvalues for \eqref{eq:reduced}, and hence complete the proof of Theorem \ref{thm:An.ALF}, is the following vanishing theorem on exterior domains as in \eqref{eq:DR}.

\begin{thm}\label{thm:An.vanish}
 Recall Definition \ref{dfn:Ps} and let $u\in L^2(\mathcal{D}_{R_0};L^s)$ be a solution to \eqref{eq:reduced} (i.e.~$H_{s,\lambda}u=E_{s,\lambda}u$) with finite  magnetic energy (i.e.~satisfying \eqref{eq:mag.finite}).  
If $E_{s,\lambda}>0$, then there exists $R_1>R_0$ such that
\begin{equation}
        u=0\quad\text{on }\mathcal{D}_{R_1}.
\end{equation}   
\end{thm}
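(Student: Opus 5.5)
We follow the strategy of \cite{AHK} for absence of positive eigenvalues of magnetic Schr\"odinger operators, adapted to the exterior domain $\mathcal{D}_{R_0}$ and the line bundle $L^s$. It has two stages. First we show that $u$ decays faster than any polynomial. Second, a Carleman-type exponential weight argument shows that a superpolynomially decaying solution must vanish outside some ball $B_{R_1}$. Throughout, $u$ is smooth on $\mathcal{D}_{R_0}$, since \eqref{eq:reduced} is elliptic with smooth coefficients there.

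The basic tool is the Mourre/virial commutator identity with the generator of dilations built from the radial vector field $X$ in \eqref{eq:radialvf}. Formally, for $A_D=\frac{1}{2}(X\cdot P_s+P_s\cdot X)$ (defined with $\nabla_s$), one computes
\[
\ii[H_{s,\lambda},A_D]=2P_s^*P_s-XW_{s,\lambda}+(\text{terms in }\iota_X(s\,\dd\eta)\text{ paired with }P_s).
\]
We will not apply this directly. Instead we use it with weights $F(r)$ and cut-offs $\chi_R$ supported in $\mathcal{D}_{R}$ with $R>R_0$, so that boundary contributions near $|x|=R$ are compactly supported errors controlled by local elliptic estimates. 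Finite magnetic energy \eqref{eq:mag.finite} and $u\in L^2$ justify every integration by parts at infinity.

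\textbf{Step 1: superpolynomial decay.} Apply the weighted virial identity to $\chi_R e^{F}u$ with $F=m\log\langle r\rangle$, $m$ increasing. The decay estimates \eqref{eq:beta-zero}, \eqref{eq:Wdecay} and \eqref{eq:XWdecay} show that the magnetic term and $W_{s,\lambda}$, $XW_{s,\lambda}$ are relatively small at infinity compared to $E_{s,\lambda}>0$. Concretely, $|\iota_X(s\,\dd\eta)|=O(r^{-2})$ pairs with $|P_su|$ as $O(r^{-1})\,|u|\,|P_su|$. The positivity of $E_{s,\lambda}$ then gives the inductive improvement $\langle r\rangle^{m}u\in L^2\Rightarrow\langle r\rangle^{m+1/2}u\in L^2$, with matching control of $\langle r\rangle^{m}P_su$, on $\mathcal{D}_{R}$ for $R$ large.

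One point needs care. The potential $W_{s,\lambda}$ decays only like $r^{-1}$, with $XW=O(r^{-1})$, which is a long-range Coulomb-type potential. The key is that it contributes $o(1)$ relative to $E_{s,\lambda}$, which is what \cite{AHK} requires. I would follow their conditions, namely $\iota_X B=o(1)$, $W=o(1)$ and $XW=o(1)$, and verify that our rates fit.

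\textbf{Step 2: exponential weights and vanishing.} Next use $F=\alpha\log r+\beta r$ and then let the exponential parameter grow, obtaining $e^{\beta r}u\in L^2(\mathcal{D}_R)$ for all $\beta$. A final Carleman-type inequality, with constants uniform in the weight for $R\ge R_1$, forces $\chi_{R_1}u=0$. This is the standard conclusion of the Froese--Herbst argument adapted in \cite{AHK}.

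The main obstacle is making the commutator estimate uniform in the growing weight while the magnetic potential is only defined up to gauge. We handle this by working with $\nabla_s$ and the gauge-invariant quantity $\iota_X(s\,\dd\eta)$ throughout, never with $A$. We also need to check that the $r^{-1}$ tails of $W_{s,\lambda}$ and the terms produced by differentiating the cut-off are absorbed once $R_1$ is chosen large depending on $s,\lambda$ and $E_{s,\lambda}$.
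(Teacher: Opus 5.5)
Your proposal follows essentially the same route as the paper. You adapt the Froese--Herbst argument of \cite{AHK} to $\mathcal{D}_{R_0}$ with a gauge-covariant dilation generator built from $\nabla_s$, which is equivalent to the paper's radial parallel-transport gauge and is how the non-triviality of $L^s$ at infinity is handled; you check that only $\iota_X(s\,\dd\eta)=O(r^{-2})$ and $W_{s,\lambda},XW_{s,\lambda}=O(r^{-1})$ enter, so the threshold in \cite{AHK} is $0$; and you derive superexponential decay and then vanishing from cut-off weighted identities.

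The differences are minor. Your polynomial-decay Step 1 is superfluous: the paper runs the contradiction argument at the critical rate $\mu_*$ directly from $u\in L^2$, and it is this contradiction, not an incremental increase of the exponential parameter, that gives $e^{\mu r}u\in L^2$ for every $\mu$. In the final step, the inner cut-off terms are not absorbed but are beaten as $\mu\to\infty$ by comparing $e^{2\mu(R_0+3)}$ with $e^{2\mu R_1}$. This also relies on the paper's small fix of the sign claim for $X^2g_{\mu,0,\sigma}$ in \cite[(4.26)]{AHK}, which you would otherwise inherit.
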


\begin{proof}
   This result is an analogue of \cite[Theorem 4.8]{AHK} and we follow its proof in \cite[Sections 3 and 4]{AHK}, recording any notable changes.  The proof is quite lengthy and so we break it up into several steps.

\paragraph{\bf{Step 1: radial gauge.}}  
In \cite{AHK} a global potential $A$ (as appears in \eqref{eq:localA} and \eqref{eq:mag.mom}) and Poincaré gauge (cf.~\cite[Section 2.2]{AHK}) for $A$ are used. The key difference here is that $L^s$ will typically be non-trivial over the sphere at infinity in $X_0$, so one cannot choose a global potential.  Radial parallel transport allows us to find the required substitute, which enables us to define a suitable gauge-fixing.

For $x\in\mathcal{D}_{R_0}$ and $t$ such that $e^t x\in\mathcal{D}_{R_0}$, let
\begin{equation}\label{eq:tau.t.x}
        \tau_{t,x}:L^s_{e^t x}\longrightarrow L^s_x
\end{equation}
be parallel transport along the radial segment from $e^tx$ to $x$ using the connection (locally given by $sA$) on $L^s$.   Since the connection is unitary, $\tau_{t,x}$ is unitary.   

We can now perform our required radial gauge-fixing.

\begin{lem} 
\label{lem:radial-gauge}
For any $R>R_0$, radial parallel transport identifies $L^s$ with the pullback of $L^s|_{S_R}$, where $S_R$ is the sphere of radius $R$ in $\R^3$.  With this identification, in every radially parallel local unitary frame with connection form $sA$, the radial vector field $X$ in \eqref{eq:radialvf} satisfies  
\begin{equation}\label{eq:radial-gauge}
        sA(X)=0.
\end{equation}
\end{lem}

\begin{proof}
The first statement about radial parallel transport identifying $L^s$ with $L^s|_{S_R}$ is clear.  Then, a local frame on $\mathcal{D}_{R_0}$ obtained by transporting a local frame on $S_R$ along radial segments is parallel in the radial direction. Thus the covariant derivative in the $X$ direction is ordinary differentiation, which is equivalent to \eqref{eq:radial-gauge}.  This construction is local 
on $S_R$ but intrinsic along each ray, so it patches even if $L^s|_{S_R}$ is topologically non-trivial.
\end{proof}

\paragraph{\bf Step 2: dilations.}  Recall the parallel transport maps in \eqref{eq:tau.t.x}.  
Define, initially on compactly supported sections $v$ of $L^s$ over $\mathcal{D}_{R_0}$,
\begin{equation}\label{eq:Ut}
        (U_tv)(x)=e^{3t/2}\tau_{t,x}v(e^t x).
\end{equation}
This extends to a unitary operator $U_t$ on $L^2$ whenever both $v$ and $U_tv$ are supported in $\mathcal{D}_{R_0}$.  We now describe how $U_t$ interacts with the magnetic momentum operator $P_s$.

\begin{lem}
\label{lem:dilation-covariance}
Let $v\in C_c^\infty(\mathcal{D}_{R_0};L^s)$ and recall the operator $P_s$ in \eqref{eq:mag.mom}. In radial gauge (as in Lemma \ref{lem:radial-gauge}),
\begin{equation}\label{eq:covariance}
        P_s U_tv=e^tU_t P_s v+U_tG^s_tv,
\end{equation}
where $G^s_t$ is a real $1$-form satisfying
\begin{equation}\label{eq:Gt-limit}
        \frac1tG_t^sv\longrightarrow s(\iota_X\dd\eta) v
        \quad\text{in }L^2\text{ as }t\to0.
\end{equation}
\end{lem}

\begin{proof}
We work in a radially parallel local unitary frame, so that \eqref{eq:mag.mom} holds with $sA(X)=0$ by Lemma \ref{lem:radial-gauge}.  In this frame $U_t$ is the usual scalar dilation operator.  The calculation leading to \cite[(3.11)]{AHK} gives \eqref{eq:covariance}, 
with $G^s_t$ measuring the difference between $sA$ and its rescaling under dilation by $t$, which is a globally defined real 1-form.  Moreover,  
\begin{equation}
    \lim_{t\to 0}\frac{G^s_t}{t}=s\mathcal{L}_XA.
\end{equation}
Using Cartan's formula and $sA(X)=0$ gives (recalling \eqref{eq:localA})
\begin{equation}
      s\Lie_XA=\iota_X(s\dd A)+\dd(sA(X))=s\iota_X\dd\eta.
\end{equation}     
This proves \eqref{eq:Gt-limit}. 
\end{proof}

\begin{rmk}\label{rmk:tildeB} Lemma \ref{lem:dilation-covariance} shows that one can modify the work in \cite{AHK}  
by replacing the term $\tilde{B}$ there (see e.g.~\cite[Proposition 3.6]{AHK}) with $s\iota_X\dd\eta$.
\end{rmk}

\paragraph{\bf Step 3: magnetic virial identity.} Recall $W_{s,\lambda}$ in \eqref{eq:W} and $H_{s,\lambda}$ in \eqref{eq:Hs}.  Let $\ii D$ be the generator of the dilation group \eqref{eq:Ut}, i.e.
\begin{equation}\label{eq:generator}
\ii D=\frac{dU_t}{dt}|_{t=0}.    
\end{equation}
For compactly supported  sections $v$ of $L^s$ over $\mathcal{D}_{R_0}$, we may differentiate the inner product $\ip{v}{U_t^{-1}H_{s,\lambda}U_tv}$ at $t=0$ and use Lemma \ref{lem:dilation-covariance} to obtain
\begin{equation}\label{eq:basic-virial}
        \ip{v}{\ii[H_{s,\lambda},D]v}
        =2\norm{P_sv}^2
         +2s\operatorname{Re}\ip{(\iota_X\dd\eta)v}{P_sv}
         -\ip{v}{(XW_{s,\lambda})v}.
\end{equation}
This identity appears in the ``magnetic virial theorem''  \cite[Theorem 3.8]{AHK}, with $\tilde{B}$ there replaced by $s\iota_X\dd\eta$ as expected by Remark \ref{rmk:tildeB}.  The replacement is justified by Lemma \ref{lem:dilation-covariance}; all other terms in the derivation are scalar and the argument for them uses only the Leibniz rule for the connection on $L^s$ and so does not see whether $L^s$ is  trivial over the sphere at infinity or not. 

\paragraph{\bf Step 4: weights.}  The next idea in \cite[Section 3]{AHK} is to multiply the sections of $L^s$ by suitable ``weights'' to deduce decay.  For these weights, let  $F:\mathcal{D}_{R_0}\to \R$ be a bounded smooth radial function satisfying the hypotheses of \cite[Lemma 3.16]{AHK}, and let $v$ be a compactly supported section of $L^s$ over $\mathcal{D}_{R_0}$.  Since
\begin{equation}\label{eq:vF}
        P_s (e^Fv)=e^FP_s v-\ii e^F(\dd F)v,
\end{equation}
the exponentially weighted virial identities in \cite[Section 3.4]{AHK}, which extend \eqref{eq:basic-virial}, apply verbatim to bundle-valued sections.  Metric compatibility of the connection on $L^s$ replaces ordinary integration by parts there.  The only ``magnetic term'' is again the one already identified in \eqref{eq:basic-virial} (cf.~Remark \ref{rmk:tildeB}).  

Thus \cite[Lemmas 3.16, 3.18 and 3.19]{AHK} hold on $\mathcal{D}_{R_0}$, with $\tilde{B}$ there replaced by $s\iota_X\dd\eta$, where cutoff functions are used to avoid the inner boundary on $\mathcal{D}_{R_0}$ and to truncate sections at large radius. We now provide some more detail on these cutoff arguments, which will also be useful for the remaining steps.

Let us first discuss the inner cutoff. Choose a smooth radial function $\chi_0:\R^3\to [0,1]$ such that
\begin{equation}
    \chi_0(x)=\left\{\begin{array}{cl} 0 & |x|\leq R_0+1,\\ 
    1 & |x|\geq R_0+2. \end{array}\right.
\end{equation}
 Recall $X$ in \eqref{eq:radialvf} and  $D$ in \eqref{eq:generator}.  Put
\begin{equation}\label{eq:cutoff.inner}
    \widetilde{X}=\chi_0 X,
\qquad
\mathcal{A}_0=\{x\in\R^3\,:\,R_0+1\leq |x|\leq R_0+2\}.
\end{equation}
Note the annular region $\mathcal{A}_0$ is compactly contained in $\mathcal{D}_{R_0}$. The flow of $\widetilde X$, lifted to $L^s$ using parallel transport as in \eqref{eq:Ut}, defines a unitary group on $L^2 \left(\mathcal{D}_{R_0}; L^s\right)$ with generator
\begin{equation}\label{eq:tilde.D}
    \ii\widetilde D
       =\nabla^s_{\widetilde X}
        +\frac12\operatorname{div}\widetilde X.
\end{equation}
 Since $\widetilde D = D$   on $\mathcal{D}_{R_0+2}$ and $\widetilde D = 0$ on $\mathcal{D}_{R_0} \setminus \mathcal{D}_{R_0+1}$, the terms on $\mathcal{D}_{R_0+2}$ arising from commutators with $\tilde{D}$ coincide with those obtained using $D$, while all additional terms in $\mathcal{D}_{R_0}\setminus\mathcal{D}_{R_0+2}$ are supported in $\mathcal{A}_0$.

 Therefore, there exists a constant $C_0>0$ such that, for any $L^2$ solution $u$ to \eqref{eq:reduced} in $\mathcal{D}_{R_0}$ and weight $F:\mathcal{D}_{R_0}\to\R$, 
 the contribution $\mathcal{R}_Fu$ of the terms supported in $\mathcal{A}_0$ arising from the inner cutoff satisfy
\begin{equation}
    |\mathcal R_Fu|
    \leq C_0\left(
       \norm{P_s(e^Fu)}_{L^2(\mathcal{A}_0)}^2
       +\norm{e^Fu}_{L^2(\mathcal{A}_0)}^2
    \right).
\end{equation}
Using \eqref{eq:mag.finite} and \eqref{eq:vF}, we obtain
\begin{equation} \label{eq:fixed-remainder}
    |\mathcal R_Fu| \leq C_0(u) \left(1+\|dF\|_{C^0(\mathcal{A}_0)}^2\right) e^{2\sup_{\mathcal{A}_0}\! F}
\end{equation}
for $C_0(u)>0$ which may now depend on $u$, but it is independent of $F$.

Now let us discuss the outer cutoff, needed to justify the identities at infinity. Given $T>R_0+2$ choose a smooth radial function $\chi_T:\R^3\to [0,1]$ such that
\begin{equation}
\chi_T(x)=\left\{\begin{array}{cl} 1 & |x|\leq T, \\
0 & |x|\geq 2T.\end{array}\right.
\end{equation}
If we then let
\begin{equation}\label{eq:outer.annulus}
    \mathcal{A}_T=\{x\in\R^3\,:\,T\leq|x|\leq 2T\},
\end{equation}
which is again compact, we have on $\mathcal{A}_T$ that
\begin{equation}\label{eq:T.est}
    |\dd\chi_T|=O(T^{-1}),\qquad
    |\nabla^2\chi_T|=O(T^{-2}),\qquad
    |X|=O(T).
\end{equation}
We can then perform a similar cutoff using $\chi_T$ to the radial vector field as in \eqref{eq:cutoff.inner} and obtain a modified generator to the dilation action as in \eqref{eq:tilde.D}.  This then yields error terms arising in the commutator identities supported in $\mathcal{A}_T$.

Therefore, for each fixed bounded weight $F:\mathcal{D}_{R_0}\to\R$ and $T>R_0+2$, there exists a constant $C_F(T)>0$ so that the error term $\mathcal{E}_{T,F}u$ supported in $\mathcal{A}_T$ for any $L^2$ solution $u$ to \eqref{eq:reduced} on $\mathcal{D}_{R_0}$ satisfies  
\begin{equation} 
    |\mathcal E_{T,F}u|
    \leq C_F(T)\left(
       \norm{P_s(e^Fu)}_{L^2(\mathcal{A}_T)}^2
       +\norm{e^Fu}_{L^2(\mathcal{A}_T)}^2
    \right).
\end{equation}
By \eqref{eq:T.est} the constant $C_F(T)$ (for $T$ fixed) tends to zero as $T\to\infty$, so we deduce from \eqref{eq:mag.finite} and the fact that $F$ is fixed that
 \begin{equation}\label{eq:escaping-remainder}
    |\mathcal E_{T,F}u|\to 0\quad\text{as $T\to\infty$.}
 \end{equation}

\paragraph{\bf Step 5: fast decay.}  Recall \eqref{eq:E} and \eqref{eq:Hs} and that $u$ satisfies $H_{s,\lambda}u=E_{s,\lambda}u$ with $E_{s,\lambda}>0$. Recalling \eqref{eq:DR} we let 
\begin{equation}\label{eq:mu*}
        \mu_*=\sup\{\mu\geq0: e^{\mu \sqrt{1+|x|^2}}u\in L^2(\mathcal{D}_R;L^s)\text{ for every }R>R_0\}.
\end{equation}
Consider the  weights (for $\mu>0$ and $\varepsilon>0$)
\begin{equation}
        F_{\mu,\varepsilon}(x)=\frac{\mu}{\varepsilon}\left(1-e^{-\varepsilon \sqrt{1+|x|^2}}\right).
\end{equation}
They satisfy the bounds
\begin{equation}\label{eq:Fbounds}
    0\leq F_{\mu,\varepsilon}(x) \leq \mu\sqrt{1+|x|^2},
\qquad
|dF_{\mu,\varepsilon}(x)|\leq\mu.
\end{equation}
Using these weights in the previous step together with the estimates \eqref{eq:beta-zero}, \eqref{eq:Wdecay} and \eqref{eq:XWdecay}, the proof of \cite[Proposition 4.1]{AHK} goes through to show that $\mu_*$ cannot be finite.  (Note that, in the notation of \cite[Proposition 4.1]{AHK}, we have $\Lambda=0$ in this case.)  

The idea is to suppose otherwise, i.e.~$\mu_*$ is finite. For the fixed radius $R=R_0+2$, one finds decreasing sequences $\mu_k\to \mu_*$  and $\varepsilon_k\to 0$ such that, if we let 
\begin{equation}
    u_k=e^{F_{\mu_k,\varepsilon_k}}u
\end{equation}
then $\|u_k\|_{L^2(\mathcal{D}_{R})}\to \infty$ as $k\to\infty$.  We set
\begin{equation}
    v_k=\frac{u_k}{\|u_k\|_{L^2(\mathcal{D}_{R })} }.
\end{equation}

For each $k$, we first let the outer cutoff radius $T$ tend to
infinity, so the escaping annulus remainder \eqref{eq:escaping-remainder} tends to zero. The bounds \eqref{eq:Fbounds}, together with the boundedness of $(\mu_k)$ and the compactness of $\mathcal{A}_0$ in \eqref{eq:cutoff.inner}, imply that the fixed-annulus remainder $\mathcal{R}_{\mu_k, \varepsilon_k}u_k$ from \eqref{eq:fixed-remainder} is uniformly bounded in \(k\):
\begin{equation}
|\mathcal R_{\mu_k,\varepsilon_k}u_k|
\leq C_0(u).
\end{equation}
Consequently,
\begin{equation}
\frac{|\mathcal R_{\mu_k,\varepsilon_k}u_k|}
{\|u_k\|_{L^2(D_R)}^2}
\leq
\frac{C_0(u)}
{\|u_k\|_{L^2(D_R)}^2}
\to0\quad \text{as $k\to\infty$.}
\end{equation}
Therefore, one finds (cf.~\cite[(4.16)]{AHK}) the following upper estimate: 
\begin{equation}\label{eq:bootstrap-upper}
        \limsup_k \ip{v_k}{\ii[H_{s,\lambda},\widetilde D]v_k}\leq0.
\end{equation}

However, one also proves (cf.~\cite[(4.17)]{AHK}) a lower estimate: 
\begin{equation}\label{eq:bootstrap-lower}
        \liminf_k \ip{v_k}{\ii[H_{s,\lambda},\widetilde D]v_k}
        \geq 2(E_{s,\lambda}+\mu_*^2)>0.
\end{equation}
(Note that the constants $\beta$, $\omega_1$, $\omega_2$ in \cite[(4.17)]{AHK} are all zero in our setting.) Equations \eqref{eq:bootstrap-upper}--\eqref{eq:bootstrap-lower} provide the required contradiction.  

The proof of the two estimates \eqref{eq:bootstrap-upper}--\eqref{eq:bootstrap-lower} uses only the weighted identities from the previous step, Cauchy--Schwarz, the IMS localisation formula, and  the asymptotic behaviour \eqref{eq:beta-zero}, \eqref{eq:Wdecay} and \eqref{eq:XWdecay}.  The IMS localisation formula is valid for any Hermitian connection on $L^s$ as it only uses the Leibniz rule, so does not involve the topology of $L^s$ over the sphere at infinity.

Since $\mu_*=+\infty$ in \eqref{eq:mu*}  we deduce, as in \cite{AHK}, that we have fast (specifically, superexponential) decay on the end:
\begin{equation}\label{eq:superexp}
        e^{\mu \sqrt{1+|x|^2}}u\in L^2(\mathcal{D}_{R};L^s)
        \quad\text{for every }\mu>0\text{ and all }R>R_0.
\end{equation}

\paragraph {\bf Step 6: conclusion.}
The final step in the proof, as in \cite[Theorem 4.8]{AHK}, is to multiply our eigensection $u$ by the weights (defined for $\mu,\varepsilon,\sigma>0$):
\begin{equation}\label{eq:F.mu.eps.sig}
        F_{\mu,\varepsilon,\sigma}(x)=\frac{\mu}{\varepsilon}
        \left(1-e^{-\varepsilon\sqrt{\sigma+|x|^2}}\right).
\end{equation}
Since \eqref{eq:superexp} holds, these
weights are admissible on the end. Recalling \eqref{eq:cutoff.inner},  for \(0<\sigma\leq1\), we get the bounds
\begin{equation}
0\leq F_{\mu,\varepsilon,\sigma}(x) \leq \mu\sqrt{\sigma+|x|^2}\leq\mu (R_0+3)
\quad\text{on }\mathcal{A}_0,
\qquad
|dF_{\mu,\varepsilon,\sigma}(x)|
\leq
\mu.
\end{equation} Using \eqref{eq:fixed-remainder},
the fixed-annulus remainder term, which we write as $\mathcal{R}_{\mu,\varepsilon,\sigma}u$, satisfies
\begin{equation}\label{eq:fixed.annulus.final}
|\mathcal R_{\mu,\varepsilon,\sigma}u| \leq C_0(u)(1+\mu)^2e^{2\mu (R_0+3)},
\end{equation}
where $C_0(u)$ is independent of $\mu$, $\varepsilon$, and $\sigma$.

For fixed $\mu,\varepsilon>0$ and $0<\sigma\leq1$, we insert
the outer cutoff, to generate errors supported in $\mathcal{A}_T$ in \eqref{eq:outer.annulus}.  Letting the radius $T$ tend to infinity,
the escaping-annulus remainders \eqref{eq:escaping-remainder} for the weights \eqref{eq:F.mu.eps.sig} tend to zero. The argument in the proof of
\cite[Theorem~4.8]{AHK} applies, with a minor correction
to the radial computation. Indeed, after letting $\varepsilon\downarrow0$, set
\(\nabla F_{\mu,0,\sigma}=g_{\mu,0,\sigma}x\). It is claimed in \cite[(4.26)]{AHK} that $X^2 g_{\mu, 0, \sigma}$ is negative. Actually, the computation gives
\begin{equation}
X^2g_{\mu,0,\sigma}
=
\frac{\mu |x|^2(|x|^2-2\sigma)}{(\sigma+|x|^2)^{5/2}},
\end{equation}
which need not be non-positive. However, for every \(\delta>0\) and $R>R_0+2$, 
uniformly in \(\mu>0\) and \(0<\sigma\leq 1\), we have
\begin{equation}
\bigl(X^2g_{\mu,0,\sigma}\bigr)_+
\leq
\delta|\nabla F_{\mu,0,\sigma}|^2+C_{\delta,R}
\qquad\text{on } \mathcal{D}_R.
\end{equation}
Choosing \(\delta\) sufficiently small, the gradient term can be
absorbed into the positive term in the weighted inequality, while the remaining constant is inconsequential. Finally, for $\mu$ fixed, we let
$\sigma\to0$, again using the superexponential decay \eqref{eq:superexp}.

The localised version of the calculation in
\cite[(4.24)--(4.28)]{AHK}, together with \eqref{eq:fixed.annulus.final}, therefore gives constants $\kappa,C>0$, independent of
$\mu$, such that, for any fixed radius $R>R_0$  chosen sufficiently large,
\begin{equation}\label{eq:final.est}
    \kappa\mu^2
       \norm{e^{\mu|x|}u}_{L^2(\mathcal D_R)}^2
    \leq
       C\norm{e^{\mu|x|}u}_{L^2(\mathcal D_R)}^2
       +C_0(u)(1+\mu)^2 e^{2\mu (R_0+3)}
\end{equation}
for every $\mu >0$.
Suppose now that $u$ does not vanish on any $\mathcal{D}_R$.  Choose
$R_1>R_0+3$ sufficiently large that \eqref{eq:final.est} holds.  First, we note that
\begin{equation}
    \norm{e^{\mu|x|}u}_{L^2(\mathcal D_{R_1})}^2
    \geq
    e^{2\mu R_1}\norm{u}_{L^2(\mathcal D_{R_1})}^2,
    \qquad
    \norm{u}_{L^2(\mathcal D_{R_1})}^2>0.
\end{equation}
Consequently, for every sufficiently large $\mu$,
\begin{equation}
    (\kappa\mu^2-C)
       \norm{u}_{L^2(\mathcal D_{R_1})}^2
    \leq
       C_0(u)(1+\mu)^2 e^{-2\mu(R_1-R_0-3)}.
\end{equation}
The right-hand side tends to zero as $\mu\to\infty$, whereas the
left-hand side is positive and grows quadratically.  This is a
contradiction.  Hence $u=0$ on $\mathcal D_{R_1}$ for some $R_1>R_0$,
which completes the proof of Theorem~\ref{thm:An.vanish}.
\end{proof}

\begin{rmk}
    As previously discussed, one may be able to obtain a dimensionally reduced equation at infinity (perhaps approximately) for eigenfunctions on some $D_n$ ALF gravitational instantons.  In these cases one may hope for a vanishing result as in Theorem \ref{thm:An.vanish} which can be used to deduce the non-existence of $L^2$ eigenfunctions corresponding to positive energy.  However, this still leaves open the possibility of infinitely many eigenvalues corresponding to negative energy, as is the case for Atiyah--Hitchin and its double cover. 
\end{rmk}

\subsection{Non-existence}

We may now prove Theorem \ref{thm:An.ALF} for eigenfunctions on $A_{n}$ ALF gravitational instantons.

\begin{proof}[Proof of Theorem \ref{thm:An.ALF}] 
To show that $\mathcal{E}^0(\lambda)=0$ for all $\lambda$ it suffices to show that $\mathcal{E}^0_s(\lambda)=0$ for all $s,\lambda$ by \eqref{eq:E0s}.  

Suppose $f_s\in\mathcal{E}^0_s(\lambda)$.  If $\lambda\leq s^2$ then $f_s=0$ by Lemma \ref{lem:non.pos.Esl}.  Suppose instead that $\lambda>s^2$. By our observations in Subsection \ref{ss:pos.set}, we may apply Theorem \ref{thm:An.vanish} to deduce that $f_s$ vanishes outside a compact subset of $M$.  However, $f_s$ satisfies a linear elliptic equation on $M$.  
By unique continuation, $f_s$ must vanish identically.
\end{proof}
 
\section{Eigenforms and special holonomy SU(3), \texorpdfstring{G\textsubscript{2}}{G2} and Spin(7)}\label{sec:special.hol}

In this section we make observations about non-existence of $L^2$ eigenforms in the presence of special holonomy in dimensions $6$, $7$ and $8$.  

As we have already noted (see Corollary \ref{cor:ALF}), in manifolds with special holonomy we can both have examples with $L^2$ eigenfunctions for the Laplacian and without them.  Moreover, if we have such eigenfunctions then we get $L^2$ eigenforms in all degrees (Proposition \ref{prop:functions.forms}), except in the Spin(7) setting (where we only omit 2- and 6-forms).  
As a consequence, we will state our vanishing results under the assumption of no $L^2$ eigenfunctions for the Laplacian.  

In addition, since the special holonomy manifolds we study are Ricci flat, the equality \eqref{eq:Delta.Rough} between the rough and Hodge Laplacians potentially suggests that $\mathcal{E}^1(\lambda)$ may vanish when $\mathcal{E}^0(\lambda)$ vanishes.  We are therefore also interested in settings where there are no $L^2$ eigen-1-forms.
 
\subsection{Decomposition of forms}

We recall that if the holonomy group of a Riemannian $n$-manifold $(M,g)$ is $G$ then we obtain decompositions of the spaces of forms 
\begin{equation}\label{eq:forms.hol.decomp}
    \Omega^k(M)=\sum_j\Omega^k_{p_j}(M),
\end{equation}
where $\sum_jp_j=\binom{n}{k}$ is equal to the total rank of the bundle of $k$-forms, and each $p_j$ denotes the dimension of an irreducible   representation of $G$.   The Hodge star gives isomorphisms $\Omega^k_{p_j}\cong \Omega^{n-k}_{p_j}$ so one needs only to consider $k\leq n/2$.  Moreover, the functions and 1-forms are irreducible, i.e.~do not decompose, so we shall focus on the splitting \eqref{eq:forms.hol.decomp} for $1<k\leq n/2$.

It is well-known that the Hodge Laplacian on $(M,g)$ preserves the decomposition \eqref{eq:forms.hol.decomp}, so we obtain corresponding splittings of the spaces of eigenforms:
\begin{equation}\label{eq:eigenforms.hol}
    \mathcal{E}^k(\lambda)=\sum_j\mathcal{E}^k_{p_j}(\lambda).
\end{equation}

This is somewhat analogous to the situation in 4 dimensions, with the splitting of 2-forms into self-dual and anti-self-dual spaces, and corresponding decompositions of the eigenforms \eqref{eq:E2+-}.  
It is therefore reasonable to expect similar methods to those employed in the hyperk\"ahler 4-manifold setting to extend to other special holonomy manifolds.  In particular, we shall use the following on several occasions, which we can view as an extension of Lemma \ref{lem:hk.asd.exact}.

\begin{lem}\label{lem:exact.vanish}
    Let $(M^n,g)$ be an AC or A$T^m$C Riemannian $n$-manifold with a bounded $(n-4)$-form $\eta$ satisfying $\d\eta=0$.  Let $\alpha\in L^2\Omega^1(M)$ such that $\d\alpha\in L^2\Omega^2(M)$ satisfies
    \begin{equation}\label{eq:ASD.shol}
        \d\alpha\wedge \eta=\mu *\d\alpha
    \end{equation}
    for some $\mu\neq 0$.  Then $\d\alpha=0$.
\end{lem}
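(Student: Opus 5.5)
The plan is to mimic the proof of Lemma \ref{lem:hk.asd.exact}: express $\|\d\alpha\|_{L^2}^2$ as the integral of an exact form and show that the boundary term at infinity vanishes. Wedge \eqref{eq:ASD.shol} with $\d\alpha$ to get
\begin{equation}
    \mu\|\d\alpha\|_{L^2}^2=\mu\int_M\d\alpha\wedge*\d\alpha=\int_M\d\alpha\wedge\d\alpha\wedge\eta .
\end{equation}
Since $\d\eta=0$, the integrand is exact:
\begin{equation}
    \d\alpha\wedge\d\alpha\wedge\eta=\d(\alpha\wedge\d\alpha\wedge\eta).
\end{equation}
If the integral of this exact $n$-form vanishes, then $\mu\neq0$ gives $\|\d\alpha\|_{L^2}=0$, so $\d\alpha=0$.

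To justify the vanishing, I would check the integrability hypotheses of Gaffney's Stokes theorem for complete manifolds \cite{Gaffney}, exactly as in Lemma \ref{lem:hk.asd.exact}.
\begin{itemize}
\item[(i)] Since $\eta$ is bounded and $\alpha,\d\alpha\in L^2$, Cauchy--Schwarz gives $|\alpha\wedge\d\alpha\wedge\eta|\leq C|\alpha||\d\alpha|$, so $\alpha\wedge\d\alpha\wedge\eta\in L^1$.
\item[(ii)] Likewise $|\d\alpha\wedge\d\alpha\wedge\eta|\leq C|\d\alpha|^2$, so its exterior derivative is in $L^1$.
\end{itemize}
Gaffney's result then says $\int_M\d(\alpha\wedge\d\alpha\wedge\eta)=0$. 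There is a mild regularity issue: $\alpha$ is only assumed $L^2$, not smooth. Gaffney's theorem needs $C^1$ forms, so I would either note that in the applications $\alpha$ is a smooth eigenform (Lemma \ref{lem:smooth}), or approximate by mollification locally.

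A more hands-on alternative, which uses the AC/A$T^m$C hypothesis directly, is to take the cut-off functions $f_k$ from the proof of Lemma \ref{lem:lambda.non.neg}, with $|\nabla f_k|\leq1$ and supports exhausting $M$. I would write
\begin{equation}
    \int_M f_k\,\d(\alpha\wedge\d\alpha\wedge\eta)=-\int_M\d f_k\wedge\alpha\wedge\d\alpha\wedge\eta .
\end{equation}
The right-hand side is bounded by $C\int_{U_{k+1}\setminus U_k}|\alpha||\d\alpha|$, which tends to $0$ as $k\to\infty$ since $|\alpha||\d\alpha|\in L^1$. The left-hand side tends to $\mu\|\d\alpha\|^2_{L^2}$ by dominated convergence.

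The main point to be careful about is this boundary term. The only delicate step is ensuring that boundedness of $\eta$, rather than decay, suffices; with the $L^1$ bound above this is immediate. I therefore expect no real obstacle beyond bookkeeping of the sign conventions in \eqref{eq:ASD.shol}, which do not matter since $\mu\neq0$ is arbitrary.
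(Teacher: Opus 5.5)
Your proposal is correct and matches the paper's proof: wedge \eqref{eq:ASD.shol} with $\d\alpha$, use $\d\eta=0$ to write $\d\alpha\wedge\d\alpha\wedge\eta=\d(\alpha\wedge\d\alpha\wedge\eta)$, and remove the boundary term with Gaffney's Stokes theorem, using boundedness of $\eta$ and $\alpha,\d\alpha\in L^2$. Your cut-off variant simply writes out Gaffney's argument explicitly (and works on any complete manifold), and your regularity caveat addresses a point the paper leaves implicit, since in its applications $\alpha$ is a smooth eigenform.
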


\begin{rmk}\label{rmk:vanish}
    We state Lemma \ref{lem:exact.vanish} in the setting of AC and A$T^m$C manifolds since these are our main interest, but as we shall see from the proof it clearly extends to other types of complete non-compact Riemannian manifolds (just like Lemma \ref{lem:hk.asd.exact}).  In the case of special holonomy, $\eta$ in Lemma \ref{lem:exact.vanish} will be a natural parallel form (e.g.~the K\"ahler form in the case of Calabi--Yau 3-folds).
\end{rmk} 

\begin{proof}
By \eqref{eq:ASD.shol} we have that
\begin{equation}\label{eq:mu.dalpha}
    \mu\|\d\alpha\|_{L^2}^2=\mu\int_M\d\alpha\wedge *\d\alpha=\int_M\d\alpha\wedge\d\alpha\wedge\eta=\int_M\d(\alpha\wedge\d\alpha\wedge\eta),
\end{equation}
since $\eta$ is closed.  As we are assuming that $\eta$ is bounded and $\alpha,\d\alpha\in L^2$, the same argument as in the proof of Lemma \ref{lem:hk.asd.exact} shows that the boundary term at infinity arising from applying Stokes' theorem as in \cite{Gaffney} to the final term in  \eqref{eq:mu.dalpha} vanishes.  Hence, 
\begin{equation}
    \mu\|\d\alpha\|_{L^2}^2=0
\end{equation}
which forces $\d\alpha=0$ as $\mu\neq 0$.
\end{proof}

\subsection{Calabi--Yau 3-folds}

We begin with Calabi--Yau 3-folds, which have holonomy in $\mathrm{SU}(3)$, and first recall the splitting of forms in this case.

\begin{lem}\label{lem:CY3.forms} Let $M$ be a Calabi--Yau $3$-fold with K\"ahler form $\omega$ and holomorphic volume form $\Omega$.  Then we have orthogonal decompositions:
\begin{align}
    \Omega^2(M)&=\Omega^2_1(M)\oplus\Omega^2_6(M)\oplus \Omega^2_8(M);\\
\Omega^3(M)&=\Omega^3_{1\oplus 1}(M)\oplus\Omega^3_6(M)\oplus\Omega^3_{12}(M),
\end{align}
where
\begin{align}
    \Omega^2_1(M)&=\{f\omega\,:\,f\in\Omega^0(M)\};\\
    \Omega^2_6(M)&=\{*(\alpha\wedge\mathrm{Re}\,\Omega)\,:\,\alpha\in\Omega^1(M)\}=\{\beta\in\Omega^2(M)\,:\,\beta\wedge\omega=*\beta\};\label{eq:Omega26}\\
    \Omega^2_8(M)&=\{\beta\in\Omega^2(M)\,:\,\beta\wedge\omega=-*\beta\};\label{eq:Omega28}\displaybreak[0]\\
    \Omega^3_{1\oplus 1}(M)&=\{f\mathrm{Re}\,\Omega\,:\,f\in\Omega^0(M)\}\oplus 
    \{f\mathrm{Im}\,\Omega\,:\,f\in\Omega^0(M)\};\\
    \Omega^3_6(M)&=\{\alpha\wedge\omega\,:\,\alpha\in\Omega^1(M)\};\\
    \Omega^3_{12}(M)&=\{\gamma\in\Omega^3(M)\,:\,\gamma\wedge\omega=0=\gamma\wedge\Omega\}.
\end{align}
\end{lem}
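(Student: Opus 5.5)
This is pointwise linear algebra, so I will work on the model space $\R^6=\mathbb{C}^3$ with $\omega=\sum_{j}\dd x_j\wedge\dd y_j$ and $\Omega=\dd z_1\wedge\dd z_2\wedge\dd z_3$. Any Calabi--Yau 3-fold has, at each point, an orthonormal coframe in which $\omega$ and $\Omega$ take this form, so the splittings transfer to $M$ fibrewise. They are smooth because $\omega$ and $\Omega$ are parallel, hence smooth. In the model, $\Lambda^2$ and $\Lambda^3$ decompose into $\mathrm{SU}(3)$-irreducibles, and the decomposition is orthogonal because $\mathrm{SU}(3)$ acts isometrically. The proof then amounts to identifying each irreducible summand with the explicit description given in the statement.

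For 2-forms I will start from $\Lambda^2\otimes\mathbb{C}=\Lambda^{2,0}\oplus\Lambda^{1,1}\oplus\Lambda^{0,2}$, where $\Lambda^{1,1}=\mathbb{C}\omega\oplus\Lambda^{1,1}_0$. On real forms this gives $[\![\Lambda^{2,0}]\!]$ of dimension 6, $\R\omega$ of dimension 1, and the primitive real $(1,1)$-forms of dimension 8. The three dimensions add up to 15, as they should. Next I will compute the operator $\beta\mapsto *(\beta\wedge\omega)$ on each summand, which I can do on representatives. For $\beta=\dd x_1\wedge\dd y_1-\dd x_2\wedge\dd y_2$, which is primitive of type $(1,1)$, a direct computation gives $\beta\wedge\omega=-*\beta$. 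For $\beta=\mathrm{Re}(\dd z_1\wedge\dd z_2)$ it gives $\beta\wedge\omega=*\beta$. For $\omega$ itself it gives $\omega\wedge\omega=2*\omega$.

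By Schur's lemma this operator acts as a scalar on each irreducible summand. So the eigenspaces of $*(\cdot\wedge\omega)$ with eigenvalues $2$, $1$ and $-1$ are exactly the three summands, which proves \eqref{eq:Omega26} and \eqref{eq:Omega28}. To obtain the description of $\Omega^2_6$ as $\{*(\alpha\wedge\mathrm{Re}\,\Omega)\}$, I note that the map $\alpha\mapsto*(\alpha\wedge\mathrm{Re}\,\Omega)$ is $\mathrm{SU}(3)$-equivariant from the irreducible 6-dimensional $\Lambda^1$ to $\Lambda^2$. It is nonzero, as I will check on $\alpha=\dd x_1$. Hence it is injective, and its image is a 6-dimensional irreducible submodule of $\Lambda^2$, which must be the unique such summand.

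For 3-forms the argument is analogous, using the complex splitting $\Lambda^3\otimes\mathbb{C}=\Lambda^{3,0}\oplus\Lambda^{2,1}\oplus\Lambda^{1,2}\oplus\Lambda^{0,3}$. Here $\Lambda^{3,0}\oplus\Lambda^{0,3}$ gives the real span of $\mathrm{Re}\,\Omega$ and $\mathrm{Im}\,\Omega$, which has dimension 2. The space $\Lambda^{2,1}$ splits as $\omega\wedge\Lambda^{0,1}\oplus\Lambda^{2,1}_0$. Taking real parts gives $\Lambda^1\wedge\omega$, of dimension 6, and the primitive part, of dimension 12, for a total of $2+6+12=20$. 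The map $\alpha\mapsto\alpha\wedge\omega$ is injective because the Lefschetz map $L:\Lambda^1\to\Lambda^3$ is injective, so $\Omega^3_6$ is as stated.

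It remains to characterise $\Omega^3_{12}$. Primitivity is exactly the condition $\gamma\wedge\omega=0$ in degree 3. Type $(2,1)+(1,2)$ is exactly the condition $\gamma\wedge\Omega=0$, since $\gamma\wedge\Omega$ only sees the $(0,3)$ component of $\gamma$. I also need to check that these two conditions exclude the other summands. The forms $\mathrm{Re}\,\Omega$ and $\mathrm{Im}\,\Omega$ are excluded by the second condition, since $\Omega\wedge\bar\Omega\neq0$. The forms $\alpha\wedge\omega$ with $\alpha\neq0$ are excluded by the first, since $\alpha\wedge\omega^2\neq0$.

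The main obstacle is bookkeeping rather than anything conceptual. The sign conventions in $\beta\wedge\omega=\pm*\beta$ depend on the orientation $\omega^3/6$, so the explicit representative computations above must be done carefully to get the signs right.
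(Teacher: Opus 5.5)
Your proof is correct. The paper gives no proof of this lemma and simply recalls it as the standard $\mathrm{SU}(3)$-decomposition of forms, so your pointwise argument (type decomposition, Lefschetz, and Schur's lemma checked on explicit representatives) is just the standard justification it relies on. Two cosmetic fixes: the $6$-dimensional summand of $\Lambda^2$ is of complex type, so Schur a priori gives only a complex scalar there, but your check $\beta\wedge\omega=*\beta$ on one nonzero representative (or the symmetry of $\beta\mapsto*(\beta\wedge\omega)$) still forces it to be $1$; and the Lefschetz piece of $\Lambda^{2,1}$ is $\omega\wedge\Lambda^{1,0}$, not $\omega\wedge\Lambda^{0,1}$.
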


We can now give our vanishing results for Calabi--Yau 3-folds.

\begin{lem}\label{lem:CY3.vanish} Let $M$ be an AC or A$T^m$C Calabi--Yau $3$-fold and recall the notation of \eqref{eq:eigenforms.hol} and Lemma \ref{lem:CY3.forms}. Suppose that $\lambda\neq 0$ such that $\mathcal{E}^0(\lambda)=0$. 
\begin{itemize}
    \item[(a)] If $\mathcal{E}^2_8(\lambda)=0$ then $\mathcal{E}^k(\lambda)=0$ for all $k$.
    \item[(b)] If $\mathcal{E}^1(\lambda)=0=\mathcal{E}^3_{12}(\lambda)$ then $\mathcal{E}^k(\lambda)=0$ for all $k$.
\end{itemize}
\end{lem}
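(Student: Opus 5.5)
By Hodge duality ($*$ commutes with $\Delta$ and $\mathcal{E}^k(\lambda)\cong\mathcal{E}^{6-k}(\lambda)$), it suffices to prove $\mathcal{E}^k(\lambda)=0$ for $k\le 3$. The plan is to use the splitting \eqref{eq:eigenforms.hol} from Lemma \ref{lem:CY3.forms}, identify each summand with eigenfunctions or eigen-1-forms where possible, and kill the remaining summands using $\d$ and $\d^*$.

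\textbf{Step 1: reduce the summands built from parallel forms.} Let $\varphi\in\{\omega,\mathrm{Re}\,\Omega,\mathrm{Im}\,\Omega\}$. Since $\varphi$ is parallel, the Weitzenböck formula gives $\Delta(f\varphi)=(\Delta f)\varphi$; the curvature term annihilates $\varphi$ because $\nabla^*\nabla\varphi=0=\Delta\varphi$. As $\varphi$ has constant norm, $\mathcal{E}^2_1(\lambda)\cong\mathcal{E}^0(\lambda)$ and $\mathcal{E}^3_{1\oplus1}(\lambda)\cong\mathcal{E}^0(\lambda)^2$, so both vanish.

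\textbf{Step 2: identify the rank-6 summands with 1-forms.} The maps $\alpha\mapsto *(\alpha\wedge\mathrm{Re}\,\Omega)$ and $\alpha\mapsto\alpha\wedge\omega$ are parallel bundle isomorphisms $\Lambda^1\to\Lambda^2_6$ and $\Lambda^1\to\Lambda^3_6$, and both have constant pointwise norm ratio. I claim they intertwine the Hodge Laplacians. On 1-forms $\Delta=\nabla^*\nabla$ because $g$ is Ricci-flat. On $\Lambda^2_6$ and $\Lambda^3_6$ the Weitzenböck curvature term vanishes: the curvature takes values in $\mathfrak{su}(3)\cong\Lambda^2_8$, and its action on these summands is the $\mathfrak{su}(3)$-equivariant transfer of the Ricci term on $\Lambda^1$, which is zero. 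Verifying this curvature step carefully is the main technical point. Granting it, $\mathcal{E}^2_6(\lambda)\cong\mathcal{E}^3_6(\lambda)\cong\mathcal{E}^1(\lambda)$. Throughout I will use Lemma \ref{lem:lambda.non.neg} to know that $\d\beta,\d^*\beta\in L^2$ for every $\beta\in\mathcal{E}^k(\lambda)$; since $\d$ and $\d^*$ commute with $\Delta$, these are again $\lambda$-eigenforms in $L^2$.

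\textbf{Step 3: part (b).} Assume $\mathcal{E}^1(\lambda)=0=\mathcal{E}^3_{12}(\lambda)$. By Steps 1 and 2, all summands of $\mathcal{E}^3(\lambda)$ vanish, so $\mathcal{E}^3(\lambda)=0$, and also $\mathcal{E}^2_1(\lambda)=\mathcal{E}^2_6(\lambda)=0$. Take $\beta\in\mathcal{E}^2_8(\lambda)$. Then $\d^*\beta\in\mathcal{E}^1(\lambda)=0$ and $\d\beta\in\mathcal{E}^3(\lambda)=0$. Hence $\Delta\beta=0$, and since $\lambda\ne0$ this forces $\beta=0$.

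\textbf{Step 4: part (a).} Assume $\mathcal{E}^2_8(\lambda)=0$, so together with Step 1, $\mathcal{E}^2(\lambda)=\mathcal{E}^2_6(\lambda)$. Take $\alpha\in\mathcal{E}^1(\lambda)$.
\begin{itemize}
\item First, $\d^*\alpha\in\mathcal{E}^0(\lambda)=0$.
\item Next, $\d\alpha\in\mathcal{E}^2(\lambda)$ lies in $\Omega^2_6$, so by \eqref{eq:Omega26} it satisfies $\d\alpha\wedge\omega=*\d\alpha$.
\item Lemma \ref{lem:exact.vanish}, applied with the bounded closed 2-form $\eta=\omega$ and $\mu=1$, gives $\d\alpha=0$.
\item So $\Delta\alpha=0$, hence $\alpha=0$, i.e.\ $\mathcal{E}^1(\lambda)=0$.
\end{itemize}
By Step 2 this gives $\mathcal{E}^2(\lambda)=0$ and $\mathcal{E}^3_6(\lambda)=0$. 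Finally, for $\gamma\in\mathcal{E}^3_{12}(\lambda)$ we have $\d\gamma\in\mathcal{E}^4(\lambda)=*\mathcal{E}^2(\lambda)=0$ and $\d^*\gamma\in\mathcal{E}^2(\lambda)=0$. Hence $\gamma$ is harmonic and so $\gamma=0$. This gives $\mathcal{E}^3(\lambda)=0$, and Hodge duality finishes the proof.
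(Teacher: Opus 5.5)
Your proposal is correct and follows essentially the same route as the paper. This includes the fact that the parallel isomorphisms $\Lambda^1\cong\Lambda^2_6\cong\Lambda^3_6$ intertwine $\Delta$, which the paper uses without comment and for which your holonomy-equivariance argument on the Weitzenb\"ock term is the standard justification. The only minor deviation is in (b): you kill $\beta\in\mathcal{E}^2_8(\lambda)$ using $\d^*\beta\in\mathcal{E}^1(\lambda)=0$, whereas the paper obtains $\d^*\beta=0$ from $\d\beta=0$ together with the type identity $*\beta=-\beta\wedge\omega$.
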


\begin{proof}
 Since $\mathcal{E}^0(\lambda)=0$ and $\Delta(f\omega)=(\Delta f)\omega$ for $f\in\Omega^0(M)$, we deduce that $\mathcal{E}^2_1(\lambda)=0$.  Similarly, $\mathcal{E}^3_{1\oplus 1}(\lambda)=0$. 

 We first show that if $\mathcal{E}^2_8(\lambda)=0$ then $\mathcal{E}^1(\lambda)=0$. 
 Suppose that $\alpha\in\mathcal{E}^1(\lambda)$. Then by Lemma \ref{lem:lambda.non.neg}, $\d^*\alpha\in\mathcal{E}^0(\lambda)=0$, so $\d^*\alpha=0$.  Moreover, $\d\alpha\in\mathcal{E}^2(\lambda)$.   If we are in case (a), i.e.~$\mathcal{E}^2_8(\lambda)=0$, then as $\mathcal{E}^2_1(\lambda)=0$ as well, we deduce that $\d\alpha\in\mathcal{E}^2_6(\lambda)$.  Using \eqref{eq:Omega26} and Lemma \ref{lem:exact.vanish} with $\eta=\omega$, we have that
  $\d\alpha=0$. Since $\d^*\alpha=0$ as well and $\lambda\neq 0$ we conclude that $\alpha=0$, so $\mathcal{E}^1(\lambda)=0$.  

 Suppose now that $\mathcal{E}^1(\lambda)=0$. 
 Then since $\omega$ and $\Omega$ are parallel we  deduce from Lemma \ref{lem:CY3.forms} that $\mathcal{E}^2_6(\lambda)=\mathcal{E}^3_6(\lambda)=0$.  Using the Hodge star, this only leaves us to consider $\beta\in\mathcal{E}^2_8(\lambda)$ and $\gamma\in \mathcal{E}^3_{12}(\lambda)$. 
 
 If $\mathcal{E}^3_{12}(\lambda)=0$ then we see that $\d\beta\in\mathcal{E}^3(\lambda)$, which we have shown is zero, so $\d\beta=0$.  Equation \eqref{eq:Omega28} then gives that
 \begin{equation}
     \d*\beta=-\d(\beta\wedge\omega)=0,
 \end{equation}
 and so we deduce that $\beta=0$ as $\lambda\neq 0$.  This proves (b).  
 
 If, on the other hand, $\mathcal{E}^2_8(\lambda)=0$ then $\d\gamma\in \mathcal{E}^4(\lambda)=0$ and $\d^*\gamma\in\mathcal{E}^2(\lambda)=0$.  This means $\gamma$ is harmonic which forces $\gamma=0$ as $\lambda\neq 0$. This completes the proof of (a).
\end{proof}

\begin{rmk}
    It is possible to show that if $\mathcal{E}^0(\lambda)=\mathcal{E}^1(\lambda)=0$ for $\lambda\neq 0$ on an  A$T^m$C Calabi--Yau 3-fold, then $\mathcal{E}^3_{12}(\lambda)\cong \mathcal{E}^2_8(\lambda)\oplus\mathcal{E}^2_8(\lambda)$,   by taking $(\beta_1,\beta_2)\in \mathcal{E}^2_8(\lambda)\oplus\mathcal{E}^2_8(\lambda)$ to $\d\beta_1+*\d\beta_2$.
\end{rmk}

\subsection{\texorpdfstring{G\textsubscript{2}}{G2} manifolds}

We now move on to $\text{G}_2$ holonomy and give the splitting of forms in this case.

 \begin{lem}\label{lem:G2.forms}
Let $(M^7,\varphi)$ be a $\text{G}_2$ manifold.  Then we have orthogonal decompositions:
\begin{align}
    \Omega^2(M)&=\Omega^2_7(M)\oplus \Omega^2_{14}(M);\\
\Omega^3(M)&=\Omega^3_{1}(M)\oplus\Omega^3_7(M)\oplus\Omega^3_{27}(M),
\end{align}
where
\begin{align}
    \Omega^2_7(M)&=\{*(\alpha\wedge*\varphi)\,:\,\alpha\in\Omega^1(M)\}=\{\beta\in\Omega^2(M)\,:\,\beta\wedge\varphi=2*\beta\};\label{eq:Omega27}\\
    \Omega^2_{14}(M)&=\{\beta\in\Omega^2(M)\,:\,\beta\wedge\varphi=-*\beta\};\label{eq:Omega214}\displaybreak[0]\\
    \Omega^3_{1}(M)&=\{f\varphi\,:\,f\in\Omega^0(M)\};\\
    \Omega^3_7(M)&=\{*(\alpha\wedge\varphi)\,:\,\alpha\in\Omega^1(M)\};\\
    \Omega^3_{27}(M)&=\{\gamma\in\Omega^3(M)\,:\,\gamma\wedge\varphi=0=\gamma\wedge*\varphi\}.
\end{align}
\end{lem}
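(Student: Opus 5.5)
This is a pointwise, linear-algebraic statement, so the plan is to prove it at a single point and then transport it everywhere using parallelism. Since $\varphi$ is parallel, at each point $p\in M$ we can choose an orthonormal coframe $e^1,\ldots,e^7$ in which
\begin{equation*}
\varphi_p=e^{123}+e^{145}+e^{167}+e^{246}-e^{257}-e^{347}-e^{356}.
\end{equation*}
The stabiliser of this 3-form is $\mathrm{G}_2$, and $\Lambda^kT_p^*M$ decomposes into $\mathrm{G}_2$-irreducibles by standard representation theory. Explicitly, $\Lambda^2=\R^7\oplus\mathfrak{g}_2$ and $\Lambda^3=\R\oplus\R^7\oplus S^2_0\R^7$, of dimensions $1+6\cdot 0+7+14=21$ and $1+7+27=35$ respectively. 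Orthogonality of the summands follows from Schur's lemma, since they are pairwise inequivalent irreducible representations and the metric is $\mathrm{G}_2$-invariant. Because $\varphi$ and $*\varphi$ are parallel, these pointwise splittings assemble into smooth subbundles, which gives the decomposition of $\Omega^k(M)$.

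The main work is identifying each summand with the explicit description in the statement. For $\Lambda^2$, consider the self-adjoint operator $T(\beta)=*(\beta\wedge\varphi)$. It is $\mathrm{G}_2$-equivariant, so by Schur it acts as a scalar on each of $\R^7$ and $\mathfrak{g}_2$. To find the scalars, I will compute $T$ on a single element such as $e^{23}$, using the map $\alpha\mapsto *(\alpha\wedge*\varphi)=\iota_{\alpha^\sharp}\varphi$, which is injective and lands in the $7$-dimensional summand. The trace gives a consistency check: $\operatorname{tr}T=0$ because $\beta\wedge\varphi\wedge *\beta$ pairs off to zero in the basis. With eigenvalues $2$ (multiplicity $7$) and $-1$ (multiplicity $14$) we get $7\cdot 2+14\cdot(-1)=0$, as required. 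This yields \eqref{eq:Omega27} and \eqref{eq:Omega214}.

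For $\Lambda^3$, the map $f\mapsto f\varphi$ gives the trivial summand. The map $\alpha\mapsto *(\alpha\wedge\varphi)$ is nonzero and equivariant, hence injective onto the unique $7$-dimensional summand. The remaining summand $\Lambda^3_{27}$ is the orthogonal complement of these two, so I must show it equals $\{\gamma:\gamma\wedge\varphi=0=\gamma\wedge*\varphi\}$. The maps $\gamma\mapsto\gamma\wedge*\varphi\in\Lambda^7\cong\R$ and $\gamma\mapsto\gamma\wedge\varphi\in\Lambda^6\cong\R^7$ are equivariant. By Schur, each kills $\Lambda^3_{27}$, and restricted to $\Lambda^3_1$ and $\Lambda^3_7$ respectively they are either zero or isomorphisms. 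It remains to check:
\begin{itemize}
\item $\varphi\wedge*\varphi=7\vol\neq 0$;
\item $*(\alpha\wedge\varphi)\wedge\varphi\neq 0$, for instance by evaluating at $\alpha=e^1$;
\item $\varphi\wedge\varphi=0$, which holds because $\varphi$ is odd degree.
\end{itemize}
Together these show the kernel of the two maps combined is exactly $\Lambda^3_{27}$.

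The expected obstacle is only bookkeeping: computing the constant $2$ in \eqref{eq:Omega27} with the correct sign convention for $\varphi$ and the orientation. I will fix the convention above and verify the value on $\iota_{e_1}\varphi=e^{23}+e^{45}+e^{67}$ directly.
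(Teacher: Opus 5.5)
The paper gives no proof of this lemma; it simply recalls it as standard $\mathrm{G}_2$ linear algebra. Your argument is the standard one and is correct: the pointwise normal form, Schur's lemma for orthogonality and for identifying the explicitly described subspaces, and the checks $\varphi\wedge*\varphi=7\vol$, $\varphi\wedge\varphi=0$ and $*(e^1\wedge\varphi)\wedge\varphi\neq0$. Your sign $*(\alpha\wedge*\varphi)=\iota_{\alpha^\sharp}\varphi$ is right, and with your convention $\beta=\iota_{e_1}\varphi$ gives $\beta\wedge\varphi=2(e^{12345}+e^{12367}+e^{14567})=2*\beta$, as needed.

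A few slips to tidy up:
\begin{itemize}
\item The dimension count for $\Lambda^2$ should just read $7+14=21$.
\item The reason $\operatorname{tr}T=0$ is that $\langle Te^{ij},e^{ij}\rangle\vol=e^{ij}\wedge e^{ij}\wedge\varphi=0$. The expression $\beta\wedge\varphi\wedge*\beta$ is a $10$-form, so it vanishes trivially and proves nothing.
\item Evaluating $T$ on $e^{23}$ alone gives no eigenvalue, because $e^{23}$ lies in neither summand. Use $\iota_{e_1}\varphi$, as you ultimately propose.
\item Parallelism of $\varphi$ is not needed here. The normal form and the smooth splitting hold for any $\mathrm{G}_2$-structure; parallelism is what the paper uses later to make $\Delta$ preserve the splitting.
\end{itemize}
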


We now have a vanishing result analogous to Lemma \ref{lem:CY3.vanish} in the $\text{G}_2$ setting.

\begin{lem}\label{lem:G2.vanish} Let $(M^7,\varphi)$ be an AC or A$T^m$C $\text{G}_2$ manifold,  and recall the notation of \eqref{eq:eigenforms.hol} and Lemma \ref{lem:G2.forms}. Suppose that $\lambda\neq 0$ such that $\mathcal{E}^0(\lambda)=0$. 
\begin{itemize}
    \item[(a)] If $\mathcal{E}^2_{14}(\lambda)=0$ then $\mathcal{E}^1(\lambda)=0$.
    \item[(b)] If $\mathcal{E}^1(\lambda)=0=\mathcal{E}^3_{27}(\lambda)$ then $\mathcal{E}^k(\lambda)=0$ for all $k$.
\end{itemize}
\end{lem}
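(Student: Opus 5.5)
The plan is to follow the proof of Lemma \ref{lem:CY3.vanish} closely, replacing the K\"ahler form $\omega$ by the parallel forms $\varphi$ and $*\varphi$. Before starting, record two consequences of $\mathcal{E}^0(\lambda)=0$ and of $\varphi,*\varphi$ being parallel with constant norm. First, $\Delta(f\varphi)=(\Delta f)\varphi$. Second, Lemma \ref{lem:G2.forms} identifies $\Omega^3_1(M)$ with functions. Together these give $\mathcal{E}^3_1(\lambda)=0$, and by the Hodge star also $\mathcal{E}^4_1(\lambda)=0$. Throughout, Lemma \ref{lem:lambda.non.neg} guarantees that $\d$ and $\d^*$ of an eigenform are again $L^2$ eigenforms with the same eigenvalue.

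For (a), take $\alpha\in\mathcal{E}^1(\lambda)$.
\begin{itemize}
\item Since $\d^*\alpha\in\mathcal{E}^0(\lambda)=0$, we have $\d^*\alpha=0$.
\item We have $\d\alpha\in\mathcal{E}^2(\lambda)=\mathcal{E}^2_7(\lambda)\oplus\mathcal{E}^2_{14}(\lambda)$. The hypothesis $\mathcal{E}^2_{14}(\lambda)=0$ forces $\d\alpha\in\mathcal{E}^2_7(\lambda)$.
\item By \eqref{eq:Omega27}, $\d\alpha\wedge\varphi=2*\d\alpha$.
\item Apply Lemma \ref{lem:exact.vanish} with $\eta=\varphi$, which is a bounded closed $3=(7-4)$-form, and $\mu=2$. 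This gives $\d\alpha=0$.
\end{itemize}
Hence $\Delta\alpha=0$, and since $\lambda\neq0$ we get $\alpha=0$.

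For (b), assume $\mathcal{E}^1(\lambda)=0$. The maps $\alpha\mapsto *(\alpha\wedge*\varphi)$ and $\alpha\mapsto*(\alpha\wedge\varphi)$ are parallel bundle isomorphisms of constant norm from $\Lambda^1$ onto $\Lambda^2_7$ and $\Lambda^3_7$. So they commute with $\Delta$, because $\Delta$ preserves the holonomy decomposition, and they preserve $L^2$. Therefore $\mathcal{E}^2_7(\lambda)=\mathcal{E}^3_7(\lambda)=0$. By hypothesis $\mathcal{E}^3_{27}(\lambda)=0$, so $\mathcal{E}^3(\lambda)=0$, and by the Hodge star $\mathcal{E}^4(\lambda)=0$.

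By the Hodge star it now remains only to treat $\beta\in\mathcal{E}^2_{14}(\lambda)$.
\begin{itemize}
\item $\d\beta\in\mathcal{E}^3(\lambda)=0$, so $\d\beta=0$.
\item By \eqref{eq:Omega214}, $*\beta=-\beta\wedge\varphi$. Since $\d\varphi=0$, this gives $\d*\beta=-\d\beta\wedge\varphi=0$, so $\d^*\beta=0$.
\item Thus $\beta$ is harmonic, and $\lambda\neq0$ forces $\beta=0$.
\end{itemize}
Hence $\mathcal{E}^2(\lambda)=0$. Finally $\mathcal{E}^k(\lambda)$ vanishes for $k=0,1,2,3$, so it vanishes for all $k$ by Hodge duality.

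The main point needing care is the claim that $\Delta$ intertwines with the isomorphisms $\Lambda^1\cong\Lambda^2_7\cong\Lambda^3_7$. This is standard: $\Delta=\nabla^*\nabla+\mathcal{R}$ on forms, the parallel isomorphism commutes with $\nabla^*\nabla$, and on a Ricci-flat $\mathrm{G}_2$ manifold the Weitzenb\"ock curvature term acts on $\Lambda^2_7$ and $\Lambda^3_7$ as the Ricci term on $\Lambda^1$, which is zero. Alternatively, one can invoke the general fact, used already in \eqref{eq:eigenforms.hol}, that $\Delta$ commutes with parallel $\mathrm{G}_2$-equivariant bundle maps.

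The other step to check is the applicability of Lemma \ref{lem:exact.vanish} in (a). This needs the sign convention in \eqref{eq:Omega27} to give a nonzero $\mu$, which it does.
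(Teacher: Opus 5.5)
Your proposal is correct and follows the paper's proof essentially step for step: $\mathcal{E}^3_1(\lambda)=0$, the parallel identifications $\mathcal{E}^1(\lambda)\cong\mathcal{E}^2_7(\lambda)\cong\mathcal{E}^3_7(\lambda)$, Lemma~\ref{lem:exact.vanish} with $\eta=\varphi$ for (a), and the argument $\d\beta=0$, $\d*\beta=-\d(\beta\wedge\varphi)=0$ on $\mathcal{E}^2_{14}(\lambda)$ for (b). You also justify, via the Weitzenb\"ock formula, that $\Delta$ commutes with the parallel $\mathrm{G}_2$-equivariant isomorphisms, a step the paper simply asserts.
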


\begin{proof}  The proof follows the same strategy as Lemma \ref{lem:CY3.vanish}.  The assumption $\mathcal{E}^0(\lambda)=0$ means that $\mathcal{E}^3_1(\lambda)=0$.  Note that $\mathcal{E}^1(\lambda)\cong\mathcal{E}^2_7(\lambda)\cong\mathcal{E}^3_7(\lambda)$ by Lemma \ref{lem:G2.forms} as $\varphi$ and $*\varphi$ are parallel.

Let $\alpha\in\mathcal{E}^1(\lambda)$.  Then $\d^*\alpha\in\mathcal{E}^0(\lambda)$ so $\d^*\alpha=0$.  If we assume that $\mathcal{E}^2_{14}(\lambda)=0$ then $\d\alpha\in\mathcal{E}^2_7(\lambda)$.  Using \eqref{eq:Omega27} and applying Lemma \ref{lem:exact.vanish} with $\eta=\varphi$ implies that $\d\alpha=0$.  
 Hence $\alpha=0$ as $\lambda\neq 0$, which proves (a).

If $\mathcal{E}^1(\lambda)=0$ then, by our earlier observation,  
$\mathcal{E}^2_7(\lambda)=\mathcal{E}^3_7(\lambda)=0$.  If we also assume that $\mathcal{E}^3_{27}(\lambda)=0$ then we are only concerned with $\beta\in\mathcal{E}^2_{14}(\lambda)$.  However, $\d\beta\in\mathcal{E}^3(\lambda)=0$ and \eqref{eq:Omega214} yields
\begin{equation}
\d*\beta=-\d(\beta\wedge\varphi)=0,
\end{equation}
so again we deduce that $\beta=0$ as $\lambda\neq 0$.  This completes the proof of (b).
\end{proof}

\subsection{Spin(7) manifolds}

We conclude our discussion by looking at 8-manifolds with holonomy contained in Spin(7), which is slightly different to the previous cases.

\begin{lem}\label{lem:Spin7.forms}
Let $(M^8,\Phi)$ be a $\text{\emph{Spin}}(7)$ manifold.  Then we have orthogonal decompositions:
\begin{align}
    \Omega^2(M)&=\Omega^2_7(M)\oplus \Omega^2_{21}(M);\\
\Omega^3(M)&=\Omega^3_8(M)\oplus\Omega^3_{48}(M);\\
\Omega^4(M)&=\Omega^4_1(M)\oplus\Omega^4_7(M)\oplus\Omega^4_{27}(M)\oplus\Omega^4_{35}(M);
\end{align}
where
\begin{align}
    \Omega^2_7(M)&=\{\beta\in\Omega^2(M)\,:\,\beta\wedge\Phi=3*\beta\};\label{eq:Omega27.spin}\\
    \Omega^2_{21}(M)&=\{\beta\in\Omega^2(M)\,:\,\beta\wedge\Phi=-*\beta\}\label{eq:Omega221};\displaybreak[0]\\
    \Omega^3_8(M)&=\{*(\alpha\wedge\Phi)\,:\,\alpha\in\Omega^1(M)\};\displaybreak[0]\\
      \Omega^4_{1}(M)&=\{f\Phi\,:\,f\in\Omega^0(M)\};\\
      \Omega^4_7(M)&\cong\Omega^2_7(M);\\
      \Omega^4_{35}(M)&=\{\eta\in\Omega^4(M)\,:\eta=-*\eta\}.
\end{align}
Note  that $\Omega^4_{27}(M)$ is the orthogonal complement of $\Omega^4_1(M)\oplus\Omega^4_7(M)$ in the space of self-dual $4$-forms on $M$:
\begin{equation}
    \Omega^4_+(M)=\Omega^4_1(M)\oplus\Omega^4_7(M)\oplus\Omega^4_{27}(M).
\end{equation}
\end{lem}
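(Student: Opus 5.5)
This is a pointwise linear-algebra statement about the $\mathrm{Spin}(7)$ representation on $\Lambda^\bullet(\R^8)^*$. Since the holonomy reduction gives a parallel $\Phi$, the frame bundle reduces to $\mathrm{Spin}(7)$. It therefore suffices to fix a point, identify $T_pM\cong\R^8$ with $\Phi_p$ the standard Cayley form $\Phi_0$, and decompose $\Lambda^k(\R^8)^*$ into $\mathrm{Spin}(7)$-irreducibles. The decompositions then globalise, with the subbundles preserved by $\nabla$. Orthogonality is automatic: the summands are mutually non-isomorphic irreducibles, except that any repeats are separated by explicit eigenvalues.

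For $\Lambda^2$, consider the symmetric operator $T(\beta)=*(\beta\wedge\Phi_0)$. It is $\mathrm{Spin}(7)$-equivariant, and $\Lambda^2=\mathfrak{so}(8)=\mathfrak{spin}(7)\oplus\R^7$ with the $7$-dimensional summand irreducible. Hence $T$ acts by scalars $a$ on $\Lambda^2_7$ and $b$ on $\Lambda^2_{21}$. Two trace identities fix them. The first is $\operatorname{tr}T=0$, which follows because $T$ is built by contracting with $\Phi_0$, and contracting $\Phi_0$ with the metric in a pair of indices gives zero. This yields $7a+21b=0$. The second is a computation of $\operatorname{tr}T^2$ using the standard contraction identity for $\Phi_0\cdot\Phi_0$; equivalently, one evaluates $T$ on a single explicit element such as $e^{12}+e^{34}$-type combinations from the Cayley form. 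This gives $a=3$, $b=-1$, recovering \eqref{eq:Omega27.spin} and \eqref{eq:Omega221}.

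For $\Lambda^3$, the map $\alpha\mapsto *(\alpha\wedge\Phi_0)$ is equivariant and injective; injectivity follows from Schur's lemma because $\R^8$ is irreducible and the map is nonzero. This gives $\Lambda^3_8$. Its orthogonal complement has dimension $56-8=48$, and one checks it is irreducible by citing the known branching $\Lambda^3\R^8=\Delta\oplus\Lambda^3_{48}$ for the spin representation.

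For $\Lambda^4$, we have $\Lambda^4=\Lambda^4_+\oplus\Lambda^4_-$ with $\dim 35$ each, and $\mathrm{Spin}(7)\subset\mathrm{SO}(8)$ preserves the orientation. The key step is the branching $\Lambda^4_-\cong S^2_0(\R^8)$-type, namely the irreducible $35$, and $\Lambda^4_+=1\oplus7\oplus27$, with $\Phi_0$ self-dual spanning the $1$. I would get $\Lambda^4_7$ as the image of $\Lambda^2_7$ under the equivariant map $\beta\mapsto \beta\cdot\Phi_0$, obtained by the $\mathfrak{so}(8)$-action of $\beta$ on $\Phi_0$. This map vanishes on $\mathfrak{spin}(7)=\Lambda^2_{21}$ because that is the stabiliser algebra, and it is injective on $\Lambda^2_7$ because $\mathrm{Spin}(7)$ is exactly the stabiliser. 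Its image lies in $\Lambda^4_+$ since $\mathfrak{so}(8)$ preserves self-duality. The remaining $27$ is defined as the complement. The main obstacle is verifying the irreducibility of $\Lambda^4_-$ and of the $27$ (and of the $48$). I would handle this by highest-weight or character computations for $\mathfrak{so}(7)$ acting via the spin representation, or simply cite Bryant's or Joyce's tables, as the paper treats these as standard. The orientation convention (which half contains $\Phi$) also needs a consistent sign check.
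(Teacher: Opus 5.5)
Your proposal is correct, and it is the standard pointwise $\mathrm{Spin}(7)$-representation-theoretic derivation. The paper gives no proof of this lemma and simply recalls it as standard, so citing the irreducibility of the $48$, $27$ and $35$ (e.g.\ from Bryant or Joyce) matches how the paper treats it.

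One small correction: $\operatorname{tr}T=0$ and $\operatorname{tr}T^2=84$ only give $a=-3b$ and $b=\pm1$. The signs $a=3$, $b=-1$ therefore come from your explicit evaluation, i.e.\ from the sign convention on $\Phi$, not from the two trace identities alone. For example, for Joyce's Cayley form $T$ acts by $3$ on $e^{12}+e^{34}+e^{56}+e^{78}$.
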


We can now state our vanishing result in the Spin(7) case.

\begin{lem}\label{lem:Spin7.vanish} Let $(M^8,\Phi)$ be an AC or  A$T^m$C $\text{\emph{Spin}}(7)$-manifold  and recall the notation of \eqref{eq:eigenforms.hol} and Lemma \ref{lem:Spin7.forms}. Suppose that $\lambda\neq 0$ such that $\mathcal{E}^0(\lambda)=0$. 
\begin{itemize}
    \item[(a)] If $\mathcal{E}^2_7(\lambda)=0$ or $\mathcal{E}^2_{21}(\lambda)=0$ then $\mathcal{E}^1(\lambda)=0$.
    \item[(b)] If $\mathcal{E}^1(\lambda)=0=\mathcal{E}^3_{48}(\lambda)$   then $\mathcal{E}^k(\lambda)=0$ for all $k$.
\end{itemize}
\end{lem}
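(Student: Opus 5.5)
We follow the strategy of Lemmas \ref{lem:CY3.vanish} and \ref{lem:G2.vanish}. Throughout, Lemma \ref{lem:lambda.non.neg} guarantees that for any $\alpha\in\mathcal{E}^k(\lambda)$ we have $\d\alpha,\d^*\alpha\in L^2$. Hence $\d\alpha\in\mathcal{E}^{k+1}(\lambda)$ and $\d^*\alpha\in\mathcal{E}^{k-1}(\lambda)$, since $\Delta$ commutes with $\d$ and $\d^*$. Since $\Phi$ is parallel, $\Delta(f\Phi)=(\Delta f)\Phi$, and $\mathcal{E}^0(\lambda)=0$ gives $\mathcal{E}^4_1(\lambda)=0$. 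Similarly, since $\Phi$ is parallel and the maps $\alpha\mapsto *(\alpha\wedge\Phi)$ and the identifications $\Omega^1\cong\Omega^3_8$, $\Omega^2_7\cong\Omega^4_7$ are parallel bundle isomorphisms commuting with $\Delta$, we get $\mathcal{E}^1(\lambda)\cong\mathcal{E}^3_8(\lambda)$ and $\mathcal{E}^2_7(\lambda)\cong\mathcal{E}^4_7(\lambda)$. Here we use that on a Ricci-flat manifold the Weitzenböck formula, together with holonomy invariance of the curvature term, gives commutation of $\Delta$ with parallel algebraic maps. This is the same principle as \eqref{eq:eigenforms.hol}.

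For (a), take $\alpha\in\mathcal{E}^1(\lambda)$. Then $\d^*\alpha\in\mathcal{E}^0(\lambda)=0$. The form $\d\alpha\in\mathcal{E}^2(\lambda)=\mathcal{E}^2_7(\lambda)\oplus\mathcal{E}^2_{21}(\lambda)$ therefore lies purely in one summand under either hypothesis. In either case it satisfies $\d\alpha\wedge\Phi=\mu*\d\alpha$ with $\mu=3$ or $\mu=-1$, by \eqref{eq:Omega27.spin} or \eqref{eq:Omega221}. We apply Lemma \ref{lem:exact.vanish} with the bounded closed $4$-form $\eta=\Phi$ (here $n-4=4$), which gives $\d\alpha=0$. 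Thus $\Delta\alpha=0$, and since $\lambda\neq0$ this forces $\alpha=0$. This step is where Spin(7) behaves better than the $\mathrm{G}_2$ case: both summands of $\Omega^2$ are eigenspaces of $\beta\mapsto *(\beta\wedge\Phi)$ with nonzero eigenvalue.

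For (b), assume $\mathcal{E}^1(\lambda)=0=\mathcal{E}^3_{48}(\lambda)$. Then $\mathcal{E}^3_8(\lambda)=0$, so $\mathcal{E}^3(\lambda)=0$, and by the Hodge star $\mathcal{E}^5(\lambda)=\mathcal{E}^7(\lambda)=0$; also $\mathcal{E}^8(\lambda)=0$. For $\beta\in\mathcal{E}^2(\lambda)$ we have $\d\beta\in\mathcal{E}^3(\lambda)=0$ and $\d^*\beta\in\mathcal{E}^1(\lambda)=0$, so $\beta$ is harmonic and hence $\beta=0$. This gives $\mathcal{E}^2(\lambda)=\mathcal{E}^6(\lambda)=0$. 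For $\gamma\in\mathcal{E}^4(\lambda)$ we have $\d\gamma\in\mathcal{E}^5(\lambda)=0$ and $\d^*\gamma\in\mathcal{E}^3(\lambda)=0$, so again $\gamma=0$.

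The only step needing care is the first paragraph's claim that $\mathcal{E}^1(\lambda)\cong\mathcal{E}^3_8(\lambda)$, i.e.\ that $\Delta(*(\alpha\wedge\Phi))=*(\Delta\alpha\wedge\Phi)$. We justify this by the general fact, already used in \eqref{eq:eigenforms.hol}, that $\Delta$ preserves holonomy-type decompositions and commutes with the parallel isomorphisms between isomorphic summands. We also need $L^2$ to be preserved, which holds because $|\Phi|$ is constant.
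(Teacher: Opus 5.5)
Your proof is correct and is essentially the paper's argument: in (a) you kill $\d^*\alpha$ using $\mathcal{E}^0(\lambda)=0$ and kill $\d\alpha$ via Lemma~\ref{lem:exact.vanish} with $\eta=\Phi$, and in (b) you use $\mathcal{E}^3_8(\lambda)\cong\mathcal{E}^1(\lambda)=0$ to get $\mathcal{E}^3(\lambda)=\mathcal{E}^5(\lambda)=0$, then show that elements of $\mathcal{E}^2(\lambda)$ and $\mathcal{E}^4(\lambda)$ are harmonic, exactly as the paper does. One minor correction to your side remark: in the $\mathrm{G}_2$ case both summands of $\Omega^2$ are also eigenspaces of $\beta\mapsto *(\beta\wedge\varphi)$ with nonzero eigenvalues ($2$ and $-1$), so the real difference is that there $\mathcal{E}^2_7(\lambda)\cong\mathcal{E}^1(\lambda)$ makes the alternative hypothesis vacuous, whereas for $\mathrm{Spin}(7)$ the bundle $\Lambda^2_7$ is not isomorphic to $\Lambda^1$.
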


\begin{proof}
We again follow the same proof strategy as Lemmas \ref{lem:CY3.vanish} and \ref{lem:G2.vanish} with slight differences.  Notice first that if $\mathcal{E}^0(\lambda)=0$, then $\mathcal{E}^4_1(\lambda)=0$ since $\Phi$ is parallel.

Suppose that $\alpha\in\mathcal{E}^1(\lambda)$.  As before, we have that $\d^*\alpha=0$ using $\mathcal{E}^0(\lambda)=0$.  If $\mathcal{E}^2_7(\lambda)=0$ or $\mathcal{E}^2_{21}(\lambda)=0$ then \eqref{eq:Omega27.spin}--\eqref{eq:Omega221} imply that we can apply Lemma \ref{lem:exact.vanish} with $\eta=\Phi$ to deduce that $\d\alpha=0$.  
Since $\alpha$ is harmonic and $\lambda\neq 0$ this implies $\alpha=0$, which gives (a).

For (b), if $\mathcal{E}^1(\lambda)=0$ then $\mathcal{E}^3_8(\lambda)=0$ by Lemma \ref{lem:Spin7.forms}.  Suppose $\mathcal{E}^3_{48}(\lambda)=0$.  Then $\mathcal{E}^3(\lambda)=0$.  Therefore, if $\beta\in\mathcal{E}^2(\lambda)$ then $\d\beta\in\mathcal{E}^3(\lambda)=0$, so $\d\beta=0$.  Similarly, $\d^*\beta\in\mathcal{E}^1(\lambda)=0$ so $\beta=0$ as $\lambda\neq 0$.  If $\eta\in\mathcal{E}^4(\lambda)$ then $\d^*\eta\in\mathcal{E}^3(\lambda)=0$ and $\d\eta\in\mathcal{E}^5(\lambda)$ which vanishes since $\mathcal{E}^3(\lambda)=0$.  We deduce that $\mathcal{E}^4(\lambda)=0$ which gives that $\mathcal{E}^k(\lambda)=0$ for all $k$ in this case.
\end{proof}

\begin{rmk}   By Remark \ref{rmk:vanish}, the vanishing results given in Lemmas \ref{lem:CY3.vanish}, \ref{lem:G2.vanish} and \ref{lem:Spin7.vanish}  extend to other asymptotic behaviours beyond AC and A$T^m$C.     
    It would be interesting to see if one can find examples of complete non-compact  manifolds with holonomy $\mathrm{SU}(3)$, $\mathrm{G}_2$ or $\mathrm{Spin}(7)$ which have no $L^2$ eigenfunctions or 1-forms, but still do have non-trivial $L^2$ eigenforms.  Proposition \ref{prop:AC.Ek} shows this cannot occur in the AC case.
\end{rmk}

\section{Physical interpretation}\label{sec:physics}

In this final section we give some brief physical interpretations of our results in the case of gravitational instantons and discuss physical predictions for some Calabi--Yau 3-folds and $\mathrm{G}_2$ manifolds. 

\subsection{String/\texorpdfstring{$\mathbf{M}$}{M}-theory on ALE gravitational instantons}  Recall that, as noted in Remark \ref{rmk:ALE}, any ALE gravitational instanton may be denoted $(M^4_{\Gamma_{ADE}},g)$ as it is asymptotic to $\R^4/\Gamma_{ADE}$ for an appropriate finite subgroup $\Gamma_{ADE}\subseteq\mathrm{SU}(2)$.

In physics terms, Corollary \ref{cor:ALE} shows that ALE hyperk\"ahler 4-manifolds  
 have no massive  $L^2$-normalisable modes. This means that their $L^2$ spectrum leads only to harmonic forms, which are known \cite{Hausel} to be given by anti-self-dual 
 2-forms spanning the compactly supported cohomology 
 \begin{equation}
 H^2_{\text{cs}}(M^4_{\Gamma_{ADE}},\mathbb{R)} \cong \mathbb{R}^{rk(ADE)}.
 \end{equation}
 This cohomology can be naturally viewed as the Cartan subalgebra $\mathfrak{h}_{ADE}(\mathbb{R}) \subset \mathfrak{g}_{ADE}(\mathbb{R})$ for the $ADE$ Lie algebra. In fact, the moduli space ${\cal{M}}_{ADE}$ of ALE hyperk\"ahler structures on $M^4_{\Gamma_{ADE}}$, including orbifolds, is given by three copies of $\mathfrak{h}_{ADE}(\mathbb{R})$, up to the action of the Weyl group:
\begin{equation}\label{eq:M.ADE}
    {\cal{M}}_{ADE} = \mathfrak{h}_{ADE}(\mathbb{R})^3/W_{ADE}.
\end{equation}
In particular, if we fix any hyperk\"ahler structure $\{\omega_1,\omega_2,\omega_3\}$ in ${\cal{M}}_{ADE}$, then any neighbouring hyperk\"ahler structure is given by deforming each of the $\omega_j$ by any of the $L^2$  harmonic 2-forms. Thus all of the moduli (i.e.~deformation parameters) are $L^2$-normalisable. 

 ALE gravitational instantons $(M^4_{\Gamma_{ADE}},g)$ have numerous applications and interpretations in superstring/M-theory where they appear as exact vacuum spacetime solutions of the theory. In particular, M-theory describes physics in 11-dimensional Lorentzian spacetimes 
 $(\hat{M}^{10,1}, \hat{g}^{10,1})$ and we have exact spacetime vacua of the form 
 \begin{equation}\label{eq:ALE.vacua}
 (\hat{M}^{10,1}, \hat{g}^{10,1})=(M^4_{\Gamma_{ADE}} \times \mathbb{R}^{6,1},  g^4 + \eta^{6,1}),
 \end{equation}
 with $g^4=g$ and $\eta^{6,1}$ the Minkowski metric. 
 
 It is well known that, at the level of zero-modes, the physics of M-theory on this spacetime is governed by M-theory in the bulk coupled to 7d super Yang--Mills theory with $ADE$ gauge group \cite{Hull, Acharya}. This is borne out by the fact that the moduli space of the latter theories is precisely \eqref{eq:M.ADE} and also requires adding non-perturbative states in the form of M2-branes wrapping the calibrated homology cycles of $(M^4_{\Gamma_{ADE}}, g)$.  Corollary \ref{cor:ALE} establishes rigorously that there are no massive Kaluza--Klein particles as they would arise from massive $L^2$ eigenforms.



In other words, the physics of the M-theory spacetime vacuum \eqref{eq:ALE.vacua} 
consists of 11-dimensional supergravity in the bulk coupled to a 7d supersymmetric $ADE$ gauge theory. There are no massive Kaluza--Klein states.

At the end of the day we prove that M-theory on $(M^4_{\Gamma_{ADE}}, g)$ geometrically engineers 7d super Yang--Mills theory with $ADE$ Lie algebra and nothing else.
 Similarly, if we instead consider Type IIB theory on $(M^4_{\Gamma_{ADE}}, g)$ we prove that the physics is exactly given by bulk Type IIB string theory coupled to the $ADE$ $(2,0)$-theories and nothing else.

\subsection{ALF gravitational instantons}

We can also consider physical questions for the other known gravitational instantons. 
Here we will restrict our attention to the ALF case, leaving the other cases for future investigations.

Physics makes the following predictions.
\begin{itemize}
\item For multi-Taub--NUT there are no $L^2$ eigenmodes of the scalar Laplacian since D0-branes cannot bind to D6-branes, but rather scatter off of them. We demonstrated that this is the case in Theorem \ref{thm:An.ALF}. 
\item For Atiyah--Hitchin and its double cover,  
D0-branes can now bind to O6-planes in the string theory picture, allowing for \emph{bound states} (which are interpreted as $L^2$ eigenfunctions/forms).  These expectations are borne out by Corollary \ref{cor:ALF}(b).
\end{itemize}

\subsection{Higher dimensions: Calabi--Yau 3-folds and \texorpdfstring{G\textsubscript{2}}{G2} manifolds}
 
Together, Theorem \ref{thm:grav.inst} and Corollaries \ref{cor:ALE} and \ref{cor:ALF} give us a good understanding of $L^2$ eigenforms on gravitational instantons, particularly in the ALE and ALF settings, but our results for complete non-compact Ricci-flat manifolds with special holonomy in higher dimensions are weaker.  
It is therefore interesting to ask: what can physics predict in these higher-dimensional settings?  

In higher dimensions, ALE naturally generalises to asymptotically conical (AC), and here we know that we have no massive (i.e.~non-zero eigenvalue) $L^2$ eigenforms  by Proposition \ref{prop:AC.Ek}, whether we have special holonomy or not.  However, physics predicts the existence of massive $L^2$ eigenforms as follows.

Suppose $(M, g(a))$ is an AC Calabi--Yau 3-fold or $\text{G}_2$ manifold  with a moduli space of Ricci-flat metrics labelled by parameters collectively denoted as $a$. Suppose further that along some subset in the moduli space (corresponding to certain values of $a$), $(M, g(a))$ develops codimension four  orbifold singularities which are localised on a compact codimension four submanifold, $S \subset M$.  Examples of such families of AC Calabi--Yau 3-folds are discussed in \cite{Acharya2}.  

In this situation, $S$ is the locus of an $ADE$ singularity.  For large enough $S$ we should have a copy of 7d super Yang--Mills theory on $S$ times a non-compact Minkowski spacetime.  Moreover, since $S$ is compact, there should be massive Kaluza--Klein particles.  However, Proposition \ref{prop:AC.Ek} precludes the existence of such particles, so some discussion of the physical interpretation is required. First note that there is no contradiction because $(M, g(a))$ is singular on this subspace of the moduli space. Furthermore, the physical interpretation as a Yang--Mills theory requires the inclusion of branes, which are non-perturbative states, not evident from our analysis of the Laplacian eigenforms. 

As a proof of principle, in the examples of \cite{Acharya2}, the singular Calabi--Yau 3-folds in question are discrete quotients by $\Gamma_{ADE}$ of the resolved conifold with the AC metric constructed in \cite{CandelasDeLaOssa}. Since Proposition \ref{prop:AC.Ek} asserts that this AC Calabi--Yau admits no massive $L^2$ eigenforms, neither does the quotient. However, there are massless branes at the singularity, so presumably the would-be modes in question arise from these non-perturbative states.

Similarly one can imagine AC $\text{G}_2$ manifolds which develop codimension six singularities along  circles as one varies the parameters $a$. In this case, the codimension six singularity is believed to support a five-dimensional conformal field theory, hence such theories compactified on a circle should also lead to massive Kaluza--Klein modes.  Again, this is a singular locus in the moduli space and branes are required for the physical interpretation, thereby avoiding any contradiction with
Proposition \ref{prop:AC.Ek}.

\bibliographystyle{plain}
\bibliography{L2eigenforms}

\bigskip

\begingroup
\small

\noindent
\textsc{Bobby S. Acharya}\\
The Abdus Salam International Centre for Theoretical Physics (ICTP),\\
Strada Costiera 11, I-34151 Trieste, Italy\\
\href{mailto:bacharya@ictp.it}{\texttt{bacharya@ictp.it}}

\medskip

\noindent
\textsc{Luc\'ia M. Cabrera}\\
Instituto Balseiro, Universidad Nacional de Cuyo and
Comisi\'on Nacional de Energ\'ia At\'omica,
Av.\ Bustillo km 9.5, R8402AGP San Carlos de Bariloche,
R\'io Negro, Argentina.\\
\href{mailto:lucia.cabrera@ib.edu.ar}
{\texttt{lucia.cabrera@ib.edu.ar}}

\medskip

\noindent
\textsc{Simone Corbo}\\
Scuola Internazionale Superiore di Studi Avanzati (SISSA),
Via Bonomea 265, 34136 Trieste, Italy and INFN, Sezione di Trieste.\\
\href{mailto:scorbo@sissa.it}{\texttt{scorbo@sissa.it}}

\medskip

\noindent
\textsc{Jason D. Lotay}\\
Mathematical Institute, University of Oxford,
Andrew Wiles Building, Radcliffe Observatory Quarter,
Woodstock Road, Oxford OX2 6GG, United Kingdom.\\
\href{mailto:jason.lotay@maths.ox.ac.uk}{\texttt{jason.lotay@maths.ox.ac.uk}}
\endgroup

\end{document}